\def\lb{\lambda}
\def\Hi{\mathcal{H}}
\def\de{\mathrm{d}}
\def\AA{\mathfrak A}
\def\R{\mathbb R}
\def\C{\mathbb C}
\def\vect{\operatorname{Vect}}
\def\div{\operatorname{div}}
\newtheorem{Theorem}{Theorem}[section]
\newtheorem{proposition}[Theorem]{Proposition}
\newtheorem{Remark}[Theorem]{Remark}
\newtheorem{Definition}[Theorem]{Definition}
\newtheorem{Lemma}[Theorem]{Lemma}
\newtheorem{Corollary}[Theorem]{Corollary}
\newtheorem*{theorem*}{Theorem} 
\theoremstyle{definition}
\newtheorem{exmp}{Example}[section]
\newcounter{author}
\renewcommand*\author[1]{%
  \stepcounter{author}%
  \ifnum\c@author=1
    \gdef\@author{#1}%
  \else
    \xdef\@author{\unexpanded\expandafter{\@author\and#1}}%
  \fi
  \csgdef{author@\the\c@author}{#1}}
\newcommand*\email[1]{%
  \csgdef{email@\the\c@author}{#1}}
\newcommand*\address[1]{%
  \csgdef{address@\the\c@author}{#1}}
  \xdef\author@count{\the\c@author}%
\newcommand*\print@authors{%
  \ifnum\c@author>\author@count
  \else
    \print@author{\the\c@author}%
    \advance\c@author by 1
    \expandafter\print@authors
  \fi}
\newcommand*\print@author[1]{%
  \par\medskip
  \begin{tabular}{@{}l@{}}%
    \textsc{Address of \csuse{author@#1}}\\
    \csuse{address@#1}\\
    \textit{E-mail address}:
    \csuse{email@#1}
  \end{tabular}}
\providecommand{\keywords}[1]
{
  {\small	
  \textbf{\textit{Keywords---}} #1}
}
\providecommand{\subjclass}[1]
{
 {\small	
  \textbf{\textit{Mathematics Subject Classification (2020)---}} #1}
}
\begin{document}
\title{Smooth fields of Hilbert spaces, Hilbert bundles and Riemannian Direct Images}

\date{\empty}

\maketitle
\vspace*{-1.3cm}
\begin{minipage}{.93\textwidth}
 \begin{center}
 {\large
     Fabián Belmonte\footnote[1]{\centering Universidad Católica del Norte. \texttt{fbelmonte@ucn.cl}}, 
     \hspace*{2pt} Harold Bustos\footnote[2]{\centering Universidad Austral de Chile. \texttt{harold.bustos@uach.cl}}
     }
 \end{center}
 \vspace{1pt}
\end{minipage}
\begin{center}
\vspace{3pt}
\emph{\large In honour of professor Marius M\u antoiu}  
\end{center}
\vspace{1pt}
\begin{abstract}
\fontsize{10pt}{12pt}\selectfont
Given a field of Hilbert spaces, there are two ways to endow it with a smooth structure: the geometrical notion of Hilbert bundle and the analytical notion of smooth field of Hilbert spaces. We study the relationship between these concepts. We apply our results in the following example: Let $M,N$ be Riemannian oriented manifolds, $\rho:M\to N$ be a submersion and $\pi:E\to M$ a finite-dimensional Hermitian vector bundle with a connection. Also, for each $\lb\in N$, let $M_\lb=\rho^{-1}(\lb)$ and fix a suitable measure $\mu_\lb$ on $M_\lb$. Denote by $\Hi(\lb)$ the Hilbert space of square integrable sections on the restricted Hermitian vector bundle $\pi^{-1}(M_\lb)\to M_\lb$. Does the field of Hilbert spaces $\{\Hi(\lb)\}_{\lb\in N}$ admit a smooth field of Hilbert space structure? or a Hilbert bundle structure?  In order to provide conditions to guarantee a positive answer for these questions, we develop an interesting formula to differentiate functions defined on $N$ as an integral over $M_\lb$.
  
\end{abstract}

\keywords{Smooth fields of Hilbert Spaces and operators, direct images, Hermitian bundles.}

\subjclass{46G05}


\section{Introduction.}

Let $p:H\to N$ be a field of Hilbert spaces, i.e. $p$ is a surjective map such that $\Hi(\lb):=p^{-1}(\lb)$ is a complex Hilbert space. We denote by $\langle\cdot,\cdot\rangle_\lb$ the corresponding inner product on $\Hi(\lb)$, which is antilinear in the second variable. A section is a map $\varphi:N\to H$ such that $(p\circ\varphi)(\lb)=\lb$, for all $\lb\in N$. We denote the set of all sections of such a field by $\Gamma$. For any pair of sections $\varphi,\psi\in\Gamma$, we set the function
$$
h(\varphi,\psi)(\lb)=\langle\varphi(\lb),\psi(\lb)\rangle_\lb.
$$
In order to obtain an interesting mathematical object, we should add further assumptions on a given field of Hilbert spaces. Let us approach this issue first from a geometrical framework recalling the canonical notion of a smooth Hilbert bundle. In what follows, we will also require the notion of Banach manifold, which is defined as the usual notion of manifold but allowing the charts take values in open sets of a Banach space (instead of some Euclidean space). For details, see subsection I.2.1 in \cite{LS} or Chapter III in \cite{SL}.   

\begin{Definition}\label{HB}
Let $p:H\to N$ be a field of Hilbert spaces. Assume that $H,N$ are Banach manifolds and $p$ is a smooth map. Also, let $\{U_i\}_{i\in I}$ be an open cover of $N$, $\{\Hi_i\}_{i\in I}$ be a family of Hilbert spaces, and for each $\lb\in U_i$, $T_i(\lb):\Hi(\lb)\mapsto \Hi_i$ be a unitary operator. The family $(U_i,\Hi_i,T_i)_{i\in I}$ is smooth Hilbert bundle structure on $p:H\to N$ if the map $\mathcal{T}_i:p^{-1}(U_i)\to U_i\times \Hi_i$ given by $\mathcal{T}_i(x)=\left(p(x),T_i(p(x))(x)\right)$ is a diffeomorphism, for every $i\in I$.

A (smooth) Hilbert bundle is a field of Hilbert spaces endowed with a Hilbert bundle structure. If $p:H\to N$ is a Hilbert bundle,  we denote by $\Gamma^\infty(N,H)$ the space of smooth sections $\varphi:N\to H$. 
\end{Definition}

We set the field of unitary operators $T_i(\lb)$ instead of the local trivialization maps $\mathcal T_i$ as customary in the literature, because this will become useful later. 

In the rest of this article we will assume that the manifold $N$ has finite dimension. We will denote by $C^\infty(N)$ the space of smooth complex valued functions on $N$ and by  $\operatorname{Vect}(N)$ the complexified space of smooth vector fields on $N$.

We obtain a more interesting and subtle concept when we also assume the existence of a connection over a Hilbert bundle.  

\begin{Definition}\label{HeB}
A Hilbert bundle with a connection is a Hilbert bundle $p:H\to N$ together with a map
$\nabla:\operatorname{Vect}(N)\times\Gamma^\infty(N,H)\to\Gamma^\infty(N,H)$ such that, for $X,Y\in\operatorname{Vect}(N)$, $a\in C^\infty(N)$ and $\varphi,\psi\in\Gamma^\infty(N,H)$:
\begin{enumerate}
\item[i)] $\nabla_{X+Y}=\nabla_X +\nabla_Y$, $\nabla_{a X}=a\nabla_X$, $\nabla_X(a\varphi)=X(a)\varphi+a\nabla_X(\varphi)$
\item[ii)] $h(\varphi,\psi)\in C^\infty(N)$ and $Xh(\varphi,\psi)=h(\nabla_X\varphi,\psi)+h(\varphi,\nabla_{\overline{X}}\psi)$

\end{enumerate}

\end{Definition}


Let $p:H\to N$ be a field of Hilbert spaces. If all Hilbert spaces $\Hi(\lb)$ admit an orthonormal basis with the same cardinality $J$, then one can endow $p:H\to N$ with an artificial Hilbert bundle structure. Indeed, fix an orthonormal basis $\{\varphi_j(\lb)\}_{j\in J}$ of $\Hi(\lb)$, for each $\lb\in N$. Then, computing the corresponding Fourier coefficients on each $\Hi(\lb)$ defines a unitary operator $T(\lb):\Hi(\lb)\to l^2(J)$. We can endow $H$ with a Banach manifold structure by taking the map $x\to (p(x),T(p(x))x)$ as a global chart; then, by definition, $p:H\to N$ becomes a Hilbert bundle.    



We aim to construct Hilbert bundles structures more intrinsically. However, let us first approach the problem of introducing a notion of smoothness for a field of Hilbert spaces in a more analytical manner. Consider the trivial bundle: $H=N\times \Hi$. In this case, a section is a function $\varphi:N\to\Hi$ and the space of smooth sections is $C^\infty(N,\Hi)$.
Heuristically, in this scenario, we know how to differentiate certain sections before endowing the space $H$ with a smooth structure. Generally, given a field of Hilbert spaces, we might require the existence of a space of sections where we have an apriori way to compute a sort of derivation. The approach of endowing the field of Hilbert spaces with a suitable space of sections was applied by Dixmier and Douady long ago \cite{D3} to solve the analogue problem in the topological framework, using the notion of continuous field of Hilbert spaces defined by Godement \cite{God}.  Surprisingly, the problem in the smooth category was not considered until a few years ago \cite{LS}, where L. Lempert and R. Sz\H{o}ke introduced the following notion of smooth field of Hilbert spaces.
\begin{Definition}\label{def}
Let $N$ be a finite-dimensional smooth manifold. A smooth structure on a field of Hilbert spaces $p:H\to N$ is given by specifying a set of sections $\Gamma^\infty$, closed under addition and under multiplication by elements of $C^\infty(N)$, and a map $\nabla:\operatorname{Vect}(N)\times\Gamma^\infty\to\Gamma^\infty$ such that, for $X,Y\in\operatorname{Vect}(N)$, $a\in C^\infty(N)$ and $\varphi,\psi\in\Gamma^\infty$:
\begin{enumerate}
\item[i)] $\nabla_{X+Y}=\nabla_X +\nabla_Y$, $\nabla_{a X}=a\nabla_X$, $\nabla_X(a\varphi)=X(a)\varphi+a\nabla_X(\varphi)$
\item[ii)] $h(\varphi,\psi)\in C^\infty(N)$ and $Xh(\varphi,\psi)=h(\nabla_X\varphi,\psi)+h(\varphi,\nabla_{\overline{X}}\psi)$
\item[iii)] $\Hi^\infty(\lb):=\{\varphi(\lb)\mid \varphi\in\Gamma^\infty\}$ is dense in $\Hi(\lb)$, for all $\lb\in N$.
\end{enumerate} 
We call a triple $(p:H\to N,\Gamma^\infty,\nabla)$ having these properties a smooth field of Hilbert spaces with connection $\nabla$.
\end{Definition}

Our first goal is as follows: given a smooth field of Hilbert spaces $(p:H\to N,\Gamma^\infty,\nabla)$, to find conditions as sharp as we can to guarantee the existence of a Hilbert bundle structure on $p:H\to N$ with connection $\tilde \nabla$ such that $\Gamma^\infty\subseteq\Gamma^\infty(N,H)$ and $\tilde\nabla\mid_{\Gamma^\infty}=\nabla$. L. Lempert and R. Sz\H{o}ke approached this problem, but assuming that the required trivialization is globally defined (i.e., defining $T(\lb)$ for every $\lb\in N$, instead of finding an open covering $\{U_i\}_{i\in I}$ and defining $T_i(\lb)$ for every $i\in I$ and $\lb\in U_i$, see Definition \ref{HB}) and that the connection is flat or projectively flat \cite[Th. 2.4.2]{LS}. We will approach that problem in Section \ref{section2}, without assuming these conditions. Supposing that the trivialization is only locally defined will lead to technical problems that were not considered in \cite{LS}, which we will address by studying the so-called transition maps.

In general, the idea of describing a geometrical object in terms of an analytic-algebraic object is important in many areas of modern mathematics, especially in Noncommutative Geometry \cite{GVF,Kh}. For instance, the Serre-Swan Theorem asserts that the category of (finite-dimensional) complex vector bundles on a compact Hausdorff space $X$ is equivalent to the category of finitely generated projective modules over $C(X)$ \cite{Sw, GVF, Kh}. The required functor maps a vector bundle into its corresponding space of (continuous) sections.
In a certain sense, our purpose is to describe vector bundles in terms of a test sections space $\Gamma^\infty$ forming a $C^\infty(N)$-module, much as in the Serre-Swan Theorem, but in the smooth framework, allowing Hilbert spaces as fibers, and requiring a connection defined on $\Gamma^\infty$. 

One of the advantages of the notion of a smooth field of Hilbert spaces is that it allows the construction, for each $n\in\mathbb N_0\cup\{\infty\}$, of the Fr\'echet spaces of $n$-times differential sections $\Gamma^n(N)$. Essentially,  $\Gamma^n(N)$ is obtained adding to $\Gamma^\infty$ all the sections that are locally uniform limit of some sequence $(\varphi_k)$ in $\Gamma^\infty$ such that $\nabla_{X_1}\nabla_{X_2}\cdots \nabla_{X_j}(\varphi_k)$ also converges locally uniformly, for any $j\leq n$ and $X_1,X_2,\cdots, X_j\in \vect{(N)}$
(see the beginning of subsection \ref{bsfHO} for details). Another basic property is that, for any open subset $U\subseteq N$, the restriction $\Gamma^\infty|_U$ with the connection given by $\nabla^{U}_X\left(\varphi|_U\right):=(\nabla_X\varphi)|_U$ defines a smooth structure on the restricted field of Hilbert spaces $p:H|_U\to U$; we will denote by $\Gamma^n(U)$ the corresponding space of $n$-times differential sections. Eventually, we will prove the following result. 

\begin{Theorem}\label{porpeqivcor}
Let $(p:H\to N,\Gamma^\infty,\nabla)$ be a smooth field of Hilbert spaces. Also, let $\{U_i\}_{i\in I}$ be an open cover of $N$ and $\{\Hi_i\}_{i\in I}$ be a family of Hilbert spaces, and assume that for each $\lb\in U_i$, there is a unitary operator $T_i(\lb):\Hi(\lb)\to\Hi_i$. The family $\{(U_i,\Hi_i,T_i)\}_{i\in I}$ defines a Hilbert bundle structure with connection $\tilde\nabla$ such that $\Gamma^\infty(U_i,H|_{U_i})=\Gamma^\infty(U_i)$ and $\tilde\nabla\mid_{\Gamma^\infty}=\nabla$ if and only if $T_i(\Gamma^\infty(U_i))=C^\infty(U_i,\Hi_i)$ for every $i\in I$. If any of these conditions hold, $\Gamma^\infty(N)=\Gamma^\infty(N,H)$. 

\end{Theorem}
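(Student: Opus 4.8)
The plan is to prove the two implications separately and then deduce the global identity, using throughout one elementary bridge between the geometric and analytic pictures: once a Hilbert bundle structure is present, $\mathcal{T}_i(x)=(p(x),T_i(p(x))x)$ is a diffeomorphism onto $U_i\times\Hi_i$, so a section $\varphi$ of $H|_{U_i}$ is smooth in the sense of Definition~\ref{HB} if and only if $\mathcal{T}_i\circ\varphi=(\mathrm{id}_{U_i},T_i\varphi)$ is smooth, that is, if and only if $T_i\varphi\in C^\infty(U_i,\Hi_i)$. Since each $T_i(\lb)$ is invertible, this says $\Gamma^\infty(U_i,H|_{U_i})=\{\varphi:T_i\varphi\in C^\infty(U_i,\Hi_i)\}$ and $T_i\big(\Gamma^\infty(U_i,H|_{U_i})\big)=C^\infty(U_i,\Hi_i)$. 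The forward implication is then essentially definitional: assuming the family defines a Hilbert bundle structure with $\Gamma^\infty(U_i,H|_{U_i})=\Gamma^\infty(U_i)$, the bridge identity immediately gives $T_i(\Gamma^\infty(U_i))=C^\infty(U_i,\Hi_i)$ for each $i$.

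The content is in the converse. Assuming $T_i(\Gamma^\infty(U_i))=C^\infty(U_i,\Hi_i)$ for all $i$, I would use the maps $\mathcal{T}_i$ as the charts of a Banach manifold atlas on $H$. The only nontrivial point is that the transition maps are diffeomorphisms: on $(U_i\cap U_j)\times\Hi_j$ one has $\mathcal{T}_i\circ\mathcal{T}_j^{-1}(\lb,v)=(\lb,g_{ij}(\lb)v)$ with $g_{ij}(\lb)=T_i(\lb)T_j(\lb)^{-1}$ unitary, and since this map is linear in $v$, its smoothness as a map of Banach manifolds is equivalent to the \emph{norm}-smoothness of $\lb\mapsto g_{ij}(\lb)\in B(\Hi_j,\Hi_i)$. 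I would first localise the hypothesis: using a partition of unity subordinate to $\{U_i\}$ together with the $C^\infty(N)$-module structure of $\Gamma^\infty$ and the restriction property of $\Gamma^n(\cdot)$, one upgrades $T_i(\Gamma^\infty(U_i))=C^\infty(U_i,\Hi_i)$ to $T_i(\Gamma^\infty(V))=C^\infty(V,\Hi_i)$ for every open $V\subseteq U_i$. Applied to $V=U_i\cap U_j$ this makes $\Phi:=T_iT_j^{-1}$ a $C^\infty(V)$-linear bijection $C^\infty(V,\Hi_j)\to C^\infty(V,\Hi_i)$, continuous for the Fréchet topologies by the closed graph theorem, pointwise limits being controlled because each $g_{ij}(\lb)$ is bounded.

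The decisive step, and the main obstacle, is a tensoriality lemma: a continuous $C^\infty(V)$-linear map between the smooth section spaces of trivial Hilbert bundles is necessarily multiplication by a norm-smooth operator field, whence $g_{ij}\in C^\infty(V,B(\Hi_j,\Hi_i))$. This is exactly the analysis of transition maps advertised in the introduction; the difficulty is to pass from the \emph{strong} smoothness that the hypothesis supplies for free—applying $\Phi$ to the constant section with value $v$ shows $\lb\mapsto g_{ij}(\lb)v$ is smooth for each fixed $v$—to genuine norm smoothness of $\lb\mapsto g_{ij}(\lb)$, for which the $C^\infty(V)$-linearity and the closed-graph continuity of $\Phi$ must both be exploited.

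Once $H$ is a Banach manifold with the $\mathcal{T}_i$ as diffeomorphisms, the remaining assertions follow formally. The bridge identity and the hypothesis give $\Gamma^\infty(U_i,H|_{U_i})=\{\varphi:T_i\varphi\in C^\infty(U_i,\Hi_i)\}=\Gamma^\infty(U_i)$. For the connection I would simply transport $\nabla$: on $\Gamma^\infty(U_i)$, which is invariant under $\nabla_X$ by construction of $\Gamma^n(\cdot)$, set $\tilde\nabla_X:=\nabla_X$; because $\nabla$ is globally defined and commutes with restriction, these local definitions agree on overlaps, so $\tilde\nabla$ is a well-defined map on $\Gamma^\infty(N,H)$ extending $\nabla|_{\Gamma^\infty}$, and properties i)--ii) of Definition~\ref{HeB} are inherited from Definition~\ref{def} by continuity in the Fréchet topology of $\Gamma^\infty(U_i)$. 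Finally, since both smoothness notions are local, $\varphi\in\Gamma^\infty(N,H)$ if and only if $\varphi|_{U_i}\in\Gamma^\infty(U_i,H|_{U_i})=\Gamma^\infty(U_i)$ for all $i$, which—gluing once more with a partition of unity and the sheaf property of $\Gamma^n(\cdot)$—is equivalent to $\varphi\in\Gamma^\infty(N)$, yielding $\Gamma^\infty(N)=\Gamma^\infty(N,H)$.
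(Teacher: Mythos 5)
Your overall architecture---the bridge identity $\Gamma^\infty(U_i,H|_{U_i})=\{\varphi: T_i\varphi\in C^\infty(U_i,\Hi_i)\}$, the definitional forward implication, the transport of $\nabla$ to define $\tilde\nabla$, and the partition-of-unity gluing that yields $\Gamma^\infty(N)=\Gamma^\infty(N,H)$---matches the paper's proof, and those parts are sound. The genuine gap is in the converse direction, at precisely the point you yourself flag as ``the decisive step'': you reduce the existence of the Banach manifold structure to a tensoriality lemma asserting that a continuous $C^\infty(V)$-linear bijection between smooth section spaces of trivial bundles is multiplication by a \emph{norm}-smooth operator field, i.e.\ you must upgrade the strong smoothness of $\lb\mapsto g_{ij}(\lb)$ (which the hypothesis gives for free by applying $\Phi$ to constant sections) to norm smoothness---and you never prove this. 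Naming the hypotheses that ``must be exploited'' is not an argument; as written, the central implication of the theorem rests on an unproven lemma.

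Moreover, this detour through norm smoothness is unnecessary. Definition \ref{HB} only requires the maps $\mathcal T_i$ to be diffeomorphisms, and Proposition \ref{bundle} (already established in the paper, before the theorem) shows that this holds if and only if each transition map $\tau_{ij}$ is \emph{strongly} smooth, i.e.\ $\lb\mapsto\tau_{ij}(\lb)x$ is smooth for each fixed $x\in\Hi_i$. Combined with the localization step you already perform (passing from $T_i(\Gamma^\infty(U_i))=C^\infty(U_i,\Hi_i)$ to the analogous statement on open subsets, via bump functions and the $C^\infty(N)$-module structure), strong smoothness is immediate: for $x\in\Hi_i$ the constant section lies in $C^\infty(U_i\cap U_j,\Hi_i)$, hence $T_i^*x\in\Gamma^\infty(U_i\cap U_j)$ and $\tau_{ij}x=T_j(T_i^*x)\in C^\infty(U_i\cap U_j,\Hi_j)$. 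This is exactly how the paper argues; the connection is then obtained, much as you propose, by transporting $\nabla$ (the paper writes it as $\tilde\nabla_X=T_i^*(X+\tilde A_i(X))T_i$, which coincides with $\nabla_X$ on $\Gamma^\infty(U_i)$, and verifies the hypotheses of the preceding proposition on the fields $\tilde A_i(X)$). Incidentally, the strong-to-norm upgrade you need is in fact true---Remark \ref{NL} records that a referee supplied such a proof---but the paper deliberately avoids relying on it, and your proposal cannot rely on it either unless you actually supply the argument.
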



Most of the techniques that we developed to approach our problems, including finding conditions to apply the previous Theorem, will require to extend the notion of smooth field of operators introduced in \cite{BBC}, which we briefly recall here. Let $(p_1:H_1\to N,\Gamma^\infty_1,\nabla^1)$ and $(p_2:H_2\to N,\Gamma^\infty_2,\nabla^2)$ be smooth fields of Hilbert spaces, and also let $A=\{A(\lb)\}_{\lb\in N}$ be a field of operators such that $\Hi_1^\infty(\lb)\subseteq D[A(\lb)]$ and $\Hi_2^\infty(\lb)\subseteq D[A^*(\lb)]$, where $D[A(\lb)]$ and $D[A^*(\lb)]$ are the domains of the operators $A(\lb)$ and $A^*(\lb)$ respectively, for every $\lb\in N$. We say that $A$ is a smooth field of operators if $A(\Gamma_1^\infty)\subseteq \Gamma_2^\infty(N)$  and $A^*(\Gamma_2^\infty)\subseteq \Gamma_1^\infty(N)$, where $(A\varphi)(\lb):=A(\lb)\varphi(\lb)$ and $(A^*\psi)(\lb)=A^*(\lb)\psi(\lb)$ for every $\varphi\in\Gamma_1^\infty$ and $\psi\in\Gamma_2^\infty$. We summarize details and some of the main general features of smooth fields of operators in subsection \ref{bsfHO}. For instance (Proposition \ref{Con}), it turns out that the connections on the underlying fields of Hilbert spaces induce a well-defined connection $\hat\nabla$ on the space of smooth fields of operators, given by
$$
\hat\nabla_X(A)=\nabla_X^2 A-A\nabla_X^1\,,\qquad X\in\vect(N).
$$
In Section \ref{section2}, we approach our problem in two steps: first, looking for conditions to guarantee the existence of a suitable Hilbert bundle structure, and second, looking for (stronger) conditions to guarantee the existence of a suitable connection (subsection \ref{Hb}).
Let $N$ be a manifold and $p:H\to N$ be a field of Hilbert spaces. Assume that there are given an open cover $\{U_i\}_{i\in I}$ of $N$, a family of Hilbert spaces  $\{\Hi_i\}_{i\in I}$, and for each $\lb\in U_i$, a unitary operator $T_i(\lb):\Hi(\lb)\to\Hi_i$. The existence of a suitable Hilbert bundle structure is usually reformulated in terms of the so called transition maps $\tau_{i,j}(\lb):=T_j(\lb)T^*_i(\lb)$, $\lb\in U_i\cap U_j$. We will show in Proposition \ref{bundle} that, in order to obtain a Hilbert bundle structure, it is enough to require that the map $\lb\mapsto \tau_{ij}(\lb)$ be strongly smooth, i.e the map $\lb\mapsto \tau_{ij}(\lb)x$ belongs to $C^\infty(U_i\cap U_j,\Hi_j)$, for every $x\in\Hi_i$.

We will find a condition equivalent to strong smoothness assuming that each $\tau_{ij}$ defines a smooth field of operators on a dense smooth submodule of $C^\infty(U_i\cap U_j,\Hi_i)$. More precisely, when the fiber spaces are constants (i.e. $\Hi_1(\lb)=\Hi_1$ and $\Hi_2(\lb)=\Hi_2$),  the connection $\hat{\nabla}$  will allow us to characterize strong smoothness in terms of the consecutive derivatives of the field of operators under consideration (see Proposition \ref{Der}), and we apply that characterization in Corollary \ref{Xtau} over the field of operators $\tau_{ij}$. 

Instead of dealing with the field of operators $\tau_{ij}$, it seems more practical to directly analyze the field of operators $T_i$. In Proposition \ref{SF-HB}, applying our smooth field of operators notion and $\hat\nabla$, we will provide natural conditions over each $T_i$ to ensure the existence of a Hilbert bundle structure on the underlying field of Hilbert spaces. 

In order to provide conditions to guarantee the existence of a Hilbert bundle structure with a connection, under suitable assumptions, it is useful to introduce the fields of operators $\tilde A(X):=-\hat\nabla_X(T_i)T_i^*$, for every $X\in\vect{(U_i)}$. Essentially, we will prove that the family $(U_i,\Hi_i,T_i)$ allows to construct a Hilbert bundle structure with a suitable connection if and only if each $\tilde A_i(X)$ maps $C^\infty(U_i,\Hi_i)$ into itself. As a consequence, we will obtain Theorem \ref{porpeqivcor} and Corollary \ref{corisoc}, which will be applied later to show that the examples given in Section \ref{RDI} define Hilbert bundles with a connection.   

One of the main outcomes of this article is the introduction of a proper definition of trivialization for a smooth field of Hilbert spaces. Actually, we will provide two different notions: the notion of weak smooth local trivialization is meant to imply the existence of a Hilbert bundle structure (see Definition \ref{wLT}), and it comes from Proposition \ref{SF-HB}; the notion of smooth local trivialization follows from requiring the conditions in Theorem \ref{porpeqivcor}, and it is obviously meant to imply the existence of Hilbert bundle structure with a connection (see Definition \ref{LT}). A notion of trivialization was also introduced in \cite{LS}, but only considering the global and (projectively) flat case. 

At the end of Section \ref{section2}, we will provide an abstract result concerning the construction of a smooth trivialization (see Theorem \ref{HS}). Once again, this problem was approached in \cite{LS}, but in the global and flat case. Nevertheless, from there we borrow the idea of considering the space of horizontal sections, but in a general setting. However, we have not yet been able to establish conditions to guarantee the existence of a horizontal section passing through every point in this generality (flatness is fundamental in the proof of that result in \cite{LS}).


Our second goal is to provide a large family of examples, where we can apply our results to construct smooth fields of Hilbert spaces, smooth trivializations and therefore Hilbert bundles with a connection. In Section \ref{RDI} we develop such examples, which we call Riemannian direct images. We borrow part of the name from \cite{LS}, where an analogous example was developed in the holomorphic framework. Let $M,\,N$ be oriented Riemannian manifolds and $\rho:M\to N$ be a surjective smooth submersion (but $\rho$ is not necessarily a Riemannian submersion). Assume there is a finite-dimensional Hermitian vector bundle $\pi:E\to M$ with a given Hermitian connection $\nabla^{E}$. We set $M_\lb=\rho^{-1}(\lb)$ and a volume form $\mu_\lb$ on $M_\lb$. For each $\lb\in N$, let $ E_\lb:= E|_{M_\lb}$ and consider the Hilbert space $\Hi(\lb):=L^2(E_\lb,\mu_\lb)$ consisting of the square integrable sections on $M_\lb$. In order to construct a smooth structure on the field of Hilbert spaces $\Hi(\lb)$, it is necessary to construct a space of sections $\Gamma^\infty$ and to compute the derivative $Xh(\varphi,\psi)$, for $X\in\vect(N)$ and $\varphi,\psi\in\Gamma^\infty$. Since the inner products on each $\Hi(\lb)$ are an integral over $M_\lb$ and we are looking for a connection satisfying condition \emph{ii)} in Definition \ref{def}, we would like to compute derivatives of functions  $F:N\to\R$ of the form 
\begin{equation}\label{eqF}
F(\lb):=\int_{M_\lb}\,f\,d\mu_{\lb},
\end{equation}
for every $f\in C_c^\infty(M)$. Since $\rho$ is a submersion, $D\rho(x)$  is an isomorphism between the spaces $[T_x M_\lb]^{\perp}$ and $T_{\lb}N$, where $\lb=\rho(x)$ and the orthogonal complement is computed using the Riemannian structure on $M$. This implies that we can lift a vector field $X$ on $N$ to a vector field $\check X$ on $M$, pointing in the normal direction at each $M_\lb$. Let $J(x)$ denote the determinant of the restriction of $D\rho(x)$ to the spaces $[T_x M_{\rho(x)}]^{\perp}$. Using the divergence Theorem and the coarea formula (see Appendix \ref{appcoarea}), we will show the following result, which we find interesting in its own right:

\begin{Theorem}\label{smoothint}
Let $M,\,N$ be oriented Riemannian manifolds, $\rho\in C^m(M,N)$ be a submersion and $f\in C_c^r(M)$, with $1\leq r\leq m$. The map $F:N\to\R$ defined by \eqref{eqF} is $r$ times differentiable, where $\mu_\lb=J^{-1}\eta_\lb$ and $\eta_\lb$ is the volume form on $M_\lb$ induced by its Riemannian structure. Moreover, if $X$ is a vector field on $N$, then
$$
X F(\lb)=\int_{M_\lb}\check X(f)+(\div(\check X)-\div X(\lb))f\,\mu_{\lb},
$$
where the divergence $\div(\check X)$ is computed with respect to the Riemannian volume form on $M$, while the divergence $\div(X)$ is computed with respect to the Riemannian volume form on $N$.

\end{Theorem}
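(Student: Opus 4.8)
The plan is to establish the differentiation formula first in a \emph{weak} sense, by transporting every fiber integral up to $M$ via the coarea formula and integrating by parts there with the divergence theorem, and only afterwards to upgrade this to genuine $C^r$ differentiability. Throughout I write $dV_M$ and $dV_N$ for the Riemannian volume forms and recall (Appendix \ref{appcoarea}) that the normalization $\mu_\lb=J^{-1}\eta_\lb$ makes the coarea formula take the clean Fubini-type form
\[
\int_M h\, dV_M=\int_N\Big(\int_{M_\lb} h\,\mu_\lb\Big)\,dV_N(\lb),\qquad h\in C_c(M).
\]
I would also record the two elementary facts that drive the computation: since $\check X$ is the horizontal lift of $X$ it is $\rho$-related to $X$, whence $\check X(a\circ\rho)=(Xa)\circ\rho$ for $a\in C^\infty(N)$; and for any vector field $\div(f\check X)=\check X(f)+f\,\div(\check X)$.

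\emph{The weak identity.} Denote by $G_X(\lb):=\int_{M_\lb}\big[\check X(f)+(\div(\check X)-\div X(\lb))f\big]\mu_\lb$ the candidate for $XF$. Fixing a test function $a\in C_c^\infty(N)$, I would show $\int_N G_X\,a\,dV_N=-\int_N F\,(Xa+a\,\div X)\,dV_N$. Applying the coarea formula to the first factor, and using $\check X(f)+f\,\div(\check X)=\div(f\check X)$, the left-hand side becomes
\[
\int_M (a\circ\rho)\,\div(f\check X)\,dV_M-\int_M (a\circ\rho)(\div X\circ\rho)\,f\,dV_M .
\]
The divergence theorem applied to the compactly supported field $(a\circ\rho)f\check X$ turns the first integral into $-\int_M \check X(a\circ\rho)\,f\,dV_M=-\int_M ((Xa)\circ\rho)\,f\,dV_M$, by the $\rho$-relatedness identity. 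Converting both integrals back to the base with the coarea formula yields exactly $-\int_N (Xa)\,F\,dV_N-\int_N a\,(\div X)\,F\,dV_N$, which is the asserted identity. In local coordinates on $N$ this says precisely that the distributional derivative $\partial_i F$ is represented by the continuous function $G_{\partial_i}$.

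\emph{Upgrading to $C^r$.} Here one first notes that $F$ is continuous in $\lb$ (again from the coarea formula together with local triviality of the submersion). A locally integrable function all of whose first-order distributional derivatives are continuous is $C^1$, with classical derivatives equal to the weak ones; applied to $F$ this gives $F\in C^1$ and $XF=G_X$ pointwise for every $X\in\vect(N)$. I would then close by induction on $r$: the integrand $g:=\check X(f)+(\div(\check X)-\div X\circ\rho)f$ is again compactly supported (its support lies in that of $f$) and of class $C^{r-1}$, and $G_X(\lb)=\int_{M_\lb} g\,\mu_\lb$ has exactly the form treated by the theorem with $f$ replaced by $g$; since $r-1\le m-1$ and $\rho\in C^m$, the inductive hypothesis gives $G_X\in C^{r-1}$, hence $F\in C^r$.

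The main obstacle is the regularity bookkeeping rather than the identity itself. One must verify that the horizontal lift $\check X$, the Jacobian $J$, and hence $\div(\check X)$ and the fiber density $\mu_\lb=J^{-1}\eta_\lb$ inherit enough smoothness from $\rho\in C^m$ (namely $C^{m-1}$) so that the auxiliary integrand $g$ is genuinely $C_c^{r-1}$ and the induction closes precisely at the threshold $r\le m$; this is exactly where the hypotheses are used sharply. A secondary, but essential, technical point is the clean form of the coarea formula under the normalization $\mu_\lb=J^{-1}\eta_\lb$ and the legitimacy of the two integrations by parts, both of which rest on the compact support of $f$ (and hence of $f\check X$ and of $F$) ruling out boundary contributions.
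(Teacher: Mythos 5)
Your argument is correct, but it follows a genuinely different route from the paper's. The paper computes difference quotients directly, in three stages: for $N=\R$ and $X=\frac{\partial}{\partial\lb}$ it applies the divergence theorem on the slab $\{\lb^0\le\rho\le\lb^0+h\}$ together with the coarea formula and the fundamental theorem of calculus; it then passes to $N=\R^k$ via the auxiliary fibrations $M_\lb^i$ of \eqref{mjlb} and Lemma \ref{l1}, and finally to general $N$ via charts and part b) of Lemma \ref{l2}. You instead prove the identity once, globally, in the weak sense---coarea down to $M$, integration by parts on the boundaryless $M$ using compact support and the $\rho$-relatedness $\check X(a\circ\rho)=(Xa)\circ\rho$, coarea back up---and then upgrade with the standard fact that a locally integrable function whose first-order distributional derivatives are continuous is $C^1$, closing with induction for $r\ge 2$. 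Your route dispenses entirely with the case analysis, with Lemma \ref{l1} (the projections $\pi_x^i$ and the factorization $J=J_i\|\pi_x^i(\nabla\rho_i)\|$ never appear), and with the boundary version of the divergence theorem; the cost is the distributional detour and the regularity-upgrade lemma. Two caveats, both of which leave you on par with the paper rather than behind it. First, continuity of $\lb\mapsto\int_{M_\lb}h\,\mu_\lb$ for $h\in C_c(M)$---which you need both for $F$ and for $G_X$ in the upgrade step, and which the paper needs tacitly in order to invoke the fundamental theorem of calculus in its Case 1---rests on the local normal form for submersions plus dominated convergence, and is written out in neither proof. Second, your regularity bookkeeping is slightly optimistic: $\check X$ and $J$ are indeed $C^{m-1}$ when $\rho\in C^m$, but $\div(\check X)$ costs one further derivative and is only $C^{m-2}$ in general, so your auxiliary integrand $g$ lies in $C_c^{\min(r-1,\,m-2)}$ and the induction, as stated, closes for $r\le m-1$ while losing one derivative at the endpoint $r=m$. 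The paper's proof has the identical defect (it establishes only the first-derivative formula and leaves the claimed $r$-fold differentiability to implicit iteration), and in the applications (Theorem \ref{hilejem}) all data are $C^\infty$, so the point is harmless there; still, you should either weaken your closing claim about the threshold or add the missing one-derivative argument.
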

It is worth to note that the term $\div(\check X)-\div X\circ\rho$ in a certain sense coincides with the divergence of $\check X$ with respect to any $(m-k)$-form $\nu$ on $M$ such that $\eta=\nu\wedge\rho^*\zeta$, where $\eta$ is the Riemannian volume form on $M$ and $\zeta$ is the Riemannian volume form on $N$ (for details see Lemma \ref{l2} and the paragraphs before it). We denote by $\div_\nu(\check X)$ that divergence. Thus, our derivation formula implies that $X F(\lb)=\int_{M_\lb}\div_\nu(f \check X)\,\mu_\lb$.

As we mentioned before, we will apply the derivation formula in Theorem \ref{smoothint} to prove that we can define an explicit smooth structure on the field of Hilbert spaces $\Hi(\lb)=L^2(E_\lb)$. In order to do so, we will take as space of sections $\Gamma^\infty=\Gamma_c^\infty(E)$, i.e. the space of smooth compact supported sections of $E$ (we identify $\varphi\in \Gamma_c^\infty(E)$  with the section given by $\varphi(\lb)=\varphi|_{M_\lb}$). More precisely, in subsection \ref{secsst} we will prove the following result.
\begin{Theorem}\label{hilejem}
Let $\pi:E\to M$ be a finite-dimensional Hermitian bundle with a Hermitian connection $\nabla^{E}$\, and let $\rho:M\to N$ be a submersion, where $M,\,N$ are oriented Riemannian manifolds. Also, let $\Gamma^{\infty}=\Gamma_c^{\infty}(E)$ and $\Hi(\lb)=L^2(E_\lb)$, where $M_\lb$ is endowed with the volume form $\mu_\lb$ defined in \ref{xjmu}. The map $\nabla :\vect(N)\times\Gamma^{\infty}\to \Gamma^{\infty}$ given by
$$
\nabla_{X}( \varphi)=\nabla^{E}_{\check X}( \varphi)+\frac{1}{2}(\div(\check X)-\div(X)\circ\rho) \varphi,
$$
defines a smooth field of Hilbert spaces structure on $H\to N$ with curvature $R[X,Y]=R^E[\check{X},\check{Y}]$, where $R^E$ denotes the curvature of $\nabla^{E}$. 
\end{Theorem}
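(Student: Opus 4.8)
The plan is to verify the three axioms of Definition \ref{def} for the triple $(p:H\to N,\Gamma^\infty,\nabla)$ and then to compute the curvature. Throughout I abbreviate $g_X:=\div(\check X)-\div(X)\circ\rho\in C^\infty(M)$, so that $\nabla_X=\nabla^E_{\check X}+\tfrac12 g_X$, and write $\langle\cdot,\cdot\rangle_E$ for the Hermitian metric of $E$. First I would record the elementary bookkeeping: $\Gamma^\infty=\Gamma^\infty_c(E)$ is closed under addition and under multiplication by $a\in C^\infty(N)$, since $a\cdot\varphi$ corresponds to the section $(a\circ\rho)\varphi$, again smooth with compact support; and $\nabla_X\varphi$ again lies in $\Gamma^\infty_c(E)$ because $\check X$ is a globally defined smooth field, $\nabla^E_{\check X}$ is a local operator, and $\varphi$ has compact support (so its restriction to each $M_\lb$ is a compactly supported, hence $L^2$, section of $E_\lb$).

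Axiom i) reduces to two facts about the normal lift: the lift of $aX$ equals $(a\circ\rho)\check X$, and $\check X(a\circ\rho)=(X(a))\circ\rho$, both immediate from $D\rho(x)\check X(x)=X(\rho(x))$. Combined with the Leibniz rules for $\nabla^E$ and for the divergence, $\div((a\circ\rho)\check X)=(a\circ\rho)\div(\check X)+\check X(a\circ\rho)$, a short computation gives $g_{aX}=(a\circ\rho)g_X$, hence $\nabla_{aX}=a\nabla_X$; additivity in $X$ and the Leibniz identity $\nabla_X(a\varphi)=X(a)\varphi+a\nabla_X\varphi$ follow in the same manner.

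Axiom ii) is the crux, and is exactly where Theorem \ref{smoothint} enters. Writing $f:=\langle\varphi,\psi\rangle_E\in C_c^\infty(M)$ (complex valued, handled via its real and imaginary parts), we have $h(\varphi,\psi)(\lb)=\int_{M_\lb}f\,\mu_\lb=F(\lb)$, so $h(\varphi,\psi)\in C^\infty(N)$ follows directly from Theorem \ref{smoothint}. For the derivation law I would use that $\nabla^E$ is metric, so that $\check X\langle\varphi,\psi\rangle_E=\langle\nabla^E_{\check X}\varphi,\psi\rangle_E+\langle\varphi,\nabla^E_{\overline{\check X}}\psi\rangle_E$, that $\overline{\check X}$ is the normal lift of $\overline X$, and that $\overline{g_X}=g_{\overline X}$ (because $\div$ commutes with conjugation of complexified fields). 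Substituting the defining formula for $\nabla$ into $h(\nabla_X\varphi,\psi)+h(\varphi,\nabla_{\overline X}\psi)$, the scalar $g_X$ contributes $\tfrac12 g_X$ from the first slot and, after the conjugation forced by antilinearity in the second slot, another $\tfrac12 g_X$ from the second; the two halves reassemble $g_X$ and the sum collapses to $\int_{M_\lb}\big(\check X(f)+g_X f\big)\,\mu_\lb=XF(\lb)=Xh(\varphi,\psi)(\lb)$ by Theorem \ref{smoothint}. This is the reason for the factor $\tfrac12$: it distributes the measure-variation term of the integration formula symmetrically between the two arguments. Axiom iii) is then straightforward: every compactly supported smooth section of $E_\lb=E|_{M_\lb}$ extends to an element of $\Gamma^\infty_c(E)$ (as $M_\lb$ is a closed embedded submanifold one extends over a tubular neighborhood and cuts off), so $\Hi^\infty(\lb)\supseteq\Gamma^\infty_c(E_\lb)$, which is dense in $L^2(E_\lb,\mu_\lb)$ since $\mu_\lb=J^{-1}\eta_\lb$ is a smooth positive density.

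Finally, for $R[X,Y]=\nabla_X\nabla_Y-\nabla_Y\nabla_X-\nabla_{[X,Y]}$ I would expand $\nabla_X=\nabla^E_{\check X}+\tfrac12 g_X$. The cross terms pairing the operators $\nabla^E_{\check X},\nabla^E_{\check Y}$ against the multiplications $\tfrac12 g_X,\tfrac12 g_Y$ cancel, leaving the first-order part $\nabla^E_{\check X}\nabla^E_{\check Y}-\nabla^E_{\check Y}\nabla^E_{\check X}-\nabla^E_{Z}$, where $Z$ is the normal lift of $[X,Y]$, together with the scalar $\tfrac12\big(\check X(g_Y)-\check Y(g_X)-g_{[X,Y]}\big)$. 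Applying the commutator–divergence identity $\div[\check X,\check Y]=\check X\div(\check Y)-\check Y\div(\check X)$ on $M$ and its analogue on $N$, the scalar term simplifies to $\tfrac12\div(V)$ with $V:=[\check X,\check Y]-Z$; and since $[\check X,\check Y]$ and $Z$ are both $\rho$-related to $[X,Y]$, the field $V$ is vertical, so $\nabla^E_{[\check X,\check Y]}-\nabla^E_Z=\nabla^E_V$ and the first-order part equals $R^E(\check X,\check Y)+\nabla^E_V$. The main obstacle is controlling this vertical remainder: reducing $R[X,Y]$ to the stated $R^E[\check X,\check Y]$ requires showing that the fiber-tangential operator $\nabla^E_V+\tfrac12\div(V)$ does not contaminate the curvature as a field of operators on the $\Hi(\lb)=L^2(E_\lb,\mu_\lb)$, a point governed entirely by the non-integrability of the normal distribution. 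I expect its resolution to hinge on the $C^\infty(N)$-linearity of $R[X,Y]$ in the section together with the precise relation, dictated by $\mu_\lb=J^{-1}\eta_\lb$, between the ambient divergence $\div(V)|_{M_\lb}$ and the intrinsic fiber divergence $\div_{\mu_\lb}(V|_{M_\lb})$; this reconciliation of ambient and fiberwise divergences is the delicate technical step.
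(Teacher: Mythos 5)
Your verification of axioms i)--iii) is correct and coincides with the paper's own argument: the module bookkeeping and the Leibniz identity via $\div(aX)=X(a)+a\div(X)$, axiom ii) via Theorem \ref{smoothint} combined with metric compatibility of $\nabla^E$ (with the factor $\tfrac12$ splitting the measure-variation term between the two slots exactly as in the paper), and axiom iii) via extension of compactly supported sections of $E_\lb$ to all of $M$ (the paper's Lemma \ref{compsec}).

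The gap is the curvature claim, and it cannot be closed the way you hope. Write $Z=[X,Y]$, let $\check Z$ be its normal lift, and set $V=[\check X,\check Y]-\check Z$. Your expansion $R(X,Y)=R^E(\check X,\check Y)+\nabla^E_V+\tfrac12\div(V)$ is correct, and $V$ is the vertical part of $[\check X,\check Y]$, i.e.\ the obstruction to integrability of the normal distribution $x\mapsto T_x^{\perp}M_{\rho(x)}$, which for a generic submersion does not vanish. Your two proposed tools are powerless against it: the remainder $\nabla^E_V+\tfrac12\div(V)$ is already $C^\infty(N)$-linear in the section (it is a fiberwise differential operator, so it commutes with multiplication by $a\circ\rho$), hence tensoriality of $R$ imposes no constraint on it; and no identity between the ambient and fiberwise divergences can cancel $\nabla^E_V$, a first-order operator whose symbol is nonzero wherever $V\neq 0$. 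In fact the claimed formula is false in general. Take $M=\R^3$, $N=\R^2$, $\rho(x,y,z)=(x,y)$, $E=M\times\C$ with the trivial flat connection, and the metric on $M$ declaring the frame $X_1=\partial_x-\tfrac{y}{2}\partial_z$, $X_2=\partial_y+\tfrac{x}{2}\partial_z$, $\partial_z$ orthonormal. Then $\check\partial_x=X_1$, $\check\partial_y=X_2$, $J\equiv 1$, all relevant divergences vanish, and
$$
R(\partial_x,\partial_y)=[X_1,X_2]=\partial_z\neq 0=R^E(\check\partial_x,\check\partial_y).
$$
What your computation has actually exposed is a flaw in the paper's own proof: the displayed identity there tacitly replaces the normal lift $\check Z$ occurring in $\nabla_{[X,Y]}$ by $[\check X,\check Y]$ (i.e.\ assumes $V=0$), after which Lemma \ref{l2} c) kills the scalar terms. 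The smooth-field-of-Hilbert-spaces part of the theorem stands, but the identity $R(X,Y)=R^E(\check X,\check Y)$ holds only under the additional hypothesis that the normal distribution is integrable (automatic, e.g., when $\dim N=1$); in general the correct statement is $R(X,Y)=R^E(\check X,\check Y)+\nabla^E_V+\tfrac12\div(V)$.
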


In order to construct a Hilbert bundle structure on the field of Hilbert spaces $\Hi(\lb)=L^2(E_\lb)$, we assume that $\rho$ is a fiber bundle. In that case, if $F$ is the fiber of $\pi$ and $K$ is the fiber of $\rho$, then we can explicitly define unitary operators $T_i(\lb):L^2(E_\lb)\to L^2(K,F)$ for every $\lb\in U_i$, where $\{U_i\}_{i\in I}$ is a suitable open cover of $N$. We will give conditions to guarantee that $\{(U_i,L^2(K,F),T_i)\}_{i\in I}$ defines a smooth trivialization for our smooth field of Hilbert spaces in Theorem \ref{trivex1}. In particular, we have the following direct consequence.     

\begin{proposition}\label{prosub}
Let $\pi:E\to M$ be a finite-dimensional Hermitian bundle with a Hermitian connection $\nabla^{E}$, and let $\rho:M\to N$ be a proper submersion, where $M,\,N$ are oriented Riemannian manifolds. Then the smooth field of Hilbert spaces $\Hi(\lb)=L^2(E_\lb)$ admits a local smooth trivialization. In particular, $H\to N$ is a Hilbert bundle with a connection.   
\end{proposition}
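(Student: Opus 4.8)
The plan is to reduce the statement to Theorem \ref{trivex1}, whose standing hypothesis is that $\rho$ be a fiber bundle, by invoking Ehresmann's fibration theorem. The essential input is that a proper submersion between smooth manifolds is automatically a locally trivial fiber bundle: properness guarantees that the flows of the lifted vector fields $\check X$ (pointing in the normal direction at each $M_\lb$, as already set up before Theorem \ref{smoothint}) are complete, so that parallel transport along paths in $N$ is globally defined on each fiber. This produces, over a suitable open cover $\{U_i\}_{i\in I}$ of $N$, diffeomorphisms $\rho^{-1}(U_i)\cong U_i\times K$ with $K$ the typical fiber of $\rho$. Thus the proper-submersion hypothesis delivers exactly the fiber-bundle structure assumed in Theorem \ref{trivex1}.

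Once $\rho$ is known to be a fiber bundle, I would shrink the $U_i$ if necessary so that $E$ also trivializes over each $\rho^{-1}(U_i)$, say $E|_{\rho^{-1}(U_i)}\cong \rho^{-1}(U_i)\times F$ with $F$ the fiber of $\pi$. Composing the local trivialization of $\rho$ with that of $E$ and pulling square-integrable sections back to the model fiber $K$ yields, for each $\lb\in U_i$, a linear map $L^2(E_\lb)\to L^2(K,F)$; correcting by the square root of the density of $\mu_\lb=J^{-1}\eta_\lb$ relative to a fixed reference measure on $K$ turns it into a unitary operator $T_i(\lb)$. This is precisely the family $\{(U_i,L^2(K,F),T_i)\}_{i\in I}$ to which Theorem \ref{trivex1} applies, and that theorem certifies it to be a smooth trivialization.

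The conclusion then follows from the abstract machinery already in place. Theorem \ref{hilejem} furnishes the smooth field of Hilbert spaces $(p:H\to N,\Gamma_c^\infty(E),\nabla)$, and a smooth local trivialization is by definition the assertion that $T_i(\Gamma^\infty(U_i))=C^\infty(U_i,L^2(K,F))$ for every $i$. This is exactly the equivalent condition appearing in Theorem \ref{porpeqivcor}, which therefore upgrades the trivialization to a Hilbert bundle structure carrying a connection $\tilde\nabla$ with $\tilde\nabla|_{\Gamma^\infty}=\nabla$. Hence $H\to N$ is a Hilbert bundle with a connection, as claimed.

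The main obstacle I anticipate is not in the abstract chain of implications but in the passage from proper submersion to fiber bundle, i.e. in establishing and correctly invoking Ehresmann's theorem in this Riemannian setting, together with the bookkeeping needed to verify that the resulting $T_i(\lb)$ genuinely meet the hypotheses of Theorem \ref{trivex1}. In particular, the smooth dependence on $\lb$ of the measures $\mu_\lb=J^{-1}\eta_\lb$ and of the combined trivializations of $\rho$ and $E$ must be tracked carefully so that each $T_i$ defines a unitary smooth field of operators; this is where the derivation formula of Theorem \ref{smoothint} and the explicit form of $\nabla$ in Theorem \ref{hilejem} do the real work.
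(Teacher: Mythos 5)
Your chain of reductions is the same as the paper's---Ehresmann's theorem turns the proper submersion into a fiber bundle, the unitaries $T_i(\lb)$ of equation \eqref{deftl} are built from compatible trivializations of $\rho$ and $E$, and Theorem \ref{trivex1} together with Theorem \ref{porpeqivcor} finishes the job---but there is a genuine gap at the moment you invoke Theorem \ref{trivex1}. That theorem does not apply to an arbitrary fiber bundle: besides the fiber bundle structure, its hypothesis is that the connection-form fields $\tilde{A}_i^{E}(\check X)$ of $\nabla^{E}$ in the chosen local frame belong to $C^\infty_{b}(\rho^{-1}(C),\mathcal B(F))$, i.e.\ are bounded with all derivatives bounded on $\rho^{-1}(C)$, for every compact $C\subset U_i$ and every $X\in\vect(U_i)$. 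You never verify this condition; in your closing paragraph you locate the remaining ``bookkeeping'' in the smooth dependence on $\lb$ of $\mu_\lb$ and of the combined trivializations, but that is material internal to the proof of Theorem \ref{trivex1}, not its hypothesis, so your argument as written treats the theorem as if being a fiber bundle were its only assumption.

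The missing step is precisely where properness is used a second time, and it is the entire content of the paper's proof beyond Ehresmann: since $\rho$ is proper, $\rho^{-1}(C)$ is compact for every compact $C\subset U_i$, hence every smooth $\mathcal B(F)$-valued map is automatically bounded on $\rho^{-1}(C)$ together with all its derivatives, so that $C^\infty(\rho^{-1}(C),\mathcal B(F))=C^\infty_{b}(\rho^{-1}(C),\mathcal B(F))$ and the hypothesis of Theorem \ref{trivex1} holds for free (equivalently, by Remark \ref{prorho}, the Christoffel-type coefficients of $\nabla^{E}$ are bounded on preimages of compacts). This verification is one line, but without it the proof is incomplete; with it, your proof coincides with the paper's.
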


We finish subsection \ref{secsst} discussing two particular cases: the trivial line bundle $E=M\times F$, with $F$ any finite-dimensional vector space; and the tangent bundle $E=TM$. In the first case, since the sections of $E\to M$ are functions $\varphi:M\to F$, we have that $\Hi(\lb)=L^2(M_\lb,F)$ and $\Gamma^\infty=C_c^\infty(M,F)$. It will become clear that if $\rho$ is a fiber bundle (not necessarily proper) then $H\to N$ is a smooth field of Hilbert spaces admitting a (flat) smooth local trivialization; in particular, $H\to N$ is a Hilbert bundle with a connection (see Corollary \ref{TlB}). In the second particular case, we use the Levi-Civita connection $\nabla^L$ on the tangent bundle $TM$, and we rewrite the conditions of Theorem \ref{trivex1} in terms of the Christoffel symbols of $\nabla^L$ (see Corollary \ref{LeCi}).


\subsection{Basic properties of smooth fields of Hilbert spaces and smooth fields of operators}\label{bsfHO}
Unlike in the introduction, in what follows we will usually denote a field of Hilbert spaces by $H\to N$, omitting the letter $p$.  

Let $(H\to N,\Gamma^\infty,\nabla)$ be a smooth field of Hilbert spaces and let $U\subseteq N$ be an open set. Let us recall the definition of the space $\Gamma^n(U)$ given in subsection 3.1 in \cite{LS}, $n\in\mathbb N\cup\{\infty\}$. The space $\Gamma^0(U)$ is the $C(U)$-module of those sections  of $H$ that are locally uniform limits of a sequence in $\Gamma^\infty$. The space $\Gamma^1(U)$ is the $C^1(U)$-module of those $\varphi\in \Gamma^0(U)$ for which there is a sequence $\varphi_j\in\Gamma^\infty$ such that $\varphi_j\to \varphi$ locally uniformly, and for every $X\in \text{Vec}(U)$, the sequence $\nabla_X\varphi_j$ converges locally uniformly. For such $\varphi$, we can define $\nabla_X\varphi=\lim\nabla_X\varphi_j$ (Lemma 3.1.2 in \cite{LS}). The space $\Gamma^n(U)$ is defined inductively: $\varphi\in \Gamma^n(U)$ if $\varphi$ and $\nabla_X\varphi$ belongs to $\Gamma^{n-1}(U)$, for all $X\in\text{Vect}(U)$. Finally,
$\Gamma^\infty(U)=\bigcap\Gamma^n(U)$.
The spaces $\Gamma^n(U)$ and $\Gamma^\infty(U)$ are Fréchet spaces with the seminorms defined by
\begin{equation}\label{SN}
||\varphi||_{C,X_1,\cdots X_m}=\sup\{||\nabla_{X_1}\cdots\nabla_{X_m}\varphi(\lb)||:\lb\in C\},   
\end{equation}

\noindent where $C\subseteq U$ is compact, $X_1,\cdots,X_m\in\operatorname{Vect}(U)$ and $m\leq n$ (we can take $X_1,\cdots X_m\in\Xi$, with $\Xi\subset \text{Vect}(U)$ finite and generating the tangent space at each $\lb\in U$). 
\begin{Remark}
{\rm
Lemma 2.2.3 in \cite{LS} implies that the restriction $\Gamma^\infty|_U$, together with the connection given by $\nabla^{U}_X(\varphi|_U)=\nabla_X(\varphi)|_U$, defines a smooth structure on $H|_U\mapsto U$.
}
\end{Remark}
It will become useful to extend the notion of smooth field of operators given \cite{BBC}. Recall that if $H^1\to N$ and $H^2\to N$ are fields of Hilbert spaces and $A=\{A(\lb)\}$ is a field of operators (i.e. $A(\lb)$ is an operator with domain in $H^1(\lb)$ and range in $H^2(\lb)$), then $A$ can be interpreted as a map sending suitable sections into sections, defining $(A\varphi)(\lb):=A(\lb)\varphi(\lb)$. 

\begin{Definition}\label{SFO}
Let $(H^1\to N,\Gamma^\infty_1,\nabla^1)$ and $(H^2\to N,\Gamma^\infty_2,\nabla^2)$ be smooth field of Hilbert spaces. Also let  
 $A=\{A(\lb)\}$ be a field of operators with domains $D[A(\lb)]\subseteq \Hi_1(\lb)$ and $A(\lb):D[A(\lb)]\to\Hi_2(\lb)$, for each $\lb\in N$. We say that $A$ is $n$-times smooth if  
\begin{enumerate}
 \item[i)] $D[A(\lb)]$ contains $\Hi_1^\infty(\lb):=\{\varphi(\lb)\mid \varphi\in \Gamma_{1}^\infty\}$ and $D[A^*(\lb)]$ contains $\Hi_{2}^\infty(\lb)$.
\item[ii)] $A(\Gamma_{1}^\infty)\subseteq \Gamma_2^n(N)$ and $A^*(\Gamma_{2}^\infty)\subseteq \Gamma_1^n(N)$.
\end{enumerate}
We denote by $\AA^n(\Gamma^\infty_1,\Gamma^\infty_2)$ the space of $n$-times smooth fields of operators. We say that a field of operators $A$ is smooth if A belongs to $\AA^n(\Gamma^\infty_1,\Gamma^\infty_2)$, for every $n\in\mathbb N$. We denote by $\AA^\infty(\Gamma^\infty_1,\Gamma^\infty_2)$ the space of  smooth fields of operators. We also define $\AA^n(\Gamma^\infty_1)=\AA^n(\Gamma^\infty_1,\Gamma^\infty_1)$ and $\AA^\infty(\Gamma_1^\infty)=\AA^\infty(\Gamma^\infty_1,\Gamma^\infty_1)$. 
\end{Definition}

\begin{Remark}\label{cmoduleop}
{\rm
Let $A$ be an $n$-times smooth field of operators. For a given function  $a\in C^{\infty}(N)$, by using the rule $\nabla_X(aA\varphi)=X(a)A\varphi+a\nabla_X(A\varphi)$, one can show that the operator $aA$ is also a $n$-smooth field of operators. Moreover, the $n$-times fields of operators form a $C^{\infty}(N)$-module. See \cite{BBC} for more properties  
}
\end{Remark}

Let $A$ be a smooth field of operators. Then, for each $X\in\vect{(N)}$, the operator $\hat\nabla_X(A)=\nabla^2_X A-A\nabla^1_X$ also maps $\Gamma_{1}^\infty$ into $\Gamma_{2}^\infty(N)$, and by definition we have the Leibniz's rule
\begin{equation}\label{Leib}
\nabla^2_X(A\varphi)=\hat\nabla_X(A)\varphi+A\nabla^1_X(\varphi),   
\end{equation}
for every $\varphi\in\Gamma_{1}^\infty$. The following result summarizes the main properties of $\hat\nabla$ and  is a straightforward generalization of Theorem 2.6 in \cite{BBC}. 

\begin{proposition}\label{Con}
Let $(H^1\to N,\Gamma^\infty_1,\nabla^1)$ and $(H^2\to N,\Gamma^\infty_2,\nabla^2)$ be smooth field of Hilbert spaces and $A\in \AA^\infty(\Gamma^\infty_1,\Gamma^\infty_2)$. Then $\hat\nabla_X(A)=[\nabla_X,A]$ is also a smooth field of operators and has the following properties for all $X,Y\in \operatorname{Vect}(N)$, $f\in C^{\infty}(N)$.
\begin{enumerate}
\item[i)] $\hat\nabla_{X+Y}(A)=\hat\nabla_{X}(A)+\hat\nabla_{Y}(A)\,, \quad ii)\;\hat\nabla_{fX}(A)=f\hat\nabla_{X}(A)$ 
\item[ii)] $\hat\nabla_{X}(fA)=X(f)A+f\hat\nabla_{X}(A),$
\item[iii)] $h(\hat\nabla_{X}(A)\varphi,\psi)=h(\varphi,\hat\nabla_{\overline{X}}(A^{*})\psi)$, for every $\varphi\in\Gamma_1^\infty$ and $\psi\in\Gamma_2^\infty$.
\item[iv)]$\hat\nabla_{\overline{X}}(A^{*})(\lb)\subseteq[\hat\nabla_{X}(A)(\lb)]^*$, for each $\lb\in N$.
\end{enumerate}
\end{proposition}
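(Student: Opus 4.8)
The plan is to dispose of the algebraic identities i) and ii) first, and then to establish iii), from which both iv) and the claim that $\hat\nabla_X(A)$ is itself a smooth field of operators will follow with little extra work. For i) and ii) I would simply expand $\hat\nabla_X(A)=\nabla^2_X A-A\nabla^1_X$ and invoke the connection axioms of Definition \ref{def}\,i) together with the fibrewise linearity of $A$: additivity $\hat\nabla_{X+Y}(A)=\hat\nabla_X(A)+\hat\nabla_Y(A)$ and the homogeneity $\hat\nabla_{fX}(A)=f\hat\nabla_X(A)$ come directly from the corresponding properties of $\nabla^1,\nabla^2$ (using $A(f\psi)=fA\psi$ at each fibre), while $\hat\nabla_X(fA)=X(f)A+f\hat\nabla_X(A)$ follows by applying the Leibniz rule $\nabla^2_X(fA\varphi)=X(f)A\varphi+f\nabla^2_X(A\varphi)$ and regrouping. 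These are routine.

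The core is iii). Fix $\varphi\in\Gamma^\infty_1$ and $\psi\in\Gamma^\infty_2$ and expand
$$h(\hat\nabla_X(A)\varphi,\psi)=h(\nabla^2_X(A\varphi),\psi)-h(A\nabla^1_X\varphi,\psi).$$
I would then move the derivative $X$ across the pairing using the metric compatibility ii) of Definition \ref{def} on $H^2$ to write $h(\nabla^2_X(A\varphi),\psi)=Xh(A\varphi,\psi)-h(A\varphi,\nabla^2_{\overline X}\psi)$, and convert every pairing of the form $h(A\,\cdot\,,\,\cdot\,)$ into $h(\,\cdot\,,A^*\,\cdot\,)$ by the fibrewise adjoint relation $\langle A(\lb)u,v\rangle_\lb=\langle u,A^*(\lb)v\rangle_\lb$, which is legitimate because smooth sections take values in $\Hi^\infty_1(\lb)\subseteq D[A(\lb)]$ and $\Hi^\infty_2(\lb)\subseteq D[A^*(\lb)]$. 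Applying compatibility ii) on $H^1$ to $Xh(\varphi,A^*\psi)$ and then the Leibniz rule \eqref{Leib} for the smooth field $A^*$ (which is smooth since $A$ is), in the form $\nabla^1_{\overline X}(A^*\psi)=\hat\nabla_{\overline X}(A^*)\psi+A^*\nabla^2_{\overline X}\psi$, the two terms containing $\nabla^2_{\overline X}\psi$ cancel, leaving exactly $h(\varphi,\hat\nabla_{\overline X}(A^*)\psi)$.

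From iii), property iv) is a pointwise consequence: for each $\lb$ and each $\psi\in\Gamma^\infty_2$, the identity shows that $\xi\mapsto\langle\hat\nabla_X(A)(\lb)\xi,\psi(\lb)\rangle_\lb$ agrees with $\langle\xi,\hat\nabla_{\overline X}(A^*)(\lb)\psi(\lb)\rangle_\lb$ on the dense set $\Hi^\infty_1(\lb)$ (Definition \ref{def}\,iii)), so $\psi(\lb)\in D\big[[\hat\nabla_X(A)(\lb)]^*\big]$ and the two operators coincide there, giving the stated inclusion $\hat\nabla_{\overline X}(A^*)(\lb)\subseteq[\hat\nabla_X(A)(\lb)]^*$. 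Finally, that $\hat\nabla_X(A)\in\AA^\infty(\Gamma^\infty_1,\Gamma^\infty_2)$ follows by checking Definition \ref{SFO}: the inclusion $\hat\nabla_X(A)(\Gamma^\infty_1)\subseteq\Gamma^\infty_2(N)$ holds because $\nabla^2_X$ preserves $\Gamma^\infty_2(N)$ and $A$ smooth gives $A\nabla^1_X\varphi\in\Gamma^\infty_2(N)$, while by iv) the relevant adjoint, restricted to $\Gamma^\infty_2$, is $\hat\nabla_{\overline X}(A^*)$, which maps into $\Gamma^\infty_1(N)$ by the symmetric argument.

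I expect the only delicate point to be the bookkeeping in iii): keeping straight which connection ($\nabla^1$ or $\nabla^2$) acts on each factor, and verifying at every step that the intermediate sections are smooth enough and lie in the appropriate fibrewise domains so that the adjoint relation and the Leibniz rule may be applied. The decisive cancellation of the $\nabla^2_{\overline X}\psi$ terms is what makes the identity close, and everything else is a consequence of it.
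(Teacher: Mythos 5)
Your treatment of i)--iii) is correct and follows the same route as the paper (which itself defers to Theorem 2.6 of \cite{BBC}): i) and ii) are direct computations, and iii) is exactly the cancellation argument you describe, resting on metric compatibility, the fibrewise adjoint relation, and the definition of $\hat\nabla_{\overline X}(A^*)$ as a commutator. The domain checks you flag (that $\nabla^1_X\varphi(\lb)\in\Hi^\infty_1(\lb)\subseteq D[A(\lb)]$ and $\nabla^2_{\overline X}\psi(\lb)\in\Hi^\infty_2(\lb)\subseteq D[A^*(\lb)]$) are indeed what legitimize the adjoint moves; the one further technicality you should acknowledge is that Definition \ref{def} ii) is stated only for pairs in $\Gamma^\infty$, whereas $A\varphi$ and $A^*\psi$ lie in the completions $\Gamma^\infty_2(N)$ and $\Gamma^\infty_1(N)$, so metric compatibility must first be extended to these spaces by locally uniform limits.

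There is, however, one genuine omission, and it is precisely the step the paper makes the centerpiece of its (short) proof: you never show that $\hat\nabla_X(A)$ is a \emph{field of operators} at all. As defined, $\hat\nabla_X(A)=\nabla^2_XA-A\nabla^1_X$ is only an operator on sections, and a priori $(\hat\nabla_X(A)\varphi)(\lb)$ could depend on the germ of $\varphi$ at $\lb$ rather than on the value $\varphi(\lb)$ alone --- exactly as happens for $\nabla_X$ itself, which is not tensorial. Your argument for iv) writes $\hat\nabla_X(A)(\lb)\xi$ for $\xi\in\Hi^\infty_1(\lb)$, which presupposes that this fibrewise operator exists; without the tensoriality step this is circular. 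The repair is short and uses only what you have already proved: if $\varphi\in\Gamma^\infty_1$ satisfies $\varphi(\lb)=0$, then iii) gives $\langle(\hat\nabla_X(A)\varphi)(\lb),\psi(\lb)\rangle_\lb=\langle\varphi(\lb),(\hat\nabla_{\overline X}(A^*)\psi)(\lb)\rangle_\lb=0$ for every $\psi\in\Gamma^\infty_2$, and the density of $\Hi^\infty_2(\lb)$ in $\Hi_2(\lb)$ (Definition \ref{def} iii)) forces $(\hat\nabla_X(A)\varphi)(\lb)=0$. Hence $\hat\nabla_X(A)(\lb)u:=(\hat\nabla_X(A)\varphi)(\lb)$, for any $\varphi\in\Gamma^\infty_1$ with $\varphi(\lb)=u$, is well defined on $\Hi^\infty_1(\lb)$. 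This is exactly the paper's argument, and it must be inserted between your iii) and iv); once it is, your iv) and your verification of Definition \ref{SFO} go through as written.
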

The proof of the previous result is practically the same as 
the proof of Theorem 2.6 in \cite{BBC}. Nevertheless, let us point out that {\it i)} and {\it ii)} follow from a direct computation, and {\it iii)} follows from condition {\it ii)} in Definition \ref{def}. In order to show that $\hat\nabla_X(A)$ is a field of operators, for each $\lb\in N$ and $u\in \Hi_1^\infty(\lb)$, we define 
$$
\hat\nabla_X\left(A\right)(\lb)u=(\hat\nabla_X(A)\varphi)(\lb),
$$
where $\varphi\in\Gamma^\infty$ is such that $\varphi(\lb)=u$. Notice that property {\it iii)} and the density of $\Hi^\infty_2(\lb)$ in $\Hi_2(\lb)$ implies that if $\varphi(\lb)=0$, then $\hat\nabla_X(A)\varphi(\lb)=0$. The latter implies that $\hat\nabla_X(A)(\lb)$ is well defined (independent of the required $\varphi)$. 

.

\begin{Remark}\label{sder}
{\rm
Let us explain the concepts we have introduced so far by considering the trivial case, i.e., $H=N\times \Hi$, where $\Hi$ is a Hilbert space. As we mentioned before, in this case, the space of smooth sections is $C^\infty(N,\Hi)$, and $\nabla_X=X$ makes $H$ a Hilbert bundle with a connection. In what follows we say that $\Gamma^\infty$ is a smooth $C^\infty(N)$-submodule of $C^\infty(N,\Hi)$ if $\Gamma^\infty$ is a subspace in $C^\infty(N,\Hi)$ such that $f\varphi$ and $X(\varphi)$ belong to $\Gamma^\infty$, for every $f\in C^\infty(N)$ and  $X\in\vect{(N)}$. 
\begin{enumerate}
    \item[a)]  If $\Gamma^\infty$ is a dense $C^\infty(N)$-submodule of $C^\infty(N,\Hi)$, then $\Gamma^\infty$ defines a smooth field of Hilbert spaces structure on $H=N\times\Hi$. Indeed, condition {\it iii)} in Definition \ref{def} follows by considering vectors in $\Hi$ as constant sections. Moreover, by definition $\Gamma^\infty(N)= C^\infty(N,\Hi)$. We expect that the converse claim holds true as well (i.e. every space of sections defining a smooth field of Hilbert spaces on $N\times \Hi$ with $\Gamma^\infty(N)= C^\infty(N,\Hi)$ is a dense $C^\infty(N)$-submodule in $C^\infty(N,\Hi)$), but we will not need such a result, at least in this article.

    \item[b)] If $\Gamma_1^\infty$ is a dense smooth $C^\infty(N)$-submodule of $C^\infty(N,\Hi_1)$, $\Gamma_2^\infty$ is a dense smooth $C^\infty(N)$-submodule of $C^\infty(N,\Hi_2)$ and $\Gamma_1^\infty$ contains the constant sections, then $\hat\nabla$ coincide with the canonical strong derivation. Indeed, if $A\in\AA^1(\Gamma^\infty_1,\Gamma^\infty_2)$ then equation \eqref{Leib} implies that 
$$
X(Au)=\hat\nabla_X(A)u,
$$
for any $u\in\Hi_1$. In what follows, in the trivial case we will denote by $\hat X(A)$ the field of operators $\hat\nabla_X(A)$.
\end{enumerate}

}    
\end{Remark}

\begin{exmp}\label{exmp}
Let $D\subseteq \Hi$ be a dense subspace. An important example of a dense smooth $C^\infty(N)$-submodule is the space $P^\infty(N,D)$ of all the sections $f\in C^\infty(N,\Hi)$ of the form $f(\lb)=\sum_j^n a_k(\lb)x_k$, with $a_k\in C^\infty(N)$ and $x_k\in D$. Indeed, the case $D=\Hi$ follows from \cite[Proposition 44.2]{Tre} and the general case follows from an $\epsilon/2$-argument.
\end{exmp}
The following trivial result asserts that Leibniz's rule also holds for $\hat\nabla$ whenever it makes sense.

\begin{proposition}\label{Leib1}
Let $(H^1\to N,\Gamma^\infty_1,\nabla^1)$, $(H^2\to N,\Gamma^\infty_2,\nabla^2)$ and $(H^3\to N,\Gamma^\infty_3,\nabla^3)$ be smooth field of Hilbert spaces. Also, let $B\in\AA^\infty(\Gamma^\infty_1,\Gamma^\infty_2)$. If $A:\Gamma^\infty_2(N)\to\Gamma^\infty_3(N)$, then 
$$
\hat\nabla_X(AB)=\hat\nabla_X(A)B+A\hat\nabla_X(B),
$$
for every $X\in\vect{(N)}$. The same identity holds if $B:\Gamma^\infty_1\to\Gamma^\infty_2$ and $A\in\AA^\infty(\Gamma^\infty_2,\Gamma^\infty_3)$.
\end{proposition}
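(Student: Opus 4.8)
The plan is to reduce the claimed operator identity to a computation on sections. Both sides are maps out of $\Gamma^\infty_1$, so it suffices to check that they agree when applied to an arbitrary $\varphi\in\Gamma^\infty_1$. Here $B$ is a genuine smooth field of operators, so $\hat\nabla_X(B)$ is the object produced by Proposition \ref{Con} and satisfies the Leibniz relation \eqref{Leib}; the symbols $\hat\nabla_X(A)$ and $\hat\nabla_X(AB)$ are to be read, in the spirit of ``whenever it makes sense'', through the same defining formula, i.e. $\hat\nabla_X(A)=\nabla^3_X A-A\nabla^2_X$ on $\Gamma^\infty_2(N)$ and $\hat\nabla_X(AB)=\nabla^3_X (AB)-(AB)\nabla^1_X$ on $\Gamma^\infty_1$.

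First I would fix $X\in\vect(N)$ and $\varphi\in\Gamma^\infty_1$ and record the membership facts that make every term below legitimate: $B\varphi\in\Gamma^\infty_2(N)$ (since $B\in\AA^\infty$), $\nabla^1_X\varphi\in\Gamma^\infty_1$ and $\nabla^2_X(B\varphi)\in\Gamma^\infty_2(N)$ (since $\nabla$ preserves $\Gamma^\infty$ and $\Gamma^\infty(N)$), and $\hat\nabla_X(B)\varphi\in\Gamma^\infty_2(N)$ (again by Proposition \ref{Con}); in particular $A$ may be applied to each of $B\varphi$, $\nabla^2_X(B\varphi)$, $B\nabla^1_X\varphi$ and $\hat\nabla_X(B)\varphi$. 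Then I would expand
$$
\bigl[\hat\nabla_X(A)B+A\hat\nabla_X(B)\bigr]\varphi=\hat\nabla_X(A)(B\varphi)+A\bigl(\hat\nabla_X(B)\varphi\bigr),
$$
substitute $\hat\nabla_X(A)(B\varphi)=\nabla^3_X(AB\varphi)-A\bigl(\nabla^2_X(B\varphi)\bigr)$ and, using additivity of $A$, $A\bigl(\hat\nabla_X(B)\varphi\bigr)=A\bigl(\nabla^2_X(B\varphi)\bigr)-A\bigl(B\nabla^1_X\varphi\bigr)$. The two copies of $A\bigl(\nabla^2_X(B\varphi)\bigr)$ telescope away, leaving $\nabla^3_X(AB\varphi)-AB\nabla^1_X\varphi=\hat\nabla_X(AB)\varphi$, which is exactly the right-hand side.

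The one-line cancellation is the whole content; the only obstacle is the domain bookkeeping of the previous paragraph, and this is precisely what the hypotheses are calibrated to supply. For the symmetric statement, where $B:\Gamma^\infty_1\to\Gamma^\infty_2$ is a mere (linear) map and $A\in\AA^\infty(\Gamma^\infty_2,\Gamma^\infty_3)$ is the genuine smooth field of operators, the identical expansion applies verbatim: the interior term $A(\nabla^2_X(B\varphi))$ again cancels, and the only change is that now $\hat\nabla_X(A)$ is the object from Proposition \ref{Con} while $\hat\nabla_X(B)=\nabla^2_X B-B\nabla^1_X$ is read off the defining formula on $\Gamma^\infty_1$. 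Hence the proposition.
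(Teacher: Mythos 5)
Your proof is correct: the paper offers no proof at all (it labels the proposition "trivial"), and your expansion-and-cancellation argument — applying both sides to $\varphi\in\Gamma^\infty_1$, writing each $\hat\nabla_X$ via its defining formula, and letting the two copies of $A\bigl(\nabla^2_X(B\varphi)\bigr)$ cancel — is exactly the intended verification. Your domain bookkeeping (that $B\varphi$, $\nabla^2_X(B\varphi)$, $B\nabla^1_X\varphi$ and $\hat\nabla_X(B)\varphi$ all lie where $A$ is defined, in $\Gamma^\infty_2(N)$ in the first case and in the appropriate spaces in the second) is precisely what the hypotheses are designed to guarantee, so nothing is missing.
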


The following result provides some general conditions on a smooth field of operators in terms of their derivatives to guarantee that it can be extended to the completed space $\Gamma_1^\infty(N)$. In the next subsection we will find more subtle conditions when $H^2\to N$ is trivial. But first, we require the following definition. 
\begin{Definition}
A field of bounded operators $A=\{A(\lb)\}_{\lb\in N}$ is called uniformly bounded if there is a common bound for the norms of those operators. A field of operators $A=\{A(\lb)\}_{\lb\in N}$ is called locally uniformly bounded if for every $\lb\in N$ there is an open set $U\subseteq N$ such that $A|_U$ is uniformly bounded.    
\end{Definition}
\begin{proposition}\label{lub}
Let $(H^1\to N,\Gamma^\infty_1,\nabla^1)$ and $(H^2\to N,\Gamma^\infty_2,\nabla^2)$ be smooth field of Hilbert spaces and fix $n\in\mathbb{N}\cup\{0\}$. If $A\in \AA^n(\Gamma^\infty_1,\Gamma^\infty_2)$ and $\hat\nabla_{X_1}\cdots \hat\nabla_{X_k}(A)$ is a locally uniformly bounded fields of operators, for every $k\leq n$ and $X_1,\cdots, X_k\in\vect{(N)}$, then $A\left(\Gamma_1^n(N)\right)\subseteq \Gamma_2^n(N)$ and $A:\Gamma_1^n(N)\to \Gamma_2^n(N)$ is continuous.
\end{proposition}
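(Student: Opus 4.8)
The plan is to reduce everything to a single higher-order Leibniz identity and then push a seminorm estimate through to the completion $\Gamma_1^n(N)$. Fix $\varphi\in\Gamma_1^\infty$ and $X_1,\dots,X_k\in\vect(N)$ with $k\le n$; note that every $\nabla^1_{X_{j_1}}\cdots\nabla^1_{X_{j_q}}(\varphi)$ again lies in $\Gamma_1^\infty$. Iterating the Leibniz rule \eqref{Leib} — where at each stage the relevant iterated $\hat\nabla$ of $A$ is a bona fide field of operators by the very hypothesis of the statement (cf. Proposition \ref{Con}) — I would prove by induction on $k$ the expansion
\begin{equation*}
\nabla^2_{X_1}\cdots\nabla^2_{X_k}(A\varphi)=\sum_{I\subseteq\{1,\dots,k\}}\hat\nabla_{X_I}(A)\,\nabla^1_{X_{I^c}}(\varphi),
\end{equation*}
where, writing $I=\{i_1<\cdots<i_p\}$ and $I^c=\{j_1<\cdots<j_q\}$, we set $\hat\nabla_{X_I}(A)=\hat\nabla_{X_{i_1}}\cdots\hat\nabla_{X_{i_p}}(A)$ and $\nabla^1_{X_{I^c}}(\varphi)=\nabla^1_{X_{j_1}}\cdots\nabla^1_{X_{j_q}}(\varphi)$. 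The inductive step is exactly \eqref{Leib} applied to the field of operators $\hat\nabla_{X_I}(A)$ acting on the test section $\nabla^1_{X_{I^c}}(\varphi)$: prepending one more factor $\nabla^2_X$ either lands an extra $\hat\nabla$ on the operator factor or an extra $\nabla^1$ on the section factor. The decisive feature is that every operator factor appearing is an iterated $\hat\nabla$ of $A$ of order $p\le k\le n$, hence locally uniformly bounded by hypothesis.

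With the identity in hand, the estimate is routine. Fix a compact set $C\subseteq N$. Each of the finitely many fields $\hat\nabla_{X_I}(A)$ is locally uniformly bounded, so covering $C$ by finitely many open sets carrying a uniform bound and taking the maximum yields $M_I:=\sup_{\lb\in C}\|\hat\nabla_{X_I}(A)(\lb)\|<\infty$. Applying $\|\hat\nabla_{X_I}(A)(\lb)\,\nabla^1_{X_{I^c}}(\varphi)(\lb)\|\le M_I\,\|\nabla^1_{X_{I^c}}(\varphi)(\lb)\|$ termwise and taking the supremum over $\lb\in C$ gives, in the seminorm notation of \eqref{SN},
\begin{equation*}
\|A\varphi\|_{C,X_1,\dots,X_k}\le\sum_{I\subseteq\{1,\dots,k\}}M_I\,\|\varphi\|_{C,X_{j_1},\dots,X_{j_q}}.
\end{equation*}
Since the right-hand side is a finite combination of $\Gamma_1^n(N)$-seminorms of $\varphi$, this shows that $A:\Gamma_1^\infty\to\Gamma_2^n(N)$ is continuous for the topology induced from $\Gamma_1^n(N)$.

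Finally, I would extend and identify. The space $\Gamma_1^\infty$ is dense in the Fréchet space $\Gamma_1^n(N)$ and $\Gamma_2^n(N)$ is complete, so the previous estimate lets $A$ extend uniquely to a continuous linear map $\tilde A:\Gamma_1^n(N)\to\Gamma_2^n(N)$. It remains to check that $\tilde A$ coincides with the pointwise action $\varphi\mapsto\{A(\lb)\varphi(\lb)\}$. Here I use the case $k=0$ of the hypothesis: $A$ itself is locally uniformly bounded, so each $A(\lb)$, a priori defined only on the dense domain $\Hi_1^\infty(\lb)$, is bounded and extends to all of $\Hi_1(\lb)$; thus $A(\lb)\varphi(\lb)$ is meaningful for every $\varphi\in\Gamma_1^n(N)$. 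Given such a $\varphi$, choose $\varphi_m\in\Gamma_1^\infty$ converging to $\varphi$ in $\Gamma_1^n(N)$, hence locally uniformly. Then $\tilde A\varphi=\lim_m A\varphi_m$ locally uniformly, while pointwise continuity of $A(\lb)$ gives $A(\lb)\varphi_m(\lb)\to A(\lb)\varphi(\lb)$; comparing the two limits yields $(\tilde A\varphi)(\lb)=A(\lb)\varphi(\lb)$. Hence $A\varphi\in\Gamma_2^n(N)$ and $A$ is the asserted continuous map.

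I expect the main obstacle to be this last identification rather than the estimate: giving meaning to $A\varphi$ for a merely $n$-times differentiable section $\varphi$ — which need not lie in any $\Hi_1^\infty(\lb)$ — is exactly what forces the use of the $k=0$ boundedness to extend each $A(\lb)$ to the full fiber. The only computation requiring genuine care is the bookkeeping in the higher Leibniz identity, ensuring that the operator coefficients produced are precisely the iterated $\hat\nabla$'s of $A$ of order at most $n$ that the hypothesis controls.
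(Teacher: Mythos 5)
Your higher-order Leibniz expansion, the resulting seminorm estimate, and the observation that the $k=0$ hypothesis is what lets each $A(\lb)$ extend to the whole fiber $\Hi_1(\lb)$ (so that $A\varphi$ is pointwise meaningful for $\varphi\in\Gamma_1^n(N)$) are all correct, and the estimate is essentially the paper's own continuity argument. The gap is in your extension step: you assert that $\Gamma_1^\infty$ is dense in the Fr\'echet space $\Gamma_1^n(N)$, and this is neither proved in the paper nor a consequence of the definitions once $n\geq 2$. Recall from subsection \ref{bsfHO} that $\Gamma^n(N)$ is defined \emph{recursively}: $\varphi\in\Gamma^n(N)$ iff $\varphi$ and every $\nabla_X\varphi$ lie in $\Gamma^{n-1}(N)$. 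Only at levels $n=0,1$ does the definition hand you a single sequence in $\Gamma_1^\infty$ converging to $\varphi$ in the relevant seminorms. For $\varphi\in\Gamma^2(N)$ you get one sequence approximating $\varphi$ in the $\Gamma^1$ topology and, for each $X$, a \emph{separate} sequence approximating $\nabla_X\varphi$ in the $\Gamma^1$ topology; in this abstract setting (no mollification, no local structure on the fibers) there is no way to merge these into a single sequence $\varphi_j\in\Gamma_1^\infty$ with $\nabla_Y\nabla_X\varphi_j$ also convergent. So your argument, as written, proves the proposition only on the closure of $\Gamma_1^\infty$ inside $\Gamma_1^n(N)$, which may a priori be a proper subspace. (The informal description of $\Gamma^n(N)$ in the introduction does suggest the ``closure'' picture, which may be what misled you, but the operative definition is the recursive one.)

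The paper's proof is structured precisely to avoid needing density: it argues by induction on $n$, exploiting the recursive definition of $\Gamma^n$. For $\varphi\in\Gamma_1^n(N)$ one has $\varphi,\nabla^1_X\varphi\in\Gamma_1^{n-1}(N)$, and the inductive hypothesis applies both to $A$ and to $\hat\nabla_X(A)$ (which satisfies the order-$(n-1)$ hypotheses, its iterated derivatives being among those of $A$ of order $\leq n$); hence $A\varphi\in\Gamma_2^{n-1}(N)$ and $\nabla^2_X(A\varphi)=\hat\nabla_X(A)\varphi+A\nabla^1_X(\varphi)\in\Gamma_2^{n-1}(N)$, so $A\varphi\in\Gamma_2^n(N)$ directly by the recursive definition. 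The Leibniz identity \eqref{Leibg} on $\Gamma_1^1(N)$ is justified at the base case $n=1$, where the definitional approximation by $\Gamma_1^\infty$-sequences does exist and only the already-settled $n=0$ case is invoked to pass to the limit. Your seminorm estimate can then be kept verbatim for the continuity assertion. If instead you want to retain your extension-by-density scheme, you would first have to prove that $\Gamma_1^\infty$ is dense in $\Gamma_1^n(N)$ for all $n$, which is exactly the point at issue and appears to be an open question in this generality.
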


\begin{proof}
We will prove our claim by induction. If $A$ is locally uniformly bounded, then it defines a continuous operator on $\Gamma_1^\infty$ with respect to the locally uniform convergence topology, thus $A(\Gamma_1^0(N))\subseteq \Gamma_2^0(N)$ and the case $n=0$ follows.  For the case $n=1$, let $\varphi\in \Gamma_1^1(N)$. Then there is a sequence $(\varphi_m)$ in $\Gamma_1^\infty(N)$ such that $\varphi_m\to \varphi$ and $\nabla_X^1(\varphi_m)\to \nabla_X^1(\varphi)$, where the limits are taking in the locally uniform convergence topology and $X\in\vect{(N)}$. Then 
$$
\nabla_X^2(A\varphi_m)
    =\hat\nabla_X(A)\varphi_m+A\nabla^1_X(\varphi_m).
$$
The case $n=0$ implies that the right hand side converges to 
$ \hat\nabla_X(A)\varphi+A\nabla^1_X(\varphi)$. Thus, by definition $A\varphi\in \Gamma_1^2(N)$ and 
\begin{equation}\label{Leibg}
   \nabla_X^2(A\varphi)=\hat\nabla_X(A)\varphi+A\nabla^1_X(\varphi). 
\end{equation}
Assume our claim holds for $n-1$ and let $A\in \AA^n(\Gamma^\infty_1,\Gamma^\infty_2)$ such that $\hat\nabla_{X_1}\cdots \hat\nabla_{X_k}(A)$ is locally uniformly bounded fields of operators, for every $k\leq n$ and $X_1,\cdots, X_k\in\vect{(N)}$. Then $A\left(\Gamma_1^{n-1}(N)\right)\subseteq \Gamma_2^{n-1}(N)$ and $\hat\nabla_X( A)\left(\Gamma_1^{n-1}(N)\right)\subseteq \Gamma_2^{n-1}(N)$, for every $X\in\vect{(N)}$. Moreover, if $\varphi\in \Gamma_1^{n}(N)$, then equation \eqref{Leibg} implies that  $\nabla_X^2(A\varphi)\in \Gamma_2^{n-1}(N)$. Therefore, $A\varphi\in \Gamma_2^{n}(N)$. The continuity of $A$ follows from noticing that $\nabla^2_{X_1}\cdots\nabla^2_{X_k}(A\varphi)$ is a sum of terms of the form $\hat\nabla_{X_{i_1}}\cdots\hat\nabla_{X_{i_n}}(A)(\nabla^1_{X_{j_1}}\cdots\nabla^1_{X_{j_m}}\varphi)$, where $\{i_1,\cdots,i_n\}\cup\{j_1,\cdots,j_m\}=\{1,\cdots k\}$ and $\{i_1,\cdots,i_n\}\cap\{j_1,\cdots,j_m\}=\emptyset$.

\end{proof}

\begin{Definition}
For each $n\in\mathbb N$, we denote by $\AA^n_{lb}(\Gamma^\infty_1,\Gamma^\infty_2)$ the space formed by all the fields of operators $A\in\AA^n(\Gamma^\infty_1,\Gamma^\infty_2)$ such that $\hat\nabla_{X_1}\cdots \hat\nabla_{X_k}(A)$ is a locally uniformly bounded fields of operators, for every $0\leq k\leq n$ and $X_1,\cdots, X_k\in\vect{(N)}$. Similarly, we denote by $\AA^\infty_{lb}(\Gamma^\infty_1,\Gamma^\infty_2)$ the space formed by the field of operators $A$ such that $A\in \AA^n_{lb}(\Gamma^\infty_1,\Gamma^\infty_2)$, for every $n\in\mathbb N$. We define  $\AA^n_{lb}(\Gamma^\infty_1)=\AA^ n_{lb}(\Gamma^\infty_1,\Gamma^\infty_1)$ and $\AA^\infty_{lb}(\Gamma^\infty_1)=\AA^\infty_{lb}(\Gamma^\infty_1,\Gamma^\infty_1)$. 
\end{Definition}

\section{Hilbert bundles coming from smooth fields of Hilbert spaces and local trivializations}\label{section2}
The following result is well known for finite-dimensional vector bundles, but we could not find it correctly stated in the literature for the infinite-dimensional case. 

\begin{proposition}\label{bundle}
Let $N$ be a Banach manifold and $p:H\to N$ be a field of Hilbert spaces. Let $\{U_i\}_{i\in I}$ be an open cover of $N$ and $\{\Hi_i\}_{i\in I}$ a family of Hilbert spaces, and assume that for each $\lb\in U_i$, there is a unitary operator $T_i(\lb):\Hi(\lb)\to\Hi_i$. Also let $\tau_{ij}:U_i\cap U_j\to \mathcal B(\Hi_i,\Hi_j)$, the map given by $\tau_{i,j}(\lb)=T_j(\lb)T^*_i(\lb)$. The following statements are equivalent.
\begin{enumerate}
    \item[a)] There exist a Banach manifold structure on $H$ such that the open cover $\{U_i\}_{i\in I}$ together with the family of Hilbert spaces $\{\Hi_i\}_{i\in I}$ and the unitary operators $T_i(\lb)$ defines a smooth Hilbert bundle structure on $p:H\to N$.
    \item[b)] Each $\tau_{ij}$ is strongly smooth, i.e. $\tau_{ij}x\in C^\infty(U_i\cap U_j,\Hi_j)$, for each $x\in\Hi_i$.
    \item[c)] Each $\tau_{ij}$ belongs to $\AA^\infty(P^\infty(U_i\cap U_j,\Hi_i),P^\infty(U_i\cap U_j,\Hi_j))$ (see definition of $P^\infty$ in example \ref{exmp}).
 \end{enumerate}
\end{proposition}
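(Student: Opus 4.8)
The plan is to prove the two analytic statements $b)$ and $c)$ equivalent by a direct computation within the framework of subsection \ref{bsfHO}, and then to connect them to the geometric statement $a)$ through the standard fibre-bundle gluing construction, the only delicate point being the passage from strong to joint smoothness. Throughout I would exploit two structural facts about the transition maps. First, since each $T_i(\lb)$ is unitary, $\tau_{ij}(\lb)^*=(T_j(\lb)T_i^*(\lb))^*=T_i(\lb)T_j^*(\lb)=\tau_{ji}(\lb)$ and $\|\tau_{ij}(\lb)\|=1$ for every $\lb$; thus the family is uniformly bounded and its pointwise adjoint is again a transition map of the same family. Second, the cocycle identities $\tau_{ii}=\mathrm{Id}$ and $\tau_{ik}=\tau_{jk}\tau_{ij}$ hold automatically from $\tau_{ij}=T_jT_i^*$.

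For $b)\Rightarrow c)$ I would note that, over the trivial bundle on $U_i\cap U_j$, the connection $\hat\nabla$ is the strong directional derivative (Remark \ref{sder}) and $\Gamma^\infty(N)=C^\infty(U_i\cap U_j,\Hi_i)$. Strong smoothness says $\tau_{ij}x\in C^\infty$ for each $x\in\Hi_i$; since $P^\infty(U_i\cap U_j,\Hi_i)$ is spanned by sections $a\,x$ with $a\in C^\infty$ and $x\in\Hi_i$ (Example \ref{exmp}), the Leibniz rule gives $\tau_{ij}(a\,x)=a\,(\tau_{ij}x)\in C^\infty$, and iterating strong derivatives keeps everything in $C^\infty$; hence $\tau_{ij}$ maps $P^\infty$ into $C^\infty=\Gamma^\infty(N)$ to all orders. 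The adjoint requirement is identical because $\tau_{ij}^*=\tau_{ji}$ is strongly smooth by $b)$ applied to the pair $(j,i)$. Thus $\tau_{ij}\in\AA^\infty(P^\infty(U_i\cap U_j,\Hi_i),P^\infty(U_i\cap U_j,\Hi_j))$. Conversely $c)\Rightarrow b)$ is immediate: evaluating on the constant section $x\in\Hi_i\subseteq P^\infty$ recovers $\tau_{ij}x\in\Gamma^\infty(N)=C^\infty$.

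The implication $a)\Rightarrow b)$ is the easy geometric direction: if the $\mathcal T_i$ are charts of a Hilbert bundle structure, then on overlaps $\mathcal T_j\circ\mathcal T_i^{-1}(\lb,x)=(\lb,\tau_{ij}(\lb)x)$ is a diffeomorphism, and composing with the smooth inclusion $\lb\mapsto(\lb,x)$ for fixed $x$ shows $\lb\mapsto\tau_{ij}(\lb)x$ is smooth. For $b)\Rightarrow a)$ I would run the fibre-bundle construction: declare the $\mathcal T_i$ to be charts, so $H$ inherits a Banach manifold structure as soon as the transition maps $\Theta_{ij}(\lb,x)=(\lb,\tau_{ij}(\lb)x)$ are diffeomorphisms; the cocycle identities make the gluing consistent, and $p$ reads as the first projection in each chart, hence is smooth and each $\mathcal T_i$ a diffeomorphism. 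Everything therefore reduces to showing that $(\lb,x)\mapsto\tau_{ij}(\lb)x$ is jointly $C^\infty$, its inverse being of the same form with $\tau_{ji}$.

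The main obstacle is precisely this last reduction: strong smoothness only gives smoothness in $\lb$ for each fixed $x$, whereas a chart transition must be Fréchet $C^\infty$ jointly, and operator-norm regularity of $\lb\mapsto\tau_{ij}(\lb)$ need not follow from strong smoothness. I would bridge the gap by first upgrading strong smoothness to local uniform boundedness of all strong derivatives: the zeroth order is free from unitarity, and for each $k$ the map $\lb\mapsto\hat X_1\cdots\hat X_k(\tau_{ij})(\lb)x$ is continuous for every $x$, so the uniform boundedness principle (applied over a compact neighbourhood, using $\dim N<\infty$) yields $\sup_\lb\|\hat X_1\cdots\hat X_k(\tau_{ij})(\lb)\|<\infty$ locally. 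Granting this, a fundamental-theorem-of-calculus estimate controls the only dangerous remainder term $[\tau_{ij}(\lb_0+h)-\tau_{ij}(\lb_0)]k$ by $\big(\sup_s\|\hat X(\tau_{ij})(\lb_0+s)\|\big)\,\|h\|\,\|k\|=o(\|h\|+\|k\|)$, giving joint $C^1$ with derivative $D\Theta_{ij}(\lb,x)(v,w)=(v,\ \hat X(\tau_{ij})(\lb)x+\tau_{ij}(\lb)w)$ where $X(\lb)=v$; induction on the order, applying the same estimate to this derivative formula, yields joint $C^\infty$. This local-uniform-boundedness step, which is exactly what Proposition \ref{lub} isolates abstractly, is where the finite-dimensionality of $N$ and the unitarity of the $T_i$ are genuinely used; absent local compactness of $N$ the strong-to-joint upgrade would require imposing local uniform boundedness of the derivatives as an extra hypothesis.
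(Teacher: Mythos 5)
Your proposal is correct, and its skeleton is the same as the paper's: a) $\Rightarrow$ b) by composing the chart transition with the smooth inclusion $\lb\mapsto(\lb,x)$; b) $\Leftrightarrow$ c) by evaluating on constant sections, using that $P^\infty$ is generated by them and that $\tau_{ij}^*=\tau_{ji}$; and b) $\Rightarrow$ a) by declaring the maps $\mathcal T_i$ to be charts. The genuine difference is in how b) $\Rightarrow$ a) is handled. The paper disposes of it in one sentence: since $(A,x)\mapsto Ax$ is bilinear, the transition $\mathcal T_j\mathcal T_i^{-1}(\lb,x)=(\lb,\tau_{ij}(\lb)x)$ is smooth if and only if $\tau_{ij}$ is strongly smooth. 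Bilinearity, however, only settles the easy direction (joint smoothness implies strong smoothness; it would settle the converse if one already knew $\lb\mapsto\tau_{ij}(\lb)$ to be \emph{norm} smooth), so the substantive content---that strong smoothness of an operator-valued map on a finite-dimensional manifold forces joint Fr\'echet smoothness of $(\lb,x)\mapsto\tau_{ij}(\lb)x$---is left implicit; it is in essence the fact recorded in Remark \ref{NL} i), which the paper attributes to a referee and claims not to use. Your argument supplies exactly this step, and correctly: Banach--Steinhaus over compact neighbourhoods gives local uniform bounds on all strong derivatives (you should note in passing that each strong derivative $\hat X_1\cdots\hat X_k(\tau_{ij})(\lb)$ is a bounded operator to begin with, being a strong limit of difference quotients of bounded operators, again by Banach--Steinhaus), and the fundamental-theorem-of-calculus estimate then controls the cross term $[\tau_{ij}(\lb_0+h)-\tau_{ij}(\lb_0)]w$. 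One point to make explicit in the induction: joint $C^1$ in the Fr\'echet sense also requires operator-norm continuity of the derivative of the transition map, which needs norm continuity of $\tau_{ij}$ and of $\hat X(\tau_{ij})$; both follow from your same FTC bound one order up, as a local Lipschitz estimate, so the induction to joint $C^\infty$ does go through. What your route buys is a self-contained proof of the only nontrivial analytic point, together with a precise identification of where local compactness of $N$ and unitarity of the $T_i(\lb)$ are used---worth noting since the proposition is nominally stated for a Banach manifold $N$, while condition c) and the uniform-boundedness step both live in the paper's finite-dimensional framework. What the paper's phrasing buys is brevity, at the cost of concealing this upgrade.
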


The maps $\tau_{ij}$ are called the transition maps of the family $(U_i,\Hi_i,T_i)_{i\in I}$.  
\begin{proof}
By definition a) implies b). Conversely, if b) holds  we claim that the maps $\mathcal T_i:p^{-1}(U_i)\to U_i\times\Hi_i$ define charts on $H$. Indeed, $ \mathcal T_j \mathcal T_i^{-1}(\lb,x)=(\lb,T_j(\lb) T_i^*(\lb)x)=(\lb,\tau_{ij}(\lb)x)$. Since the map $\mathcal{B}(\Hi_i,\Hi_j)\times\Hi_i\ni(A,x)\to Ax\in\mathcal \Hi_j$ is bilinear, each $ \mathcal T_j \mathcal T_i^{-1}$ is smooth if and only if the map $U_i\ni\lb\to \tau_{ij}(\lb)x\in \Hi_j$ is smooth for each $x\in\Hi_i$, i.e. the maps $\mathcal T_i$ defines charts if and only if each $\tau_{ij}$ is strongly smooth. The equivalence between b) and c) follows from recalling that $P^\infty(U_i\cap U_j,\Hi)$ is the space of sections generated by the constant sections (see example \ref{exmp}) and noticing that $\tau_{ij}^*=\tau_{ji}$.  

\end{proof}
\begin{Remark}\label{NL}
{\rm
\begin{enumerate}
    \item[i)] Instead of b), in the literature the definition of Hilbert bundle sometimes requires that each $\tau_{ij}$ is norm smooth. For instance, see \cite{SL}. We thought both concepts were different. However, a referee provided us with a proof demonstrating their equivalence (i.e., strong smoothness implies norm smoothness for operator-valued functions). Nevertheless, the results of this article do not depend on that fact.
\item[ii)] Notice that if $(H\to N,\Gamma^\infty,\nabla)$ is a smooth field of Hilbert spaces and  $T_i(\Gamma^\infty(U_i))=C^\infty(U_i)$, then b) clearly holds. The other properties claimed in Theorem \ref{porpeqivcor} will be shown in the next subsection. The remainder of this subsection aims to develop techniques to achieve at least the inclusion $T_i(\Gamma^\infty(U_i))\subseteq C^\infty(U_i)$.    
\end{enumerate}
}
\end{Remark}

We will provide a forth statement equivalent to a), b) or c) in Proposition \ref{bundle}. We will replace $P^\infty(U_i\cap U_j,\Hi_i)$ in condition c) by any dense smooth submodule of $C^\infty(U_i\cap U_j,\Hi_i)$ and we will require another technical condition in terms of the derivatives of each $\tau_{ij}$ (defined by Proposition \ref{Con}; see also Remark \ref{sder}). 





Let $\Hi_1$ and $\Hi_2$ be Hilbert spaces, and $A(\lb):\Hi_1\to\Hi_2$ be a  bounded operator, for each $\lb\in N$. Clearly, $A$ and $A^*$ are strongly smooth fields of operators if and only if $A\in\AA^\infty(P^\infty(N,\Hi_1),P^\infty(N,\Hi_2))$. In such case, we also have that 






$$
X_1\cdots X_k(A x)=\hat X_1\cdots \hat X_k (A)x,
$$
for every $x\in\Hi_1$ and $X_1,\cdots, X_k\in\vect{(N)}$. Therefore, the domain of each $\hat X_1,\cdots \hat X_k (A)$ is $\Hi_1$, and since it also has a densely defined adjoint, each $\hat X_1,\cdots \hat X_k (A)$ is bounded. Moreover, the previous identity also shows that $\hat X_1,\cdots \hat X_k (A)$ is strongly continuous. The same conclusion follows for $A^*$. In other words,  each $\hat X_1,\cdots \hat X_k (A)$ belongs to $C(N,B(\Hi_1,\Hi_2)_{\ast-st})$, where $B(\Hi_1,\Hi_2)_{\ast-st}$ is the space of bounded operators endowed with the $\ast$-strong topology. The following result asserts that the latter property characterizes strongly smooth fields of operators, even if we replace $P^\infty(N,\Hi_i)$ by any dense smooth $C^\infty(N)$-submodule of $C^\infty(N,\Hi_i)$.  
\begin{proposition}\label{Der}
 Let $\tilde{\Gamma}_1^\infty$ be a dense smooth submodule of $C^\infty(N,\Hi_1)$,  $\tilde{\Gamma}_2^\infty$ be a dense smooth submodule of $C^\infty(N,\Hi_2)$ and $A$ belong to $\AA(\tilde{\Gamma}_1^\infty,\tilde{\Gamma}_2^\infty)$. $A$ is strongly smooth if and only if
 $\hat X_1,\cdots \hat X_k (A)$ belongs to $C(N,B(\Hi_1,\Hi_2)_{\ast-st})$, for every $X_1,\cdots, X_k\in\vect{(N)}$. 
\end{proposition}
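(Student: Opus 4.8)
The plan is to prove both implications using the characterization that strong smoothness of $A$ is equivalent to $A\in\AA^\infty(P^\infty(N,\Hi_1),P^\infty(N,\Hi_2))$, together with the Leibniz rule \eqref{Leib} for $\hat\nabla$ and Proposition \ref{lub}, which connects local uniform boundedness of the derivatives $\hat\nabla_{X_1}\cdots\hat\nabla_{X_k}(A)$ to extendability of $A$ to the completed section spaces $\Gamma^n(N)$. The forward direction is essentially the content of the paragraph preceding the statement: if $A$ is strongly smooth, then for every $x\in\Hi_1$ one has $X_1\cdots X_k(Ax)=\hat X_1\cdots\hat X_k(A)x$, so each $\hat X_1\cdots\hat X_k(A)$ has full domain $\Hi_1$ and a densely defined adjoint (by the analogous computation for $A^*$, using Proposition \ref{Con} iv)), hence is bounded; the identity $X_1\cdots X_k(Ax)=\hat X_1\cdots\hat X_k(A)x$ then exhibits $\hat X_1\cdots\hat X_k(A)$ as a strongly continuous field, and the same holds for the adjoint, giving membership in $C(N,B(\Hi_1,\Hi_2)_{\ast-st})$.

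For the converse, I would argue as follows. Assume $\hat X_1\cdots\hat X_k(A)\in C(N,B(\Hi_1,\Hi_2)_{\ast-st})$ for all $X_1,\dots,X_k\in\vect(N)$. Since a strongly continuous field of bounded operators on a finite-dimensional manifold is locally uniformly bounded (this follows from the uniform boundedness principle applied pointwise and continuity of $\lb\mapsto\|\hat X_1\cdots\hat X_k(A)(\lb)x\|$, or more directly from the local compactness of $N$ together with strong continuity), each derivative field $\hat\nabla_{X_1}\cdots\hat\nabla_{X_k}(A)$ is locally uniformly bounded. Therefore $A\in\AA^\infty_{lb}(\tilde\Gamma_1^\infty,\tilde\Gamma_2^\infty)$, and Proposition \ref{lub} yields $A(\tilde\Gamma_1^n(N))\subseteq\tilde\Gamma_2^n(N)$ for every $n$. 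In particular, since $\tilde\Gamma_1^\infty$ is dense in $C^\infty(N,\Hi_1)$ and contains (after taking local uniform limits) the constant sections, the constant section $\lb\mapsto x$ lies in $\tilde\Gamma_1^\infty(N)$ for each $x\in\Hi_1$; applying $A$ and using that $A$ maps into $\tilde\Gamma_2^\infty(N)\subseteq C^\infty(N,\Hi_2)$ gives $Ax\in C^\infty(N,\Hi_2)$, i.e. $A$ is strongly smooth. The same argument applied to $A^*$ (whose derivatives are controlled by $\hat\nabla_{\overline X}(A^*)$ via Proposition \ref{Con} iii)–iv)) shows $A^*$ is strongly smooth as well.

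The step I expect to require the most care is verifying that a strongly continuous (or $\ast$-strongly continuous) field of bounded operators is automatically locally uniformly bounded, so that Proposition \ref{lub} genuinely applies. Strong continuity alone does not give a uniform bound on an arbitrary topological space, but here $N$ is a finite-dimensional manifold, hence locally compact, and one can invoke the uniform boundedness principle: for fixed $x$ the map $\lb\mapsto\hat X_1\cdots\hat X_k(A)(\lb)x$ is continuous hence locally bounded, and the Banach–Steinhaus theorem upgrades pointwise local boundedness to local uniform operator-norm boundedness on a neighborhood of each point. A secondary subtlety is confirming that the constant sections belong to the completion $\tilde\Gamma_1^\infty(N)$ even though $\tilde\Gamma_1^\infty$ is only assumed dense in $C^\infty(N,\Hi_1)$ (not equal to it); this is handled by noting that density in the Fréchet topology of $C^\infty(N,\Hi_1)$ implies density in locally uniform convergence of all derivatives, so constants are recovered as limits and lie in $\tilde\Gamma_1^\infty(N)=\bigcap_n\tilde\Gamma_1^n(N)$.
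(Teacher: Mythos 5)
Your proposal is correct, and while your forward direction coincides with the paper's (both defer to the computation in the paragraph preceding the statement), your converse takes a genuinely different route. The paper argues via weak smoothness: fixing $x\in\Hi_1$ and $y\in\Hi_2$, it approximates the constant section $y$ by $f_n\in\tilde\Gamma_2^\infty$, computes $X\langle Ax,f_n\rangle=\langle\hat X(A)x,f_n\rangle+\langle Ax,\overline{X}(f_n)\rangle$, passes to the locally uniform limit to see that $\lb\mapsto\langle A(\lb)x,y\rangle$ is differentiable with derivative $\langle\hat X(A)(\lb)x,y\rangle$, and then invokes Lemma 5.1.1 of \cite{LS} (Appendix \ref{appa}) plus induction on $n$ to conclude $Ax\in C^n(N,\Hi_2)$ for all $n$; note that this only ever uses the vector-level bounds $\sup_{\lb\in C}\|A(\lb)x\|$ and $\sup_{\lb\in C}\|\hat X(A)(\lb)x\|$, which come free from strong continuity, so no Banach--Steinhaus argument is needed. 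You instead upgrade $\ast$-strong continuity to local uniform operator-norm boundedness via the uniform boundedness principle (valid, since $N$ is locally compact and $\lb\mapsto\|\hat X_1\cdots\hat X_k(A)(\lb)x\|$ is continuous, hence bounded on compacts), feed this into Proposition \ref{lub} to conclude that $A$ maps $\tilde\Gamma_1^\infty(N)$ continuously into $\tilde\Gamma_2^\infty(N)$, and finish by noting that constant sections lie in $\tilde\Gamma_1^\infty(N)$ while $\tilde\Gamma_2^\infty(N)\subseteq C^\infty(N,\Hi_2)$; both of these containments, which you rederive by hand, are in fact immediate from Remark \ref{sder} a), which states that the completion of a dense smooth submodule is exactly $C^\infty(N,\Hi_i)$, so your final paragraph could be replaced by a citation. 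As to what each approach buys: yours reuses machinery the paper has already established (Proposition \ref{lub}) and yields the stronger conclusion that $A$ extends to a continuous map of Fr\'echet spaces $C^\infty(N,\Hi_1)\to C^\infty(N,\Hi_2)$, not merely that constant sections have smooth images; the paper's is more elementary and self-contained, requiring only the weak-to-norm smoothness lemma from the appendix and no operator-norm estimates.
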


\begin{proof}
We have already shown that, if $A$ is strongly smooth field of operators, then each $\hat X_1,\cdots \hat X_k (A)\in C(N,B(\Hi_1,\Hi_2)_{\ast-st})$, for every $X_1,\cdots, X_k\in\vect{(N)}$. Conversely, let $x\in\Hi_1$. Let us show that $Ax$ belongs to $C^1(N,\Hi_2)$. Since $\hat X(A)x$ belongs to $C^0(N,\Hi_2)$, according to  Lemma 5.1.1 \cite{LS} (or appendix \ref{appa}), it is enough to show that the map $\lb\to \langle A(\lb)x,y\rangle$ is smooth and
$$
X(\langle A x,y\rangle)(\lb)=\langle \hat X(A)(\lb)x,y\rangle,
$$
for every $y\in\Hi_2$. Let $f_n \in\tilde\Gamma_2^\infty$ such that $f_n\to y$ (in the canonical topology of $C^\infty(N,\Hi_2)$). In particular, $f_n$ converges locally uniformly to $y$ and $X(f_n)$ converges locally uniformly to $0$, for every $X\in\vect{(N)}$. Since $A x$ is locally uniformly bounded, $\langle A x,f_n\rangle$ converge locally uniformly to $\langle Ax,y\rangle$. Thus, we only need to prove that $X(\langle Ax,f_n\rangle)$ converges locally uniformly to $\langle \hat X(A)x,y\rangle$. Since $A$ is a smooth field of operators,  $\langle A x,f_n\rangle$ is smooth and 
$$
X(\langle Ax,f_n\rangle)=X(\langle x,A^*f_n\rangle)=\langle x,\hat{\overline{X}}(A^*)f_n\rangle+\langle x,A^*\overline{X}(f_n)\rangle=\langle \hat X(A)x,f_n\rangle+\langle Ax,\overline{X}(f_n)\rangle.
$$
Therefore, since $A x$ and $\hat X(A) x$ are locally uniformly bounded, $X(\langle A x,f_n\rangle)$ converge locally uniformly to $\langle \hat X (A)x,y\rangle$. Following the same argument, we can show by induction that $Ax$ belongs to $C^n(N,\Hi_2)$, for every $n\in\mathbb N$, and this completes the proof.
\end{proof}
\begin{Corollary}\label{Xtau} For each $i,j\in I$, let $U_i$, $T_i(\lb):\Hi(\lb)\to\Hi_i$ and $\tau_{ij}$ as in Proposition \ref{bundle}. Also, let $\Gamma^\infty$ be a $C^\infty(N)$-module of sections of $H\to N$ and define $\tilde{\Gamma}_{ij}^\infty:=T_i(\Gamma^\infty|_{U_i\cap U_j})$. Assume that  each $\tilde{\Gamma}_{ij}^\infty$ is a dense smooth subspace in $C^\infty(U_i\cap U_j,\Hi_i)$. The field of operators $\tau_{ij}$ is strongly smooth if and only if
\begin{enumerate}
    \item[d)] $\hat X_1,\cdots \hat X_k (\tau_{ij})$ belongs to $C(U_i\cap U_j,B(\Hi_i,\Hi_j)_{\ast-st})$, for every $X_1,\cdots, X_k\in\vect{(U_i\cap U_j)}$.
    
\end{enumerate}
\end{Corollary}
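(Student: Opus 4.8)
The plan is to deduce the statement as a direct instance of Proposition \ref{Der}, applied to the single field of operators $\tau_{ij}$ over the open submanifold $U_i\cap U_j$ (with its restricted smooth structure). The dictionary is: replace $N$ by $U_i\cap U_j$, set $\Hi_1=\Hi_i$ and $\Hi_2=\Hi_j$, and take as test submodules $\tilde{\Gamma}_1^\infty=\tilde{\Gamma}_{ij}^\infty$ and $\tilde{\Gamma}_2^\infty=\tilde{\Gamma}_{ji}^\infty$. By hypothesis $\tilde{\Gamma}_{ij}^\infty$ is a dense smooth submodule of $C^\infty(U_i\cap U_j,\Hi_i)$, and interchanging the roles of $i$ and $j$ gives the analogous statement for $\tilde{\Gamma}_{ji}^\infty$ inside $C^\infty(U_i\cap U_j,\Hi_j)$; thus both submodules meet the density and smoothness requirements of Proposition \ref{Der}. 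Once I check that $\tau_{ij}$ is an admissible operator between these two submodules, the proposition delivers exactly the equivalence between strong smoothness and condition (d).

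The only thing that genuinely must be verified is the membership $\tau_{ij}\in\AA(\tilde{\Gamma}_{ij}^\infty,\tilde{\Gamma}_{ji}^\infty)$, namely that $\tau_{ij}$ sends $\tilde{\Gamma}_{ij}^\infty$ into smooth sections and that its adjoint does the same on $\tilde{\Gamma}_{ji}^\infty$. This reduces to a one-line computation exploiting the unitarity of the $T_i(\lb)$. A generic element of $\tilde{\Gamma}_{ij}^\infty$ is of the form $T_i\varphi$ with $\varphi\in\Gamma^\infty|_{U_i\cap U_j}$, and since $T_i^*(\lb)T_i(\lb)=\mathrm{id}_{\Hi(\lb)}$ we obtain
$$
\tau_{ij}(\lb)\big(T_i(\lb)\varphi(\lb)\big)=T_j(\lb)T_i^*(\lb)T_i(\lb)\varphi(\lb)=T_j(\lb)\varphi(\lb),
$$
so that $\tau_{ij}(T_i\varphi)=T_j\varphi\in\tilde{\Gamma}_{ji}^\infty\subseteq C^\infty(U_i\cap U_j,\Hi_j)$. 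Dually, $\tau_{ij}^*=(T_jT_i^*)^*=T_iT_j^*=\tau_{ji}$, and the same computation with $i$ and $j$ exchanged yields $\tau_{ji}(T_j\varphi)=T_i\varphi\in\tilde{\Gamma}_{ij}^\infty$. Hence $\tau_{ij}$ carries each test submodule into the other, both of which are smooth, which is precisely the admissibility hypothesis needed to invoke Proposition \ref{Der}.

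With these hypotheses in place, Proposition \ref{Der} immediately gives that $\tau_{ij}$ is strongly smooth if and only if each iterated derivative $\hat X_1\cdots\hat X_k(\tau_{ij})$ belongs to $C(U_i\cap U_j,B(\Hi_i,\Hi_j)_{\ast-st})$ for all $X_1,\dots,X_k\in\vect(U_i\cap U_j)$, which is statement (d). I do not anticipate a substantive obstacle: the entire content is the transfer of the hypotheses of Proposition \ref{Der} to the operator $\tau_{ij}$ on $U_i\cap U_j$. The only points demanding care are the bookkeeping of the two symmetric submodule assumptions and the identity $\tau_{ij}^*=\tau_{ji}$, both of which follow at once from the unitarity of the trivializing operators.
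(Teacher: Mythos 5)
Your proposal is correct and follows exactly the route the paper intends: Corollary \ref{Xtau} is stated there without proof precisely because it is the specialization of Proposition \ref{Der} to $A=\tau_{ij}$ over $U_i\cap U_j$ with $\tilde{\Gamma}_1^\infty=\tilde{\Gamma}_{ij}^\infty$ and $\tilde{\Gamma}_2^\infty=\tilde{\Gamma}_{ji}^\infty$. Your verification that $\tau_{ij}(T_i\varphi)=T_j\varphi$ and $\tau_{ij}^*=\tau_{ji}$, so that $\tau_{ij}\in\AA(\tilde{\Gamma}_{ij}^\infty,\tilde{\Gamma}_{ji}^\infty)$, makes explicit the admissibility hypothesis the paper leaves implicit.
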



Let $(H\to N, \Gamma^\infty,\nabla)$ be a smooth field of Hilbert spaces. The notion of smoothness of fields of operators given in Definition \ref{SFO}, and the extension of the connection to such fields of operators, allow us to analyze fields of operators of the form $\hat X(A)$ in Proposition \ref{Der} and its Corollary \ref{Xtau}. However, in order to find further conditions that guarantee that a given family of triples $(U_i,\Hi_i,T_i)_{i\in I}$ defines a Hilbert bundle structure on $H\mapsto N$ (or equivalently, that each $\tau_{ij}$ is strongly smooth), it seems more subtle to require that each $T_i$ defines a smooth field of operators. In other words, we will assume that $T_i(\Gamma^\infty|_{U_i})\subseteq C^\infty(U_i,\Hi_i)$ and that there is a dense smooth $C^\infty(U_i)$-submodule $\tilde\Gamma^\infty_i\subseteq C^\infty(U_i,\Hi_i)$ such that $T^*(\tilde\Gamma^\infty_i)\subseteq \Gamma^\infty(U_i)$. For instance, this occurs when $\tilde{\Gamma}_{i}^\infty:=T_i(\Gamma^\infty|_{U_i})$ is a dense smooth subspace of $C^\infty(U_i,\Hi_i)$ (as in the previous corollary).

Notice that if $\tilde\Gamma^\infty_i$ is a dense smooth subspace of $C^\infty(U_i,\Hi_i)$ and each $T_i$ belongs to $\AA^\infty_{lb}(\Gamma^\infty|_{U_i},\tilde \Gamma_i)$, then each $\tau_{ij}$ is strongly smooth. In fact, Proposition \ref{lub} implies that $T(\Gamma^\infty(U_i))\subseteq C^\infty(U_i,\Hi_i)$ and $T^*(C^\infty(U_i,\Hi_i))\subseteq \Gamma^\infty(U_i)$. Thus, $\tau_{ij}x=T_jT_i^*x$ belongs to $C^\infty(U_i\cap U_j,\Hi_j)$, for every $x\in\Hi_i$. Actually, under those assumptions we will prove that $H\to N$ admits a Hilbert bundle structure with a connection (see Corollary \ref{corisoc}). The following result asserts that, when  $\tilde\Gamma^\infty_i=P^\infty(U_i,\Hi_i)$, we can weaken the condition $T_i\in \AA^\infty_{lb}(\Gamma^\infty|_{U_i},\tilde \Gamma_i)$ and still obtain a Hilbert bundle structure.

\begin{proposition}\label{SF-HB}
Let $(H\to N,\Gamma^\infty,\nabla)$ be a smooth field of Hilbert spaces. Also, let $\{U_i\}_{i\in I}$ be an open cover of $N$ and $\{\Hi_i\}_{i\in I}$ a family of Hilbert spaces, and assume that for each $\lb\in U_i$, there is a unitary operator $T_i(\lb):\Hi(\lb)\to\Hi_i$. 
Also assume that $T_i(\Gamma^\infty|_{U_i})\subseteq C^\infty(U_i,\Hi_i)$ and $T_i^*x$ belongs to $\Gamma^\infty(U_i)$, for every $x\in\Hi_i$.
\begin{enumerate}
    \item[a)] $\hat\nabla_{X_1}\cdots \hat\nabla_{X_k}(T_i)$ is a field of bounded operators, for every $k\in\mathbb{N}$ and $X_1,\cdots, X_k\in\vect{(U_i)}$.
    \item[b)] $T_i(\Gamma^\infty(U_i))\subseteq C^\infty(U_i,\Hi_i)$ if and only if $\hat\nabla_{X_1}\cdots \hat\nabla_{X_k}(T_i)$ maps $\Gamma^\infty(U_i)$ into $C(U_i,\Hi_i)$, for every $k\in\mathbb{N}$ and $X_1,\cdots, X_k\in\vect{(U_i)}$.
    \item[c)] Let $A_{i}(X):=T_i^*\hat\nabla_X(T_i)$.  If  $A_{i}(X)\left(\Gamma^\infty(U_i)\right)\subseteq \Gamma^\infty(U_i)$, then $T_i(\Gamma^\infty(U_i))\subseteq C^\infty(U_i,\Hi_i)$.

\end{enumerate}
\end{proposition}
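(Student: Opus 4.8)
\emph{Setup.} The plan is to first recognise that the hypotheses turn both $T_i$ and $T_i^*$ into smooth fields of operators between the restricted field $(H|_{U_i},\Gamma^\infty|_{U_i},\nabla)$ and the trivial field $U_i\times\Hi_i$, equipped with the dense smooth submodule $P^\infty(U_i,\Hi_i)$ of Example \ref{exmp} and the trivial connection $\nabla_X=X$. Indeed, each $T_i(\lb)$ is unitary, hence everywhere defined, so condition i) of Definition \ref{SFO} holds; the hypothesis $T_i(\Gamma^\infty|_{U_i})\subseteq C^\infty(U_i,\Hi_i)$ gives the first half of condition ii), and from $T_i^*x\in\Gamma^\infty(U_i)$ for every $x\in\Hi_i$ together with the $C^\infty(U_i)$-module structure of $\Gamma^\infty(U_i)$ we obtain $T_i^*\bigl(P^\infty(U_i,\Hi_i)\bigr)\subseteq\Gamma^\infty(U_i)$, the second half. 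Thus $T_i\in\AA^\infty(\Gamma^\infty|_{U_i},P^\infty(U_i,\Hi_i))$ and $T_i^*\in\AA^\infty(P^\infty(U_i,\Hi_i),\Gamma^\infty|_{U_i})$, so by Proposition \ref{Con} every iterated derivative $\hat\nabla_{X_1}\cdots\hat\nabla_{X_k}(T_i)$ is again a smooth field of operators; in the trivial target $\hat\nabla$ is the strong derivation of Remark \ref{sder}.

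\emph{Part a).} Fix $\lb\in U_i$ and set $B=\hat\nabla_{X_1}\cdots\hat\nabla_{X_k}(T_i)$. Iterating property iii) of Proposition \ref{Con} produces the smooth field of operators $C=\hat\nabla_{\overline{X_1}}\cdots\hat\nabla_{\overline{X_k}}(T_i^*)$ with $h(B\varphi,\psi)=h(\varphi,C\psi)$ for all $\varphi\in\Gamma^\infty|_{U_i}$ and $\psi\in P^\infty(U_i,\Hi_i)$. The decisive point is that the source module of $T_i^*$ is $P^\infty(U_i,\Hi_i)$, which contains all constant sections, so $\Hi_2^\infty(\lb)=\Hi_i$ and $C(\lb)$ is defined on the whole of $\Hi_i$. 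Evaluating the pairing at $\lb$ on $u\in\Hi^\infty(\lb)$ and a constant section $\psi\equiv v$ gives $\langle B(\lb)u,v\rangle=\langle u,C(\lb)v\rangle$ for every $v\in\Hi_i$; hence $C(\lb)\subseteq B(\lb)^*$, so $B(\lb)^*$ is everywhere defined. As an adjoint it is closed, so the closed graph theorem forces $B(\lb)^*$ to be bounded, whence $C(\lb)=B(\lb)^*$ and $B(\lb)\subseteq B(\lb)^{**}$ are bounded as well.

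\emph{Part b).} I would use the Leibniz rule \eqref{Leibg}, which in the trivial target reads $X(S\varphi)=\hat\nabla_X(S)\varphi+S\nabla_X\varphi$ for any smooth field of operators $S$ and any $\varphi\in\Gamma^\infty(U_i)$. For the forward implication, if $T_i\varphi\in C^\infty(U_i,\Hi_i)$ for all $\varphi\in\Gamma^\infty(U_i)$, solving these relations expresses $\hat\nabla_{X_1}\cdots\hat\nabla_{X_k}(T_i)\varphi$ as a finite sum of terms $X_{i_1}\cdots X_{i_p}\bigl(T_i\,\nabla_{X_{j_1}}\cdots\nabla_{X_{j_q}}\varphi\bigr)$ with $\nabla_{X_{j_1}}\cdots\nabla_{X_{j_q}}\varphi\in\Gamma^\infty(U_i)$, each lying in $C^\infty(U_i,\Hi_i)\subseteq C(U_i,\Hi_i)$. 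For the converse I argue inductively on the order of differentiability: the candidate first derivative $g_X=\hat\nabla_X(T_i)\varphi+T_i\nabla_X\varphi$ is continuous by hypothesis, and combining condition ii) of Definition \ref{def} applied to $h(\varphi,T_i^*y)$ with the identities $\nabla_{\overline{X}}(T_i^*y)=\hat\nabla_{\overline{X}}(T_i^*)y$ and property iii) of Proposition \ref{Con} yields the scalar relation $X\langle T_i\varphi,y\rangle=\langle g_X,y\rangle$ for every $y\in\Hi_i$. The regularity criterion of Lemma 5.1.1 in \cite{LS} (see also Appendix \ref{appa}) then gives $T_i\varphi\in C^1(U_i,\Hi_i)$ with $X(T_i\varphi)=g_X$; applying the same argument to the smooth fields $\hat\nabla_{X_1}\cdots\hat\nabla_{X_m}(T_i)$ and iterating over $m$ upgrades this to $T_i\varphi\in C^\infty(U_i,\Hi_i)$.

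\emph{Part c).} Here the key algebraic identity is $\hat\nabla_X(T_i)=T_iA_i(X)$, obtained from $A_i(X)=T_i^*\hat\nabla_X(T_i)$ by left multiplication with $T_i$ and $T_iT_i^*=I$. A short induction on $k$, using the Leibniz rule of Proposition \ref{Leib1} in the form $\hat\nabla_{X_1}(T_iB)=T_i\bigl(A_i(X_1)B+\hat\nabla_{X_1}(B)\bigr)$, then shows $\hat\nabla_{X_1}\cdots\hat\nabla_{X_k}(T_i)=T_iB_{(k)}$, where each $B_{(k)}$ maps $\Gamma^\infty(U_i)$ into itself: both $A_i(X_1)B$ and $\hat\nabla_{X_1}(B)\varphi=\nabla_{X_1}(B\varphi)-B\nabla_{X_1}\varphi$ preserve $\Gamma^\infty(U_i)$, using the hypothesis $A_i(X)(\Gamma^\infty(U_i))\subseteq\Gamma^\infty(U_i)$ and the $\nabla$-stability of $\Gamma^\infty(U_i)$. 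Since $T_i$ is a field of contractions it is locally uniformly bounded, so Proposition \ref{lub} gives $T_i(\Gamma^\infty(U_i))\subseteq C(U_i,\Hi_i)$; hence $\hat\nabla_{X_1}\cdots\hat\nabla_{X_k}(T_i)\varphi=T_i(B_{(k)}\varphi)\in C(U_i,\Hi_i)$ for all $\varphi\in\Gamma^\infty(U_i)$ and all $k$, and part b) yields $T_i(\Gamma^\infty(U_i))\subseteq C^\infty(U_i,\Hi_i)$. The only genuinely non-formal step is the boundedness in a): the operator $B(\lb)$ is a priori only densely defined on $\Hi^\infty(\lb)$, and it is precisely the everywhere-definedness of its adjoint, forced by the presence of constant sections in $P^\infty(U_i,\Hi_i)$, that lets the closed graph theorem deliver boundedness.
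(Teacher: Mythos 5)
Your parts a) and b) are correct and follow essentially the same route as the paper: in a) you use the presence of the constant sections in $P^\infty(U_i,\Hi_i)$ to make the derivatives of $T_i^*$ everywhere defined, then closedness of adjoints and the closed graph theorem; in b) you use the Leibniz rule for the easy direction and the weak-to-norm smoothness criterion (Lemma 5.1.1 of \cite{LS}, Appendix \ref{appa}) in a simultaneous induction over all iterated derivatives for the converse. This matches the paper's argument almost verbatim.

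Part c) is where you genuinely diverge --- you reduce c) to b) by factoring $\hat\nabla_{X_1}\cdots\hat\nabla_{X_k}(T_i)=T_iB_{(k)}$ --- and this is where there is a gap. The inductive step rests on the identity $\hat\nabla_{X_1}(T_iB_{(k-1)})=T_i\bigl(A_i(X_1)B_{(k-1)}+\hat\nabla_{X_1}(B_{(k-1)})\bigr)$, which you justify by Proposition \ref{Leib1}. Neither hypothesis of Proposition \ref{Leib1} is satisfied here. Its second form requires $B_{(k-1)}$ to map the test module $\Gamma^\infty|_{U_i}$ into itself, whereas already $B_{(1)}=A_i(X_1)$ only maps into the completion $\Gamma^\infty(U_i)$: the hypothesis of c) is stability of $\Gamma^\infty(U_i)$, not of $\Gamma^\infty|_{U_i}$. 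Its first form requires $T_i:\Gamma^\infty(U_i)\to C^\infty(U_i,\Hi_i)$, which is exactly the conclusion of c). Unwinding the definitions, expanding $X_1\bigl(T_i(B_{(k-1)}\varphi)\bigr)$ presupposes that for $\psi=B_{(k-1)}\varphi\in\Gamma^\infty(U_i)$ the function $T_i\psi$ is differentiable with $X(T_i\psi)=T_i(\nabla_X+A_i(X))\psi$; this Leibniz rule on the completion is precisely what is at stake, so as written the argument is circular.

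The gap is repairable, but the repair is the paper's proof: for $\psi\in\Gamma^\infty(U_i)$ and $x\in\Hi_i$ one has $\langle T_i\psi,x\rangle=\langle\psi,T_i^*x\rangle$, and condition ii) of Definition \ref{def} (extended to $\Gamma^\infty(U_i)$) together with the bounded adjoint identity from part a) gives $X\langle T_i\psi,x\rangle=\langle T_i(\nabla_X\psi+A_i(X)\psi),x\rangle$. Since the hypothesis of c) puts $\nabla_X\psi+A_i(X)\psi$ back in $\Gamma^\infty(U_i)$, an induction on $n$ with Lemma 5.1.1 of \cite{LS} yields directly $T_i(\Gamma^\infty(U_i))\subseteq C^n(U_i,\Hi_i)$ for every $n$; no factorization of higher derivatives and no appeal to part b) is needed. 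Once you are forced to prove the completion Leibniz rule anyway, the detour through b) buys nothing, and the paper's direct one-step induction is both shorter and logically sound.
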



\begin{proof}
By definition, we have that 
$$
\hat\nabla_{X_1}\cdots \hat\nabla_{X_k}(T_i^*)x=\nabla_{X_1}\cdots \nabla_{X_k}(T_i^*x),
$$
for every $k\in\mathbb{N}$ and $X_1,\cdots X_k\in\vect{(U_i)}$. In particular, the domain $\hat\nabla_{X_1}\cdots \hat\nabla_{X_k}(T_i^*)(\lb)$ is $\Hi_i$, for every $\lb\in U_i$. Proposition \ref{Con} v) and condition iii) in Definition \ref{def} implies that $[\hat\nabla_{X_1}\cdots \hat\nabla_{X_k}(T_i^*)]^*$ is densely defined. The closed graph Theorem and Proposition \ref{Con} v) imply that $\hat\nabla_{X_1}\cdots \hat\nabla_{X_k}(T_i^*)$ is bounded and
$$
\left[\hat\nabla_{X_1}\cdots \hat\nabla_{X_k}(T_i^*)\right]^*=\hat\nabla_{\overline{X_i}}\cdots \hat\nabla_{\overline{X_k}}(T_i), 
$$
for every $k\in\mathbb{N}$ and $X_1,\cdots X_k\in\vect{(U_i)}$. In particular, a) follows.

In order to show b), assume that $\hat\nabla_{X_1}\cdots \hat\nabla_{X_k}(T_i)$ maps $\Gamma^\infty(U_i)$ into $C(U_i,\Hi_i)$, for every $k\in\mathbb{N}$ and $X_1,\cdots, X_k\in\vect{(U_i)}$. It is enough to prove by induction in $n$ that $\hat\nabla_{X_1}\cdots \hat\nabla_{X_k}(T_i)\left(\Gamma^\infty(U_i)\right)\subseteq C^n(U_i,\Hi_i)$, for every $k\in\mathbb{N}\cup\{0\}$, $X_1,\cdots X_k\in \vect{(U_i)}$ and $n\in\mathbb N$. The case $n=0$ and $k\in\mathbb N$ is precisely our initial assumption. Since $T_i$ is unitary, $T_i\left(\Gamma^0(U_i)\right)\subseteq C(U_i,\Hi_i)$ and the case $k=0$ and $n=0$ follows. Assume that $\hat\nabla_{X_1}\cdots \hat\nabla_{X_k}(T_i)\left(\Gamma^\infty(U_i)\right)\subseteq C^{n-1}(U_i,\Hi_i)$, for every $k\in\mathbb{N}\cup\{0\}$, $X_1,\cdots X_k\in \vect{(U_i)}$. If $\varphi\in\Gamma^\infty(U_i)$, then the map $\langle \hat\nabla_{X_1}\cdots \hat\nabla_{X_k}(T_i)\varphi,x\rangle=\langle \varphi,\hat\nabla_{\overline{X_1}}\cdots \hat\nabla_{\overline{X_k}}(T_i^*)x\rangle$ belongs to $C^\infty(U_i)$, for every $x\in\Hi_i$, and we have that
$$
X\left(\langle\hat\nabla_{X_1}\cdots \hat\nabla_{X_k}(T_i)\varphi,x\rangle\right)=\langle\nabla_X(\varphi),\hat\nabla_{\overline{X_1}}\cdots \hat\nabla_{\overline{X_k}}(T_i^*)x\rangle+  \langle\varphi,\nabla_{\overline X}(\hat\nabla_{\overline{X_1}}\cdots \hat\nabla_{\overline{X_k}}(T_i^*)x)\rangle
$$
$$
=\langle\hat\nabla_{X_1}\cdots \hat\nabla_{X_k}(T_i)\nabla_X(\varphi)+\hat\nabla_X\hat\nabla_{X_1}\cdots \hat\nabla_{X_k}(T_i)\varphi,x\rangle.
$$
Since $\hat\nabla_{X_1}\cdots \hat\nabla_{X_k}(T_i)\nabla_X(\varphi)+\hat\nabla_X\hat\nabla_{X_1}\cdots \hat\nabla_{X_k}(T_i)\varphi\in C^{n-1}(U_i,\Hi_i)$,  \cite[Lemma 5.1.1 ]{LS} (or appendix \ref{appa}) implies that, for each $k\in\mathbb{N}\cup\{0\}$ and $X_1,\cdots X_k\in \vect{(U_i)}$, $\hat\nabla_{X_1}\cdots \hat\nabla_{X_k}(T_i)\left(\Gamma^\infty(U_i)\right)\subseteq C^{n}(U_i,\Hi_i)$. Therefore, $T_i(\Gamma^\infty(U_i))\subseteq C^\infty(U_i,\Hi_i)$. The converse statement in b) is trivial.

We follow a similar but simpler argument to show c). Assume that $A_{i}(X)(\Gamma^\infty(U_i))\subseteq \Gamma^\infty(U_i)$. It is enough to prove by induction that $T_i\left(\Gamma^\infty(U_i)\right)\subseteq C^n(U_i,\Hi_i)$, for every $n\in\mathbb N$. We showed the case $n=0$ during the proof of b). Assume that $T_i\left(\Gamma^\infty(U_i)\right)\subseteq C^{n-1}(U_i,\Hi_i)$. If $\varphi\in\Gamma^\infty(U_i)$, then the map $\langle T_i\varphi,x\rangle=\langle \varphi,T_i^*x\rangle$ belongs to $C^\infty(U_i)$, for every $x\in\Hi_i$, and we have that 
$$
X(\langle T_i\varphi,x\rangle)=\langle \nabla_X(\varphi),T^*_ix\rangle+\langle \varphi,\nabla_{\overline{X}}(T_i^* x)\rangle=\langle T_i \nabla_X(\varphi)+ \hat\nabla_X(T_i)\varphi,x\rangle=\langle T_i(\nabla_X(\varphi)+ A_{i}(X)(\varphi)),x\rangle
$$
Since $ \nabla_X(\varphi)+ A_{i}(X)(\varphi)\in C^{n-1}(U_i,\Hi_i)$, Lemma 5.1.1  \cite{LS} (or appendix \ref{appa}) implies that $T_i\left(\Gamma^\infty(U_i)\right)\subseteq C^{n}(U_i,\Hi_i)$. Therefore, $T_i(\Gamma^\infty(U_i))\subseteq C^\infty(U_i,\Hi_i)$.
\end{proof}
According to the discussion and results so far in this section, we propose the following notion of  trivialization, which is meant to insure the existence of a Hilbert bundle structure such that $\Gamma^\infty(N)\subseteq\Gamma^\infty(N,H)$. 
\begin{Definition}\label{wLT}
Let $(H\to N,\Gamma^\infty,\nabla)$ be a field of Hilbert spaces. Also let $\{U_i\}_{i\in I}$ be an open cover of $N$ and $\{\Hi_i\}_{i\in I}$ a family of Hilbert spaces, and assume that for each $\lb\in U_i$, there is a unitary operator $T_i(\lb):\Hi(\lb)\to\Hi_i$. We say that $(U_i,\Hi_i,T_i)_{i\in I}$ is a weak smooth local trivialization of $(H\to N,\Gamma^\infty,\nabla)$ if for each $i\in I$ and $x\in\Hi_i$, $T_i^*x$ belongs to $\Gamma^\infty(U_i)$ and $T_i\left(\Gamma^\infty(U_i)\right)\subseteq C^\infty(U_i,\Hi_i)$.
\end{Definition}
It is also important to be able to determine when two different smooth fields of Hilbert spaces induce the same Hilbert bundle structure. The proof of the following result is straightforward. 

\begin{proposition}\label{semiunique}
Let $(\Gamma^\infty_1,\nabla^1)$ and $(\Gamma^\infty_2,\nabla^2)$ be two smooth field of Hilbert spaces structures on $H\mapsto N$ admitting weak smooth local trivializations. If $\Gamma^\infty_1(N)=\Gamma_2^\infty(N)$, then the corresponding Hilbert bundles structures are equivalent.  
\end{proposition}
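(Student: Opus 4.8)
The plan is to show that the identity map $\mathrm{id}_H$ is an isomorphism between the two induced Hilbert bundle structures; since the fibres and their inner products are the same for both, this is exactly the assertion that the structures are equivalent. Write $(U_i,\Hi_i,T_i)_{i\in I}$ and $(V_a,\mathcal K_a,S_a)_{a\in A}$ for the weak smooth local trivializations of $(\Gamma_1^\infty,\nabla^1)$ and $(\Gamma_2^\infty,\nabla^2)$, and let $\mathcal T_i$, $\mathcal S_a$ denote the associated charts on $H$. By the discussion preceding Definition \ref{wLT} (through Propositions \ref{SF-HB} and \ref{bundle}) each family is already a smooth Hilbert bundle atlas, so it remains only to check that the two atlases are mutually compatible: the union $\{\mathcal T_i\}\cup\{\mathcal S_a\}$ is a single smooth atlas precisely when every cross transition map $\mathcal S_a\mathcal T_i^{-1}$ and $\mathcal T_i\mathcal S_a^{-1}$ is smooth. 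Exactly as in the proof of Proposition \ref{bundle}, since the evaluation $(A,x)\mapsto Ax$ is bilinear, this reduces to strong smoothness over $W_{ia}:=U_i\cap V_a$ of the operator fields $\sigma_{ia}(\lb):=S_a(\lb)T_i^*(\lb)$ and $\sigma_{ai}(\lb):=T_i(\lb)S_a^*(\lb)$.

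The heart of the matter is to convert the global hypothesis $\Gamma_1^\infty(N)=\Gamma_2^\infty(N)$ into the local information these cross transition maps require, and for this I would use a bump-function localization. Fix $x\in\Hi_i$ and set $\psi:=T_i^*x\in\Gamma_1^\infty(U_i)$ (this membership is part of Definition \ref{wLT}). To prove $\sigma_{ia}x=S_a\psi\in C^\infty(W_{ia},\mathcal K_a)$ it suffices, smoothness being local, to verify it near an arbitrary $\lb_0\in W_{ia}$. Choose $\chi\in C_c^\infty(U_i)$ with $\chi\equiv 1$ on a neighbourhood of $\lb_0$. Since $\Gamma_1^\infty(U_i)$ is a module over $C^\infty(U_i)$ we have $\chi\psi\in\Gamma_1^\infty(U_i)$, and because $\chi$ is compactly supported in $U_i$ its zero-extension satisfies $\widetilde{\chi\psi}\in\Gamma_1^\infty(N)$: approximating $\psi$ in $\Gamma_1^\infty(U_i)$ by restrictions of global test sections $\Phi_k\in\Gamma_1^\infty$, the products $\chi\Phi_k\in\Gamma_1^\infty$ converge, together with all their iterated covariant derivatives, uniformly on $N$ to $\widetilde{\chi\psi}$, the convergence being global because everything vanishes off $\operatorname{supp}\chi$.

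Now the hypothesis gives $\widetilde{\chi\psi}\in\Gamma_1^\infty(N)=\Gamma_2^\infty(N)$, and restricting a global completed section to the open set $V_a$ keeps it in $\Gamma_2^\infty(V_a)$. Applying the defining property $S_a(\Gamma_2^\infty(V_a))\subseteq C^\infty(V_a,\mathcal K_a)$ of the second trivialization, we get $S_a(\widetilde{\chi\psi}|_{V_a})\in C^\infty(V_a,\mathcal K_a)$; and on the neighbourhood of $\lb_0$ where $\chi\equiv 1$ this function coincides with $S_a\psi=\sigma_{ia}x$. Hence $\sigma_{ia}x$ is smooth near $\lb_0$, and as $\lb_0$ was arbitrary, $\sigma_{ia}$ is strongly smooth. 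Interchanging the roles of the two structures yields the strong smoothness of $\sigma_{ai}$ in the same way, so all cross transition maps are smooth, the union of the two atlases is a smooth atlas, and the two Hilbert bundle structures coincide.

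I expect the only genuine obstacle to be this localization step, namely confirming that a compactly supported element of $\Gamma_1^\infty(U_i)$ extends by zero into $\Gamma_1^\infty(N)$ and that restriction of completed sections to open subsets is well behaved; these are the ``sheaf-type'' properties of the assignment $U\mapsto\Gamma^\infty(U)$, routine from its definition as locally uniform limits but precisely what couples the global hypothesis to the local comparison. Everything else is the bilinearity bookkeeping already carried out in Proposition \ref{bundle}.
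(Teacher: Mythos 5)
The paper offers no written proof of Proposition \ref{semiunique} (it is simply declared straightforward), and your argument is precisely the natural one it alludes to: reduce equivalence of the two atlases, via the bilinearity argument of Proposition \ref{bundle}, to strong smoothness of the cross transition maps $S_aT_i^*$ and $T_iS_a^*$, and deduce that from the two halves of Definition \ref{wLT} applied on either side, coupled through the hypothesis $\Gamma_1^\infty(N)=\Gamma_2^\infty(N)$. Your cut-off/zero-extension step is sound --- a compactly supported element of $\Gamma_1^\infty(U_i)$ does extend by zero into $\Gamma_1^\infty(N)$ (approximate by $\chi\Phi_k$ with $\Phi_k\in\Gamma_1^\infty$, all covariant derivatives vanishing off $\operatorname{supp}\chi$, and use locality of $\nabla$), and completed sections do restrict into $\Gamma_2^\infty(V_a)$ --- and you correctly identify this localization as the only point requiring care, so the proof is correct.
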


\subsection{Hilbert bundles with a connection}\label{Hb}
We seek to establish conditions under which $H\to N$  admits a Hilbert bundle structure equipped with a connection. Let $(H\to N, \Gamma^\infty,\nabla)$ be a smooth field of Hilbert spaces. Also, let $\{U_i\}_{i\in I}$ be an open cover of $N$ and $\{\Hi_i\}_{i\in I}$ a family of Hilbert spaces, and assume that for each $\lb\in U_i$, there is a unitary operator $T_i(\lb):\Hi(\lb)\to\Hi_i$. For simplicity, assume that $\tilde \Gamma^\infty_i=T_i(\Gamma^\infty|_{U_i})$ is a dense smooth subspace of $ C^\infty(U_i,\Hi_i)$. Let us introduce the smooth field of operators $\tilde{A}_{i}(X):=-\hat\nabla_X(T_i)T_i^*:\tilde{\Gamma}^\infty_i\to \tilde \Gamma^\infty_i$.  Proposition \ref{Con} and some straightforward computations show the following identities:


$$
T_i\nabla_X=X T_i+ \tilde{A}_{i}(X) T_i.
$$
\begin{equation}\label{Atau}
  \hat X(\tau_{ij})f=(\tau_{ij}\tilde{A}_{i}(X)-\tilde{A}_{i}(X)\tau_{ij})f
\end{equation}
\begin{equation}\label{opcon}
   \tilde{A}_{i}(X+Y)=\tilde{A}_{i}(X) +\tilde{A}_{i}(Y),\qquad  \tilde{A}_{i}(aX)=a\tilde{A}_{i}(X). 
\end{equation}
\begin{equation}\label{derin}
   \langle \tilde{A}_{i}(X) f,g\rangle=\langle f,-\tilde{A}_{i}(\overline{X})g\rangle,  
\end{equation}
for every $f,g\in\tilde{\Gamma}^\infty_i$, $a\in C^\infty(U_i)$ and $X,Y\in\vect{(U_i)}$. Notice that if we define $A_{i}(X)=T_i^*\hat\nabla_X(T_i)$ (as in Proposition \ref{SF-HB}), then $A_{i}(X)=-T_i^*\tilde{A}_{i}(X)T_i$ on $\Gamma^\infty|_{U_i}$. Moreover, the family of fields of operators $A_{i}(X)$ also satisfies equations \eqref{opcon} and \eqref{derin} replacing $f,g$ for $\varphi,\psi\in\Gamma^\infty|_{U_i}$. 

Without assuming that $T_i(\Gamma^\infty|_{U_i})$ is a dense smooth subspace of $ C^\infty(U_i,\Hi_i)$, and instead we require that there is a dense smooth submodule $\tilde\Gamma^\infty_i$ of $ C^\infty(U_i,\Hi_i)$ such that $T_i\in \AA^\infty(\Gamma^\infty|_{U_i}, \tilde\Gamma^\infty_i)$, then to define $\tilde{A}_i(X)$ we would also need to assume that $T_i(\Gamma^\infty(U_i))\subseteq C^\infty(U_i,\Hi_i)$. 

\begin{proposition}
Let $(H\to N,\Gamma^\infty,\nabla)$ be a smooth field of Hilbert spaces. Also, let $\{U_i\}_{i\in I}$, $\{\Hi_i\}_{i\in I}$ and $T_i(\lb):\Hi(\lb)\to\Hi_i$ define Hilbert bundle structure on $H\to N$ such that $\Gamma^\infty(U_i)\subseteq\Gamma^\infty(U_i,H|_{U_i})$. Assume that there is a dense smooth subspace $\tilde\Gamma^\infty_i\subseteq C^\infty(U_i,\Hi_i)$ such that $T_i\in \AA^\infty(\Gamma^\infty|_{U_i}, \tilde\Gamma^\infty_i)$, for each $i\in I$.  The following statement are equivalent:
\begin{enumerate}
    \item[a)] $H\to N$ admits a Hilbert bundle structure with connection $\tilde\nabla$ such that $\tilde\nabla|_{\Gamma^\infty}=\nabla$.
    \item[b)] $\tilde{A}_{i}(X)=-\hat\nabla_X(T_i)T_i^*:\tilde{\Gamma}_i^\infty\mapsto C^\infty(U_i,\Hi_i)$ is a field of bounded operators such that $\tilde{A}_{i}(X)\left[C^\infty(U_i,\Hi_i)\right]\subseteq C^\infty(U_i,\Hi_i)$ and equations \eqref{Atau}, \eqref{opcon} and \eqref{derin} hold true, for every $i\in I$, $a\in C^\infty(U_i)$, $f,g\in C^\infty(U_i,\Hi_i)$ and $X,Y\in\vect{(U_i)}$.
\end{enumerate}
\end{proposition}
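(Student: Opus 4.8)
The plan is to move back and forth between the connection $\tilde\nabla$ on the Hilbert bundle and its local connection forms $\tilde A_i(X)$ relative to the trivializations $T_i$, using the identity $T_i\nabla_X=XT_i+\tilde A_i(X)T_i$ on $\Gamma^\infty|_{U_i}$ (which holds by the very definition $\tilde A_i(X)=-\hat\nabla_X(T_i)T_i^*$ together with the Leibniz rule \eqref{Leib} for $\hat\nabla$). Since the given bundle structure identifies a smooth section $\varphi\in\Gamma^\infty(U_i,H|_{U_i})$ with the condition $T_i\varphi\in C^\infty(U_i,\Hi_i)$, the natural candidate is, on each $U_i$,
$$
T_i\tilde\nabla_X\varphi:=X(T_i\varphi)+\tilde A_i(X)(T_i\varphi).
$$
Everything then reduces to checking that this local recipe is consistent across charts, lands in the smooth sections, and verifies the axioms of Definition \ref{HeB}.

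For b)$\Rightarrow$a), I would define $\tilde\nabla_X\varphi$ by the displayed formula. Because $X(T_i\varphi)\in C^\infty(U_i,\Hi_i)$ and $\tilde A_i(X)$ preserves $C^\infty(U_i,\Hi_i)$ by hypothesis, the section $\tilde\nabla_X\varphi$ is again smooth. The crucial point is well-definedness on overlaps: writing $f=T_i\varphi$ and $g=T_j\varphi=\tau_{ij}f$ and expanding $X(\tau_{ij}f)=\hat X(\tau_{ij})f+\tau_{ij}X(f)$ (the Leibniz rule of Remark \ref{sder}), the two local definitions agree exactly when $\hat X(\tau_{ij})=\tau_{ij}\tilde A_i(X)-\tilde A_j(X)\tau_{ij}$, i.e. equation \eqref{Atau}, which is precisely the transformation law of a connection form under the change of trivialization $\tau_{ij}$. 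Axiom i) of Definition \ref{HeB} follows from \eqref{opcon} combined with the ordinary Leibniz rule for $X$ and the fact that each $\tilde A_i(X)(\lb)$ acts fiberwise, hence is $C^\infty(U_i)$-linear. For axiom ii), $h(\varphi,\psi)=\langle T_i\varphi,T_i\psi\rangle$ is smooth because $T_i$ is unitary and the bundle structure makes $T_i\varphi,T_i\psi$ smooth; differentiating this pairing and substituting the formula for $\tilde\nabla$ reduces metric compatibility to the skew-adjointness relation \eqref{derin}. Finally $\tilde\nabla|_{\Gamma^\infty}=\nabla$ is immediate from $T_i\nabla_X=XT_i+\tilde A_i(X)T_i$.

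For a)$\Rightarrow$b), I would conversely read $\tilde A_i(X)$ off a given connection by setting $\tilde A_i(X)f:=T_i\tilde\nabla_X(T_i^*f)-Xf$ for $f\in C^\infty(U_i,\Hi_i)$. Since $\tilde\nabla$ maps smooth sections to smooth sections, $\tilde A_i(X)$ maps $C^\infty(U_i,\Hi_i)$ into itself, and the Leibniz rule for $\tilde\nabla$ shows it is $C^\infty(U_i)$-linear, hence given by a field of operators; its agreement with $-\hat\nabla_X(T_i)T_i^*$ on $\tilde\Gamma_i^\infty$ follows from $\tilde\nabla|_{\Gamma^\infty}=\nabla$. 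Equations \eqref{opcon}, \eqref{derin} and \eqref{Atau} then encode, respectively, additivity and tensoriality of $\tilde\nabla$ in the vector-field argument, its metric compatibility (axiom ii of Definition \ref{HeB}), and compatibility of the local connections on overlaps, all obtained by translating the axioms through the unitaries $T_i$ as in the previous paragraph.

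The step I expect to be the main obstacle is the boundedness of each $\tilde A_i(X)(\lb)$, needed in a)$\Rightarrow$b). I would extract it from \eqref{derin}: evaluated on constant sections $f\equiv x$, $g\equiv y$ it gives $\langle\tilde A_i(X)(\lb)x,y\rangle=\langle x,-\tilde A_i(\overline X)(\lb)y\rangle$, so the everywhere-defined operator $\tilde A_i(X)(\lb)$ admits the everywhere-defined formal adjoint $-\tilde A_i(\overline X)(\lb)$ and is therefore bounded by the Hellinger--Toeplitz (closed graph) theorem. This is exactly the point where one must have $\tilde A_i(X)$ defined on constants, that is on all of $C^\infty(U_i,\Hi_i)$, rather than merely on the dense submodule $\tilde\Gamma_i^\infty$; this is why condition b) is formulated over $C^\infty(U_i,\Hi_i)$, and why in a)$\Rightarrow$b) one first extends $\tilde A_i(X)$ using that $\tilde\nabla$ preserves all smooth sections.
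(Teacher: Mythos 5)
Your proposal is correct and takes essentially the same approach as the paper: for b)$\Rightarrow$a) you define $\tilde\nabla$ locally by $T_i\tilde\nabla_X=XT_i+\tilde A_i(X)T_i$ and invoke \eqref{Atau} for consistency on overlaps and \eqref{opcon}, \eqref{derin} for the connection axioms, which is exactly the paper's construction $\tilde\nabla_X(\varphi)|_{U_i}=T_i^*(X+\tilde A_i(X))T_i(\varphi|_{U_i})$; for a)$\Rightarrow$b), reading $\tilde A_i(X)$ off the connection via $\tilde A_i(X)f=T_i\tilde\nabla_X(T_i^*f)-Xf$ is precisely what the paper means by ``taking $\Gamma^\infty=\Gamma^\infty(N,H)$ in the definition of $\tilde A_i(X)$'', so that $\tilde\Gamma_i^\infty$ becomes $C^\infty(U_i,\Hi_i)$. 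Your explicit closed-graph argument for fiberwise boundedness, and your corrected form of the overlap relation $\hat X(\tau_{ij})=\tau_{ij}\tilde A_i(X)-\tilde A_j(X)\tau_{ij}$ (the paper's equation \eqref{Atau} has an index typo, writing $\tilde A_i$ in both terms), only make explicit what the paper's terse proof leaves implicit.
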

\begin{proof}
If we assume a), then b) follows from taking $\Gamma^\infty=\Gamma^\infty(N,H)$ in the definition of $\tilde{A}_{i}(X)$ (so $\tilde{\Gamma}^\infty_i=C^\infty(U_i,\Hi_i)$). Conversely, let us assume b). Since $H|_{U_i}$ is diffeomorphic with $U_i\times \Hi_i$, we have that $T_i(\Gamma^\infty(U_i,H|_{U_i}))=C^\infty(U_i,\Hi_i)$, hence $T_i(\Gamma^\infty(U_i))\subseteq C^\infty(U_i,\Hi_i)$ and then $\tilde A_i(X)$ is well defined. Moreover, for $\varphi\in \Gamma^\infty(N,H)$ we can define
$$
\tilde\nabla_X(\varphi)|_{U_i}:=T_i^*(X+\tilde{A}_{i}(X))T_i(\varphi|_{U_i}).
$$
Equation \eqref{Atau} implies that $\tilde\nabla_X(\varphi)$ is well defined on $N$. Equations \eqref{opcon} and \eqref{derin} imply conditions i) and ii) in the definition of a connection. Thus, $\tilde\nabla$ defines the required connection and this finishes the proof.
\end{proof}

\begin{proof}[Proof of Theorem \ref{porpeqivcor}]
If $T_i(\Gamma^\infty(U_i))=C^\infty(U_i,\Hi_i)$, then clearly $\tau_{ij}$ is strongly smooth and condition b) of Proposition \ref{bundle} implies that $(U_i,\Hi_i,T_i)$ defines a Hilbert bundle structure on $H\to N$. Moreover, by construction $H|_{U_i}$ is diffeomorphic to $U_i\times\Hi_i$; then $T_i(\Gamma^\infty(U_i,H|_{U_i}))=C^\infty(U_i,\Hi_i)$. Since $T_i$ is bijective,  $\Gamma^\infty(U_i,H|_{U_i})=\Gamma^\infty(U_i)$. Clearly condition b) in the previous Proposition holds, therefore $(U_i,\Hi_i,T_i)$ defines a Hilbert bundle structure on $H\to N$ with a connection satisfying the required conditions. Conversely, if $(U_i,\Hi_i,T_i)$ defines a Hilbert bundle structure on $H\to N$ with a connection satisfying the required conditions, then  $T_i(\Gamma^\infty(U_i,H|_{U_i}))=C^\infty(U_i,\Hi_i)$ and therefore $T_i(\Gamma^\infty(U_i))=C^\infty(U_i,\Hi_i)$. Assume that any of the previous equivalent conditions hold. If $\varphi\in\Gamma^\infty(N)$, then clearly $\varphi|_{U_i}\in\Gamma^\infty(U_i)=\Gamma^\infty(U_i,H|_{U_i})$, hence $\varphi\in\Gamma^\infty(N,H)$. Similarly, let $\varphi\in\Gamma^\infty(N,H)$ and $\{e_i\}_{i\in I}$ a partition of unity subordinate to $\{U_i\}_{i\in I}$. Then $\varphi|_{U_i}\in\Gamma^\infty(U_i)$ and there is a sequence $\varphi_{k}^{i}$ such that $\varphi_k^{i}\to\varphi|_{U_i}$ uniformly on each compact subset of $U_i$. By definition of a partition of unity, the sequence $\varphi_k=\sum_i e_i\varphi^{i}_k$ is well defined and $\varphi_k\to \varphi$. Thus, $\varphi\in\Gamma^0(N)$. The same argument shows that $\varphi\in\Gamma^1(N)$. By induction is straightforward to show that $\varphi\in\Gamma^n(N)$, for every $n\in\mathbb N$. Therefore, $\Gamma^\infty(N)=\Gamma^\infty(N,H)$.            
\end{proof}


\begin{Corollary}\label{corisoc}
Let $(H\to N,\Gamma^\infty,\nabla)$ be a smooth field of Hilbert spaces. Also let $\{U_i\}_{i\in I}$ be an open cover of $N$ and $\{\Hi_i\}_{i\in I}$ a family of Hilbert spaces, and assume that for each $\lb\in U_i$, there is a unitary operator $T_i(\lb):\Hi(\lb)\to\Hi_i$. Also assume that there is a dense smooth subspace $\tilde\Gamma^\infty_i\subseteq C^\infty(U_i,\Hi_i)$ such that $T_i\in\AA^\infty(\Gamma^\infty|_{U_i},\tilde\Gamma^\infty_i)$, for each $i\in I$. If  $T_i\in \AA^\infty_{lb}(\Gamma^\infty|_{U_i},\tilde{\Gamma}^\infty_i)$, then $T_i$ defines an isomorphism of Fr\'echet spaces between $\Gamma^\infty(U_i)$ and $C^\infty(U_i,\Hi_i)$. In particular, in such case $H\mapsto N$ admits a Hilbert bundle structure with a connection and $\Gamma^\infty(N)=\Gamma^\infty(N,H)$. If $\tilde\Gamma^\infty_i=T_i(\Gamma^\infty|_{U_i})$ is a dense smooth subspace of $ C^\infty(U_i,\Hi_i)$ and $\tilde A^{i}_X\in \AA^\infty_{lb}(\tilde\Gamma^\infty_i)$ for every $X\in\vect{(U_i)}$, then $T_i\in \AA^\infty_{lb}(\Gamma^\infty|_{U_i},\tilde{\Gamma}^\infty_i)$.
\end{Corollary}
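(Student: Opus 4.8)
The plan is to deduce the entire statement from Proposition~\ref{lub} and Theorem~\ref{porpeqivcor}, exploiting the fiberwise unitarity of the $T_i(\lb)$ to pass freely between $T_i$ and $T_i^*$. Throughout I work over the open set $U_i$, where the restricted structure is a genuine smooth field of Hilbert spaces by the Remark following the definition of $\Gamma^n(U)$, so all earlier results apply verbatim. For the isomorphism claim I would first apply Proposition~\ref{lub} to $T_i\in\AA^\infty_{lb}(\Gamma^\infty|_{U_i},\tilde\Gamma^\infty_i)$. Since $\tilde\Gamma^\infty_i$ is a dense smooth $C^\infty(U_i)$-submodule of $C^\infty(U_i,\Hi_i)$ with the trivial connection $\nabla_X=X$, Remark~\ref{sder}~a) shows that the completion of $\tilde\Gamma^\infty_i$ relevant to Proposition~\ref{lub} is the full space $C^\infty(U_i,\Hi_i)$; hence Proposition~\ref{lub} produces a continuous linear map $T_i:\Gamma^\infty(U_i)\to C^\infty(U_i,\Hi_i)$.

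For the reverse direction I would verify that $T_i^*\in\AA^\infty_{lb}(\tilde\Gamma^\infty_i,\Gamma^\infty|_{U_i})$. Membership in $\AA^\infty$ is immediate because the two inclusions of Definition~\ref{SFO} required for $T_i^*$ are exactly those for $T_i$ read in reverse, while local uniform boundedness follows from the fiberwise identity $[\hat\nabla_{X_1}\cdots\hat\nabla_{X_k}(T_i^*)]^*=\hat\nabla_{\overline{X_1}}\cdots\hat\nabla_{\overline{X_k}}(T_i)$ established in the proof of Proposition~\ref{SF-HB}~a), together with the invariance of the operator norm under taking adjoints. A second application of Proposition~\ref{lub} then yields a continuous map $T_i^*:C^\infty(U_i,\Hi_i)\to\Gamma^\infty(U_i)$. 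Because each $T_i(\lb)$ is unitary, $T_iT_i^*$ and $T_i^*T_i$ act as the identity on the two section spaces, so the two continuous maps are mutually inverse and $T_i$ is an isomorphism of Fréchet spaces. In particular $T_i(\Gamma^\infty(U_i))=C^\infty(U_i,\Hi_i)$ for every $i$, which is precisely the hypothesis of Theorem~\ref{porpeqivcor}; invoking that theorem delivers the Hilbert bundle structure with a connection $\tilde\nabla$ extending $\nabla$ and the equality $\Gamma^\infty(N)=\Gamma^\infty(N,H)$.

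For the final assertion I would begin from the identity $\hat\nabla_X(T_i)=-\tilde{A}_i(X)T_i$, which is just the definition $\tilde{A}_i(X)=-\hat\nabla_X(T_i)T_i^*$ rearranged via $T_i^*T_i=\mathrm{id}$. Differentiating repeatedly and applying the Leibniz rule of Proposition~\ref{Leib1}, a routine induction shows that $\hat\nabla_{X_1}\cdots\hat\nabla_{X_k}(T_i)=B_kT_i$, where $B_k$ is a finite sum of compositions of iterated derivatives of the form $\hat\nabla_{X_{i_1}}\cdots\hat\nabla_{X_{i_m}}(\tilde{A}_i(X_j))$. Each such factor is locally uniformly bounded by the hypothesis $\tilde{A}_i(X)\in\AA^\infty_{lb}(\tilde\Gamma^\infty_i)$, a finite product of locally uniformly bounded fields of bounded operators is again locally uniformly bounded, and $T_i$ is fiberwise of norm one; hence every $\hat\nabla_{X_1}\cdots\hat\nabla_{X_k}(T_i)$ is locally uniformly bounded, i.e. $T_i\in\AA^\infty_{lb}(\Gamma^\infty|_{U_i},\tilde\Gamma^\infty_i)$, as required.

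I expect the combinatorial induction of the last part to be the most mechanical step, presenting no real difficulty once the product expansion $B_kT_i$ is set up. The genuinely delicate points are the two bookkeeping facts behind the isomorphism: that the completion of the model submodule $\tilde\Gamma^\infty_i$ is the full space $C^\infty(U_i,\Hi_i)$, so that Proposition~\ref{lub} lands exactly where we want, and the transfer of local uniform boundedness to $T_i^*$ through the fiberwise adjoint identity of Proposition~\ref{SF-HB}~a). With these in hand, the unitarity of $T_i$ makes the inverse map automatic, and no further analytic estimate beyond verifying domains is needed.
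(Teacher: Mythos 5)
Your proposal is correct and follows essentially the same route as the paper's proof: apply Proposition \ref{lub} in both directions (using density of $\tilde\Gamma^\infty_i$ to identify its completion with $C^\infty(U_i,\Hi_i)$, and the adjoint identity to transfer local uniform boundedness to $T_i^*$), conclude via $T_iT_i^*=I$ and Theorem \ref{porpeqivcor}, and prove the last claim by induction with Proposition \ref{Leib1} starting from $\hat\nabla_X(T_i)=-\tilde A_i(X)T_i$. You merely make explicit some bookkeeping (the $\AA^\infty_{lb}$ membership of $T_i^*$ and the expansion $\hat\nabla_{X_1}\cdots\hat\nabla_{X_k}(T_i)=B_kT_i$) that the paper leaves implicit.
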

\begin{proof}
By definition, the density of $\tilde{\Gamma}_{i}^\infty$ implies that $\tilde{\Gamma}_{i}^\infty(U_i)=C^\infty(U_i,\Hi_i)$. Therefore, Proposition \ref{lub} implies that $T\left(\Gamma^\infty(U_i)\right)\subseteq C^\infty(U_i,\Hi_i)$ and $T^*\left(C^\infty(U_i,\Hi_i)\right)\subseteq \Gamma^\infty(U_i)$ (continuously). Since $T T^*=I$, it follows that $T\left(\Gamma^\infty(U_i)\right)=C^\infty(U_i,\Hi_i)$ (and $T^*\left(C^\infty(U_i,\Hi_i)\right)=\Gamma^\infty(U_i)$).

Notice that $\hat\nabla_X(T_i)=-\tilde{A}_{i}(X) T_i$. Thus, induction and Proposition \ref{Leib1} finish our proof.
\end{proof}

Based on the discussion and results presented thus far in this section, we propose the following notion of smooth trivialization, which essentially follows from Theorem \ref{porpeqivcor} and is intended to ensure the existence of a suitable Hilbert bundle structure with a connection. 
\begin{Definition}\label{LT}
Let $(H\to N,\Gamma^\infty,\nabla)$ be a field of Hilbert spaces. Also, let $\{U_i\}_{i\in I}$ be an open cover of $N$ and $\{\Hi_i\}_{i\in I}$ a family of Hilbert spaces, and assume that for each $\lb\in U_i$, there is a unitary operator $T_i(\lb):\Hi(\lb)\to\Hi_i$. We say that $(U_i,\Hi_i,T_i)_{i\in I}$ is a smooth local trivialization of $(H\to N,\Gamma^\infty,\nabla)$ if for each $i\in I$,  $T_i(\Gamma^\infty(U_i))=C^\infty(U_i,\Hi_i)$.

\end{Definition}

Let $(\Gamma^\infty_1,\nabla^1)$ and $(\Gamma^\infty_2,\nabla^2)$ be two smooth field of Hilbert spaces structures on $H\mapsto N$ admitting smooth local trivializations. It follows directly from Proposition \ref{semiunique} and Theorem \ref{porpeqivcor} that $\Gamma^\infty_1(N)=\Gamma_2^\infty(N)$ if and only if the corresponding Hilbert bundles structures are equivalent.


The rest of this subsection will concern the construction of trivializations from a given smooth field of Hilbert spaces. However, such construction will not be required in the family of examples studied in the next section (in other words, the unitary maps $T_i(\lb)$ will be inherent in the framework, so we will not need to construct them). 

The problem of constructing trivilizations was considered only globally in \cite{LS}. Indeed, one of their main results asserts that if $(H\to N,\Gamma^\infty,\nabla)$ is analytic and flat, then there is a (global) trivialization $T:H\to V$ such that $T(\Gamma^\infty)\subseteq C^\infty(N,V)$ and $XT=T\nabla_X$, where $N$ is required to be a connected and simply connected analytic manifold, and $V$ is a suitable Hilbert space (see Theorem 5.1.2 in \cite{LS}). The most difficult part in the proof is to show that analiticity implies that for each $\lb\in N$, there exists an open neighborhood $U$ such that through every point in $H|_U$ there passes a horizontal section $\varphi\in\Gamma^\infty(U)$ \cite[Lemma 4.2.1]{LS}. Since $N$ was assumed to be simply connected, the latter property holds globally \cite[Lemma 4.1.3]{LS}. In our case, in order to obtain a Hilbert bundle, we will need the existence of horizontal sections, but we do not need to assume that $N$ is simply connected. Essentially, we will extend the proof of Theorem 5.12 in \cite{LS} taking into account the additional difficulties coming from local trivializations and non-flatness.  
\begin{Theorem}\label{HS}

Let $(H\to N,\Gamma^\infty,\nabla)$ be a smooth field of Hilbert spaces, where $N$ is a connected finite-dimensional manifold. Assume that for every $\lb\in N$ there is an open neighborhood $U$ and a family of $\Gamma^\infty(U)$- smooth field of operators $\{A(X)\}_{X\in\vect{(U)}}$ such that:
\begin{enumerate}
    \item[a)] Equations \eqref{opcon} and \eqref{derin} holds true.
    \item[b)] $A(X):\Gamma^\infty(U)\mapsto\Gamma^\infty(U)$ is continuous with respect the topology induced by $\Gamma^n(U)$, for every $X\in\vect{(U)}$ and $n\in\mathbb{N}$.  
    \item[c)] Through every point in $H|_U$ there passes a section $\varphi\in\Gamma^\infty(U)$ such that  $\nabla_X\varphi=A(X)\varphi$, for every $X\in\vect{(U)}$.
\end{enumerate}
Then $(H\to N,\Gamma^\infty,\nabla)$ admits a smooth local trivialization.

\end{Theorem}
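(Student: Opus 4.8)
The plan is to build the trivialization out of horizontal sections, namely the sections furnished by hypothesis (c). Fix $\lb_0\in N$ and, shrinking the neighbourhood given in the hypotheses, a connected open $U\ni\lb_0$ carrying the family $\{A(X)\}$. Set $\Hi_U:=\Hi(\lb_0)$, and call $\varphi\in\Gamma^\infty(U)$ \emph{horizontal} if $\nabla_X\varphi=A(X)\varphi$ for all $X\in\vect(U)$; the horizontal sections form a vector space because $A$ is $C^\infty(U)$-linear by \eqref{opcon}. The first key observation is that the inner product of two horizontal sections is constant on $U$: if $\varphi,\psi$ are horizontal, then condition ii) of Definition \ref{def} gives $Xh(\varphi,\psi)=h(\nabla_X\varphi,\psi)+h(\varphi,\nabla_{\overline X}\psi)=h(A(X)\varphi,\psi)+h(\varphi,A(\overline X)\psi)$, and \eqref{derin} makes these two terms negatives of one another, so $Xh(\varphi,\psi)=0$ for every $X$ and hence $h(\varphi,\psi)$ is constant since $U$ is connected. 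Applying this to $\varphi-\psi$ shows a horizontal section is determined by its value at $\lb_0$. By (c) there is therefore, for each $x\in\Hi_U$, a unique horizontal $\Phi_x\in\Gamma^\infty(U)$ with $\Phi_x(\lb_0)=x$, and $x\mapsto\Phi_x$ is linear. I would then define $S(\lb)x:=\Phi_x(\lb)$; the constancy of $h(\Phi_x,\Phi_{x'})=\langle x,x'\rangle_{\lb_0}$ says $S(\lb):\Hi_U\to\Hi(\lb)$ is isometric, and (c) applied at $\lb$ shows it is onto, so $S(\lb)$ is unitary. Set $T(\lb):=S(\lb)^{-1}=S(\lb)^*:\Hi(\lb)\to\Hi_U$.

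Next I would prove $T(\Gamma^\infty(U))\subseteq C^\infty(U,\Hi_U)$. For $\varphi\in\Gamma^\infty(U)$ and $x\in\Hi_U$ one has $\langle T\varphi(\lb),x\rangle=\langle\varphi(\lb),S(\lb)x\rangle=h(\varphi,\Phi_x)(\lb)$, which lies in $C^\infty(U)$ by condition ii) of Definition \ref{def}, since $\varphi,\Phi_x\in\Gamma^\infty(U)$. Differentiating and using the horizontality of $\Phi_x$ together with \eqref{derin} yields $X\langle T\varphi,x\rangle=h(\nabla_X\varphi-A(X)\varphi,\Phi_x)=\langle T(\nabla_X\varphi-A(X)\varphi),x\rangle$. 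Since $\nabla_X\varphi-A(X)\varphi\in\Gamma^\infty(U)$, an induction on $n$ based on the weak differentiability criterion \cite[Lemma 5.1.1]{LS} (Appendix \ref{appa}) shows $T\varphi\in C^n(U,\Hi_U)$ for all $n$: the $n=0$ case holds because $\|T\varphi(\cdot)\|=\|\varphi(\cdot)\|$ is locally bounded while $\langle T\varphi,x\rangle$ is continuous, and the inductive step holds because the candidate derivative $T(\nabla_X\varphi-A(X)\varphi)$ lies in $C^{n-1}(U,\Hi_U)$ by the inductive hypothesis. Hence $T\varphi\in C^\infty(U,\Hi_U)$.

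For the reverse inclusion I would show $Sg\in\Gamma^\infty(U)$ for every $g\in C^\infty(U,\Hi_U)$, where $(Sg)(\lb):=S(\lb)g(\lb)$. On the dense submodule $P^\infty(U,\Hi_U)$ (Example \ref{exmp}) this is immediate, $S(\sum_k a_k x_k)=\sum_k a_k\Phi_{x_k}\in\Gamma^\infty(U)$, and a direct computation using the $C^\infty(U)$-linearity of $A(X)$ gives the intertwining relation
\begin{equation*}
\nabla_X(Sg)=S(Xg)+A(X)(Sg),\qquad g\in P^\infty(U,\Hi_U),
\end{equation*}
where $Xg$ is the componentwise derivative of the $\Hi_U$-valued map $g$. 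I would then prove by induction on $m$ that each seminorm $\|Sg\|_{\Gamma^m,C}$ is bounded by finitely many seminorms of $g$ over a slightly larger compact set, so that $S:P^\infty(U,\Hi_U)\to\Gamma^\infty(U)$ is continuous; by density of $P^\infty(U,\Hi_U)$ and the completeness of the Fréchet space $\Gamma^\infty(U)$, it extends to all of $C^\infty(U,\Hi_U)$ with values in $\Gamma^\infty(U)$, and the extension agrees pointwise with $S$ because locally uniform convergence passes through the norm-one operators $S(\lb)$. Together with the previous paragraph and $TS=ST=I$, this gives $T(\Gamma^\infty(U))=C^\infty(U,\Hi_U)$; letting $U$ range over a cover of $N$ and recording the unitaries $T$ produces the smooth local trivialization of Definition \ref{LT}.

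The main obstacle is the continuity induction of the last paragraph. Peeling off one derivative via the intertwining relation produces a term $S(X_m g)$, harmless to the induction, and a term $\nabla_{X_1}\cdots\nabla_{X_{m-1}}(A(X_m)Sg)$ that threatens a loss of derivatives. This is exactly where hypothesis (b) is essential: the continuity of $A(X_m)$ in the $\Gamma^{m-1}(U)$ topology bounds the $\Gamma^{m-1}$-seminorms of $A(X_m)(Sg)$ by the $\Gamma^{m-1}$-seminorms of $Sg$, which the inductive hypothesis already controls, so the estimate closes at order $m$ without ever invoking order-$m$ seminorms of $Sg$ on the right. Without (b) a derivative would be lost at each step and the induction would fail; in the flat global situation of \cite{LS} this difficulty does not arise, since there $A\equiv 0$ and $S$ intertwines the trivial connection with $\nabla$ directly.
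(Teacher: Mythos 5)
Your proposal follows essentially the same route as the paper's proof: horizontal sections with respect to $\{A(X)\}$, constancy of their inner products via \eqref{opcon} and \eqref{derin}, unitarity of the evaluation maps, the weak-smoothness induction based on \cite[Lemma 5.1.1]{LS} for the inclusion $T(\Gamma^\infty(U))\subseteq C^\infty(U,\Hi_U)$, and the reverse inclusion by continuity of $S$ on a dense submodule, with hypothesis b) used in precisely the same way to avoid the loss of a derivative. The only cosmetic differences are that you realize the model space as the fiber $\Hi(\lb_0)$ instead of the paper's abstract Hilbert space of horizontal sections (your $P^\infty(U,\Hi_U)$ is exactly the paper's submodule $\tilde\Gamma_i$ transported by $S$), and that you phrase the continuity of $S$ through seminorm estimates rather than sequences.

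One justification needs repair: in your $n=0$ step, weak continuity of $T\varphi$ together with \emph{local boundedness} of $\|T\varphi(\cdot)\|$ does not imply norm continuity (for instance $t\mapsto\psi(\cdot-1/t)$ in $L^2(\R)$, extended by $0$ at $t=0$, with $\psi$ a fixed unit vector, is weakly continuous with bounded norms but not strongly continuous at $0$). What you need, and in fact have available, is that $\lb\mapsto\|T\varphi(\lb)\|^2=h(\varphi,\varphi)(\lb)$ is \emph{continuous}, which follows from condition ii) of Definition \ref{def} extended to $\Gamma^\infty(U)$; weak continuity plus continuity of the norm function does give strong continuity in a Hilbert space, and with this correction your induction starts as intended.
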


We say that a section $\varphi$ is horizontal with respect to the family of field of operators $\{A(X)\}$ if the identity $\nabla_X\varphi=A(X)\varphi$ holds for every $X\in\vect{(U)}$. When $A(X)=0$ for every $X\in\vect{(U)}$, we recover the notion of horizontal section given in \cite{LS}; thus our result generalizes the result given in \cite{LS} considering local trivializations and also considering the case $A(X)\neq 0$. 
\begin{proof}

Let $\{U_i\}_{i\in I}$ be an open cover of $N$ and $\{A_{i}(X)\}_{X\in\vect{(U_i)}}$ be a family of smooth fields of operators satisfying a) b) and c) for each $i\in I$. Also let 
$$
\Hi_i=\{\varphi\in\Gamma^\infty(U_i)\mid \varphi\,\,\text{is horizontal with respect to} \,\{A_{i}(X)\}_{X\in\vect{(U_i)}}\}$$ 
Following the proof of Lemma 4.1.1 in \cite{LS}, notice that, if $\varphi,\psi\in\Hi_i$
$$
X\left(\langle\varphi,\psi\rangle\right)=\langle\nabla_X\varphi,\psi\rangle+\langle\varphi,\nabla_{\overline{X}}\psi\rangle=\langle A_{i}(X)\varphi,\psi\rangle+\langle\varphi,A_{i}(\overline{X})\psi\rangle=0.
$$
Since $N$ is connected, the map $\langle\varphi,\psi\rangle$ is constant on $N$, for every $\varphi,\psi\in\Hi_i$, thus it defines an inner product on $\Hi_i$. Moreover, since through every point in $H|_{U_i}$ there passes a horizontal section, the map $\tilde T_i(\lb):\Hi_i\mapsto \Hi(\lb)$ given by $\tilde T_i(\lb)\varphi=\varphi(\lb)$ is onto. In particular, $\Hi_i$ is a Hilbert space and 
$\tilde T_i(\lb)$ is unitary. Let us show that if we take $T_i(\lb):=\tilde T_i^*(\lb)$, then $(U_i,\Hi_i,T_i)_{i\in I}$ is a smooth local trivialization. For simplicity, whenever we interpret a horizontal section $\varphi$ as a point in the Hilbert space $\Hi_i$ we will denote it by $\hat\varphi$. By definition, if $\varphi$ is a horizontal section in $\Gamma^\infty(U_i)$, then $T_i\varphi=\hat\varphi$. Let us prove that $T_i(\Gamma^\infty(U_i))\subseteq C^\infty(U_i,\Hi_i)$. Let $\psi\in\Gamma^\infty(U_i)$. Since each $T_i(\lb)$ is unitary, clearly $T_i\psi\in C(U_i,\Hi_i)$ (in fact, $T_i(\Gamma^0(U_i))\subseteq C(U_i,\Hi_i)$). Assume that $T_i\psi\in C^n(U_i,\Hi_i)$. For any $\hat\varphi\in\Hi_i$ and $X\in\vect (U_i)$, we have that $\langle T_i\psi,\hat\varphi\rangle=\langle \psi,\varphi\rangle$ belongs to $C^{n+1}(U_i)$ and 
$$
X\langle T_i\psi,\hat\varphi\rangle=X\langle \psi,\varphi\rangle=\langle \nabla_X(\psi),\varphi\rangle+\langle \psi,\nabla_{\overline{X}}(\varphi)\rangle=\langle T_i\nabla_X(\psi),\hat\varphi\rangle+\langle \psi,A(\overline{X})\varphi\rangle
$$
$$
=\langle T_i(\nabla_X- A(X))\psi,\hat\varphi\rangle
$$
Since $T(\nabla_X- A(X))\psi$ belongs to $C^n(U_i,\Hi_i)$, Lemma 5.1.1 in \cite{LS} (or appendix \ref{appa}) implies that $T_i\psi\in C^{n+1}(U_i,\Hi_i)$ and
$X(T_i\psi)=T_i(\nabla_X- A(X))\psi$.

Let $\tilde\Gamma_i$ the space of sections of the form $\sum^m a_j\hat\varphi_j$, with $a_j\in C^\infty(U_i)$ and $\hat\varphi_j\in\Hi_i$. Clearly, $T_i^*(\tilde\Gamma_i)\subseteq \Gamma^\infty(U_i)$. Let us show that each $T_i^*$ is continuous. We will prove by induction on $n$ that if $(f_k)$ is a sequence such that $f_k\to f$ in $\tilde\Gamma_i$ with the $C^n$-topology then $T_i^*(f_k)\to T_i^*(f)$ with the $\Gamma^n(U_i)$-topology. Since $T_i^*$ is unitary, the claim follows trivially for $n=0$. For $n\geq 1$, assume the claim holds true for $n-1$. It is enough to show that $T_i^*(f_k)\to T_i^*(f)$ in $\Gamma^{n-1}(U_i)$   and $\nabla_X[T_i^*(f_k)]\to \nabla_X[T_i^*(f)]$ in $\Gamma^{n-1}(U_i)$, for every $X\in\vect{(U_i)}$. Since $f_k\to f$ in $C^{n-1}(U_i,\Hi_i)$, the first part follows from the inductive hypothesis. Moreover, b) implies that $A(X) T_i^*(f_k)\to A(X) T_i^*(f)$ in $\Gamma^{n-1}(U_i)$. Therefore, since $X(f_k)\to X(f)$ in $C^{n-1}(U_i,\Hi_i)$, we have that 
$$
\nabla_X(T_i^*f_k)=T_i^*X(f_k)- A(X) T^*_if_k\;\longrightarrow \;T_i^*X(f)- A(X) T^*_if=\nabla_X(T_i^* f).
$$
Since $\tilde\Gamma^\infty_i$ is dense in $C^\infty(U_i;\Hi_i)$, we have that  $T_i^*\left[ C^\infty(U_i,\Hi_i)\right]\subseteq \Gamma^\infty(U_i)$. Moreover, if $f\in C^\infty(U_i,\Hi_i)$, then $f=T_i(T_i^*f)\in T_i(\Gamma^\infty(U_i))$. Therefore $T_i(\Gamma^\infty(U_i))=C^\infty(U_i,\Hi_i)$ and this finishes the proof. 

\end{proof}
\begin{Corollary}
If $(H\to N,\Gamma^\infty,\nabla)$ is a flat analytic field of Hilbert spaces, then it admits a full local trivialization $(U_i,\Hi_i,T_i)_{i\in I}$ such that $A_{i}=0$.
\end{Corollary}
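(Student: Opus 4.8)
The plan is to derive this as a direct application of Theorem \ref{HS} with the trivial choice of auxiliary operators $A_i(X)=0$ for every $X\in\vect(U_i)$. With this choice a horizontal section is just a parallel section, i.e.\ one with $\nabla_X\varphi=0$ for all $X$, so the entire content of the corollary reduces to verifying that the three hypotheses of Theorem \ref{HS} hold in the flat analytic setting. I would first observe that the zero field of operators is trivially a $\Gamma^\infty(U)$-smooth field of operators, so it is a legitimate candidate for the family $\{A(X)\}$.

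First I would dispose of conditions a) and b), which are immediate for $A_i=0$. Equations \eqref{opcon} and \eqref{derin} both reduce to the identity $0=0$, since $0+0=0$, $a\cdot 0=0$, and $\langle 0,g\rangle=0=\langle f,-0\rangle$; and the zero operator maps $\Gamma^\infty(U)$ into itself and is continuous with respect to every $\Gamma^n(U)$-topology. Thus the only substantive point is condition c).

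Condition c) requires that through every point of $H|_U$ there pass a horizontal section $\varphi\in\Gamma^\infty(U)$, for a suitable neighborhood $U$ of each $\lb\in N$; since $A_i=0$, this means $\nabla_X\varphi=0$ for all $X$, i.e.\ the local existence of parallel sections. This is exactly where flatness and analyticity are needed, and it is precisely the assertion of \cite[Lemma 4.2.1]{LS}: for a flat analytic field of Hilbert spaces, each $\lb\in N$ admits a (connected) neighborhood $U$ such that through every point of $H|_U$ there passes a horizontal section in $\Gamma^\infty(U)$. Invoking that lemma supplies the open cover $\{U_i\}_{i\in I}$ needed to run Theorem \ref{HS}. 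The hard part is of course c), but in the flat analytic category it is already settled by \cite{LS}; the role of the present corollary is merely to recast that result inside our local-trivialization framework, so that we no longer need the global simple-connectedness assumption of \cite{LS} but only work on the neighborhoods $U_i$ directly.

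Having verified a), b), and c) with $A_i=0$, Theorem \ref{HS} then produces the desired smooth local trivialization $(U_i,\Hi_i,T_i)_{i\in I}$, where $\Hi_i$ is the Hilbert space of horizontal sections on $U_i$, equipped with the (constant) inner product $\langle\varphi,\psi\rangle$, and $T_i(\lb)=\tilde T_i^*(\lb)$ with $\tilde T_i(\lb)\varphi=\varphi(\lb)$. By construction the associated family of operators is $A_i=0$, which is the additional conclusion asserted in the statement. I expect no further obstacle beyond the importation of \cite[Lemma 4.2.1]{LS}, since everything else is the verification already carried out in the proof of Theorem \ref{HS}.
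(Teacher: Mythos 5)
Your proposal is correct and coincides with the argument the paper intends: the corollary is exactly Theorem \ref{HS} applied with $A_i(X)=0$, where conditions a) and b) are trivial for the zero field of operators and condition c) — local existence of horizontal sections in the flat analytic case — is supplied by \cite[Lemma 4.2.1]{LS}, precisely the reference the paper cites for this point in its discussion preceding Theorem \ref{HS}. The identity $A_i=0$ for the resulting trivialization follows, as you note, from the relation $X(T_i\psi)=T_i(\nabla_X-A(X))\psi$ established in the proof of Theorem \ref{HS}.
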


We finish this section by providing a way to construct trivializations.
Let $(H_1\to N,\Gamma_1^\infty,\nabla^1)$ and $(H_2\to N,\Gamma_2^\infty,\nabla^2)$ be smooth fields of Hilbert spaces and $S:\Gamma_1^\infty\mapsto \Gamma_2^\infty(N)$ be a smooth field of unitary operators. Assume that $(H_2\to N,\Gamma_2^\infty,\nabla^2)$ admits a local trivialization $(U_i,\Hi_i, T^2_i)$. Then $T_i^1=T_i^2 S$ defines a local trivialization of $(H_1\to N,\Gamma_1^\infty,\nabla^1)$ and 
$$
\tilde{A}^1_{i}(X)= \tilde{A}^2_{i}(X)+ T^2_i S\hat\nabla_X(S^*)(T^2_i)^*.
$$
For instance, if $(H_1\to N,\Gamma_1^\infty,\nabla^1)$ is a projectively flat smooth field of Hilbert spaces and the curvature is exact, then one can construct a flat smooth field of Hilbert spaces $(H_2\to N,\Gamma_2^\infty,\nabla^2)$ and a smooth field of operators $S:\Gamma_1^\infty\mapsto\Gamma_2^\infty$, as explained in subsection I.2.4 in \cite{LS}. In particular, $(H_1\to N,\Gamma_1^\infty,\nabla^1)$ admits a local trivialization if $(H_1\to N,\Gamma_1^\infty,\nabla^1)$ is also analytic (see Theorem 2.4.2 in \cite{LS}). 
\section{Riemannian direct images}\label{RDI}
Let $M$ and $N$ be oriented Riemannian manifolds with dimension $m$ and $k$ respectively and $k<m$.   
Let $\rho:M\to N$ be a smooth submersion. The implicit function theorem guarantees that $M_\lb:=\rho^{-1}(\lb)$ is a $(m-k)$-submanifold of $M$, for each $\lb\in N$. 

Recall that, by definition $D\rho(x):T_x M\to T_{\rho(x)}N$ is an epimorphism, for each $x\in M$. Also, $\text{Ker}D\rho(x)=T_x M_{\rho(x)}$, then the restriction of $D\rho(x)$ to $T_x^{\perp}M_{\rho(x)}$ defines an isomorphism. 
Given this fact, the following notation will be useful.

 \begin{Definition}\label{xjmu}
     Let $\rho:M\to N$ be a submersion, and let $\eta_\lb$ be the Riemannian volume form on $M_\lb$. 
     \begin{enumerate}
         \item[a)] For $X\in\vect(N)$, we denote by $\check X$ the only vector field on $M$ normal to each $M_\lb$ such that $D\rho(\check X)=X$, in other words 
$$
\check X(q)=(D\rho(q)|_{T
^\perp_q M_{\rho(q)}})^{-1}(X(\rho(q)))\,,\quad q\in M.
$$
\item[b)] We denote $J(q):=J_\rho(q):=\det[D\rho|_{T_q^\perp M_{\rho(q)}}]$.
\item[c)] We define the volume form $\mu_\lb=J^{-1}\eta_\lb$. 
     \end{enumerate}
 \end{Definition}

The implicit function theorem guarantees that $\check X$ is a smooth vector field. We interpret $\check X$ as the natural lift of $X$ from $N$ to $M$. Notice that if $N=\R$, then $ J_\rho(x)=||\nabla\rho(x)||$.

Let $\pi:E\to M$ be a finite-dimensional Hermitian vector bundle, with fiber $F$ and a Hermitian connection $\nabla^E$. Denote by $\|\cdot\|_x$ the norm on $\pi^{-1}(x)$, for each $x\in M$. Also, denote by $\Gamma(E)$ the corresponding space of sections and by $\Gamma^{\infty}(E)$ the space of smooth sections. Notice that $E_\lb:=\pi|_{E_\lb}:E_\lb\to M_\lb$ is also a Hermitian bundle, where $E_\lb=\pi^{-1}(M_\lb)$. We will endow the field of Hilbert spaces 
\begin{equation}\label{hilbertexample}
\Hi(\lb)=L^2(E_\lb):=\Big\{\varphi\in\Gamma(E_\lb) \,\big| \,\varphi\text{ is measurable}\, , \int_{M_\lb}\|\varphi(x)\|^2_x d\mu_\lb(x)<\infty\Big\}
\end{equation}
with an explicit smooth structure. We call such a field of Hilbert spaces a Riemannian direct image. We will show that such a field admits a smooth trivialization if $\rho$ defines a fiber bundle and a suitable condition on $\nabla^{E}$ is required (which becomes trivial when $\rho$ proper map). Within the holomorphic framework (i.e., $M,N$ are complex manifolds and $\rho$ is holomorphic), such a problem was considered in Chapter II of \cite{LS}. However, the construction there cannot be adapted to the smooth framework, as explained in Subsection 6.5 of \cite{LS}. In fact, the authors of \cite{LS}, motivated by some geometric quantization problems, suggested that in order to overcome the latter issue, we might also require an Ereshmann's connection on $M$. In our case, the canonical Ereshmann's connection  $M\ni x\to T_x
^\perp M_{\rho(x)}$ will turn to be a fundamental ingredient in the construction of the smooth structure of our field of Hilbert spaces. In subsection \ref{Hcase}, we will discuss in more detail the similarities and differences between Riemnanian direct images and holomorphic direct images.

\subsection{Derivating integrals over $M_\lb$ with respect to $\lb$.}\label{dintlb}

Since the inner products on each $\Hi(\lb)$ are an integral over $M_\lb$ and we are looking for a connection satisfying condition \emph{ii)} in Definition \ref{def}, we would like to compute derivatives of functions of the form 
\begin{equation}\label{f1}
F(\lb)=\int_{M_\lb}\!f\,d\mu_{\lb}\,,\quad f\in  C^\infty(M)\,,\; \lb\in N.    
\end{equation}
Consider the case
 $N=\R^k$. Thus, $\rho=\rho_1\times\cdots\times\rho_k$ and we have that $T_x^{\perp}M_{\rho(x)}=\text{span}\langle\nabla\rho_i(x)\mid i=1,\cdots, k\rangle$, where $\rho_i\in C^\infty(M)$. If $\lb=\lb_1\times\cdots\times\lb_k\in\R^k$,  it will become useful to consider the submanifolds
\begin{equation}\label{mjlb}
M_\lb^i:=\{x\in M\mid\rho_n(x)=\lb_n,\forall n\neq i\}\,,\quad i=1,\dots k.    
\end{equation}
Clearly $M_\lb$ is a submanifold of $M_\lb^i$ of co-dimension $1$. Let also $J_i(x)=J_{\tilde\rho_i}(x)$, where $\tilde\rho_i:=\rho_1\times\cdots\times\widehat\rho_i\times\cdots\times\rho_k$, i.e., $\tilde\rho_i$ is obtained by removing $\rho_i$ from $\rho$. In particular, $M_\lb^i=\tilde\rho_i^{-1}(\lb)$. 

\begin{Lemma}\label{l1}
Let $M$ be a Riemannian manifold and $\rho_i\in C^\infty(M)$, with $1\leq i\leq k$ and $k<m$.  Assume that $\rho=\rho_1\times\cdots\times\rho_k$ is submersion and let $\pi_x^i$ be the orthogonal projection of $T_x M$ onto the subspace $\langle \nabla\rho_n(x)\mid i\neq n\rangle^\perp$. 
\begin{enumerate}
    \item[a)] For each $i=1,\dots,k$, the vector field $\frac{\check\partial}{\partial\lb_i}$ is given by
\begin{equation}\label{hat}
\check{\frac{\partial}{\partial\lb_i}}(x)=\frac{1}{||\pi_x^i(\nabla\rho_i(x))||^2}\pi_x^i(\nabla\rho_i(x)).    
\end{equation}

\item[b)] The gradient of the restriction of $\rho_i$ to $M_\lb^i$ is $\pi^i(\nabla\rho_i)$. In particular, for each $x\in M$, we have that $J(x)=J_i(x)||\pi_x^i(\nabla\rho_i(x))||$.
\end{enumerate}

\end{Lemma}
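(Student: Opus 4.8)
The plan is to argue pointwise, reducing both parts to linear algebra in $T_xM$ once the relevant tangent and normal spaces are identified. Since $\rho$ is a submersion, the gradients $\nabla\rho_1(x),\dots,\nabla\rho_k(x)$ are linearly independent, so $T_xM_\lb=\langle\nabla\rho_1(x),\dots,\nabla\rho_k(x)\rangle^\perp$ and hence $T_x^\perp M_{\rho(x)}=\langle\nabla\rho_1(x),\dots,\nabla\rho_k(x)\rangle$; likewise each $\tilde\rho_i$ is a submersion and $T_xM_\lb^i=\langle\nabla\rho_n(x)\mid n\neq i\rangle^\perp$, so that $\pi_x^i$ is exactly the orthogonal projection onto $T_xM_\lb^i$. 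With $N=\R^k$ the differential acts by $D\rho(x)v=(\langle\nabla\rho_1(x),v\rangle,\dots,\langle\nabla\rho_k(x),v\rangle)$.

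For a), Definition \ref{xjmu} characterizes $\check{\frac{\partial}{\partial\lb_i}}(x)$ as the unique $v\in T_x^\perp M_{\rho(x)}$ with $D\rho(x)v=e_i$, i.e. $\langle\nabla\rho_n(x),v\rangle=\delta_{ni}$. I would verify that the candidate $v=\pi_x^i(\nabla\rho_i(x))/\|\pi_x^i(\nabla\rho_i(x))\|^2$ meets these requirements: being in $T_xM_\lb^i$ it is orthogonal to every $\nabla\rho_n$ with $n\neq i$; and since $\pi_x^i$ is self-adjoint and idempotent, $\langle\nabla\rho_i,\pi_x^i\nabla\rho_i\rangle=\|\pi_x^i\nabla\rho_i\|^2$, whence $\langle\nabla\rho_i(x),v\rangle=1$. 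Writing $\pi_x^i(\nabla\rho_i)=\nabla\rho_i-(\mathrm{Id}-\pi_x^i)(\nabla\rho_i)$ exhibits $v$ as a combination of the $\nabla\rho_n$, so $v\in T_x^\perp M_{\rho(x)}$; as $D\rho(x)|_{T_x^\perp M_{\rho(x)}}$ is an isomorphism, $v$ is the unique preimage of $e_i$, which proves a).

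For b), the identification of the gradient is the standard fact that restricting a function to a Riemannian submanifold projects the ambient gradient tangentially: for $w\in T_xM_\lb^i$ one has $\langle\nabla^{M_\lb^i}(\rho_i|_{M_\lb^i}),w\rangle=d\rho_i(w)=\langle\nabla\rho_i,w\rangle=\langle\pi_x^i(\nabla\rho_i),w\rangle$, and since both $\nabla^{M_\lb^i}(\rho_i|_{M_\lb^i})$ and $\pi_x^i(\nabla\rho_i)$ lie in $T_xM_\lb^i$, they coincide. For the determinant identity I would realize the Jacobian factors as Gram determinants: in an orthonormal frame $D\rho(x)$ has rows $\nabla\rho_m(x)$, so $D\rho(x)D\rho(x)^{T}=G$ with $G_{mn}=\langle\nabla\rho_m(x),\nabla\rho_n(x)\rangle$, and restricting to the row space $T_x^\perp M_{\rho(x)}$ gives $J(x)=\pm\sqrt{\det G}$; similarly $J_i(x)=\pm\sqrt{\det G^{(i)}}$ for the Gram matrix $G^{(i)}$ of $\{\nabla\rho_n\mid n\neq i\}$. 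The classical Gram recursion $\det G=\det G^{(i)}\,d_i^2$, where $d_i$ is the distance from $\nabla\rho_i$ to $\langle\nabla\rho_n\mid n\neq i\rangle$, together with the observation that $d_i=\|\pi_x^i(\nabla\rho_i)\|$ (the component of $\nabla\rho_i$ orthogonal to that span, i.e. the gradient found in the first assertion), yields $J(x)=J_i(x)\|\pi_x^i(\nabla\rho_i(x))\|$ once orientations are fixed so all factors are positive.

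The routine content is the linear algebra of a) and the gradient identification, both of which follow immediately from the tangent/normal space descriptions. The main obstacle is the determinant factorization in b): pinning down that $\det[D\rho|_{T_x^\perp M_\lb}]$ equals $\sqrt{\det G}$ and that the ``extra factor'' in the Gram recursion is precisely the length of the projected gradient, while keeping the chosen orientations on $M_\lb$, $M_\lb^i$ and $N$ consistent so that the identity holds as an equality of positive scalars rather than merely up to sign.
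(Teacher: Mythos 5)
Your proof is correct, and for the heart of the lemma --- the identity $J(x)=J_i(x)\|\pi_x^i(\nabla\rho_i(x))\|$ --- it takes a genuinely different route from the paper. For part a) you do exactly what the paper does (characterize $\check{\partial}/\partial\lb_i$ as the unique normal vector $v$ with $\langle\nabla\rho_n,v\rangle=\delta_{in}$ and check the candidate), except that the paper leaves the verification as ``straightforward''; your check, including the observation that $\pi_x^i(\nabla\rho_i)$ lies in the span of the gradients, is the content the paper omits. For the gradient identification in b), the paper argues that $\nabla(\rho_i|_{M_\lb^i})$ lies in the one-dimensional space $T_xM_\lb^\perp\cap\langle\nabla\rho_n\mid n\neq i\rangle^\perp$ and then pins down the scalar by pairing against $\pi_x^i(\nabla\rho_i)$, whereas you invoke the standard fact that the intrinsic gradient of a restriction is the tangential projection of the ambient gradient; these are equivalent, and yours is arguably cleaner. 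The real divergence is the determinant identity: the paper writes $T_x^\perp M_\lb$ as the orthogonal direct sum of $T_x^\perp M_\lb^i$ and the line spanned by the unit vector along $\pi_x^i(\nabla\rho_i)$, so that $D\rho|_{T_x^\perp M_\lb}$ becomes block triangular in an adapted basis and the determinant factors as $\|\pi_x^i(\nabla\rho_i)\|\cdot J_i(x)$; you instead identify $J$ and $J_i$ with square roots of Gram determinants of the gradient systems (consistent with the formula $J_\rho=[\det(D\rho\,D\rho^*)]^{1/2}$ the paper records in its coarea appendix) and apply the classical base-times-height recursion $\det G=\det G^{(i)}\,d_i^2$ with $d_i=\|\pi_x^i(\nabla\rho_i)\|$. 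Both proofs encode the same geometric fact (the height of $\nabla\rho_i$ over the span of the other gradients), but yours trades the paper's choice of adapted bases and triangular structure for an orientation-free computation with Gram matrices: it yields the identity between absolute values directly, which suffices since $J$ is positive as used throughout (e.g.\ in $\mu_\lb=J^{-1}\eta_\lb$), at the cost of having to justify $|\det(D\rho|_{T_x^\perp M_\lb})|=\sqrt{\det G}$, a step the paper's decomposition avoids.
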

\begin{proof}
Let $v$ be the only vector orthogonal to each $M_\lb$ such that $D\rho(v)=\frac{\partial}{\partial\lb_i}$. Thus, $D\rho(v)(\lb_n)=\delta_{in}$, for each $n=1,\cdots, k$. By definition, $D\rho(v)(\lb_n)=\langle\nabla\rho_n,v\rangle$. It is straightforward to check that the right-hand side of \eqref{hat} satisfies the required condition.

For the second part of our result, if $x\in M_{\lb}$, then $\nabla (\rho_i|_{M^i_\lb})(x)\in T_x M_\lb^{\perp}\cap \langle\nabla\rho_n(x)\mid i\neq n\rangle^\perp$. Thus, $\nabla (\rho_i|_{M^i_\lb})(x)=C\pi_x^i(\nabla\rho_i(x))$ for some real constant $C$. Moreover, 
$$
\langle\nabla (\rho_i|_{M^i_\lb})(x),\pi_x^i(\nabla\rho_i(x))\rangle=D\rho(x)[\pi_x^i(\nabla\rho_i(x))](\lb_i)=||\pi_x^i(\nabla\rho_i(x))||^2.
$$
Therefore $C=1$. The last claim follows after recalling that $J(x)=|\det(D\rho(x)|_{T_x^\perp M_\lb})|$ and using the orthogonal decomposition $T_x^\perp M_\lb=\text{span}\{||\pi_x^i(\nabla\rho_i(x))||\frac{\check\partial}{\partial\lb_i}\}\oplus T_x^\perp M^i_\lb$.
\end{proof}

In order to compute derivatives of $F$ defined in \eqref{f1}, we will need to consider the notion of divergence of a vector field. 
Let $Y$ be a vector field on $M$ and $\eta$ a volume form on $M$. By definition, the divergence of $Y$ with respect to $\eta$ is the unique smooth function $\div_\eta(Y)\in C^\infty(M)$ such that
$$
\mathcal{L}_Y(\eta)=\div_\eta(Y)\eta,
$$
where $\mathcal{L}$ is the Lie derivative on $M$. If $\eta$ is the canonical volume form on $M$ (coming from its Riemannian structure), then we omit $\eta$ in the notation, i.e. we write $\div(Y)$. In \cite{LS}, it was considered an extension of the previous definition of the divergence suitable for our framework. Let $\nu$ be a $(m-k)$-form such that its restriction to each $M_\lb$ is a volume form. The divergence of $Y$ with respect to $\nu$ is the unique smooth function $\div_\nu(Y)$ such that 
$$
\mathcal{L}_Y(\nu)|_{M_\lb}=\div_\nu(Y)\nu|_{M_\lb}, \forall \lb\in N
$$
The following Lemma provides some identities concerning the computation of the divergence of vector fields with respect to different forms. 
\begin{Lemma}\label{l2} Let $M$ y $N$ be smooth manifolds with volume forms $\eta$ y $\zeta$ respectively. Fix an smooth submersion $\rho:M\mapsto N$ and
let $\nu$ be a $(m-k)$-form on $M$ such that $\eta=\nu\wedge\rho^*(\zeta)$. 
\begin{enumerate}
\item[a)] If $Y$ is a vector field on $M$, then
$$
\div_\nu Y(x)=\div_\eta(Y)-\div_\zeta(D\rho (Y))\circ\rho.
$$
Moreover, if $Y$ is tangent to each $M_\lb$ then $\div_\eta Y(x)=\div_{\mu_{\rho(x)}}Y_{\rho(x)}(x)$, where $Y_\lb$ denotes the restriction of $Y$ to $M_\lb$.
\item[b)] Let $J$ be a non-vanishing smooth function on $M$ and let $\omega=J^{-1}\eta$. Then, for each vector field $Y$ on $M$, we have that 
$$
\div_\omega(Y)=J\div_{\eta}(J^{-1} Y)=\div_\eta(Y)-J^{-1}Y(J).
$$
Similarly, if $L$ is a smooth manifold endowed with a volume form $\omega$, $Y$ is a vector field on $M$, and $\Psi:M\to L$ is a  diffeomorphism, then
$$
\div_\eta(Y)=J_\Psi\div_\omega(D\Psi(J_\Psi^{-1}Y))= \div_\omega(D\Psi Y)\circ \Psi-J_\Psi^{-1}Y(J_\Psi),
$$
where $J_\Psi$ is the Jacobian of $\Psi$. 
\item[c)]  $\div([X,Y])=X(\div(Y))-Y(\div(X))$, for every $X,Y\in \vect(M)$.

\end{enumerate}
\end{Lemma}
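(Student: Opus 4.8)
The three identities all rest on the defining relation $\mathcal{L}_Y(\vartheta)=\div_\vartheta(Y)\,\vartheta$ of the divergence, together with elementary properties of the Lie derivative, so I would dispatch the easier parts first. For (c), I would apply the commutator identity $\mathcal{L}_{[X,Y]}=\mathcal{L}_X\mathcal{L}_Y-\mathcal{L}_Y\mathcal{L}_X$ to the Riemannian volume form $\eta$ of $M$: expanding $\mathcal{L}_X(\div(Y)\eta)-\mathcal{L}_Y(\div(X)\eta)$ with the Leibniz rule, the two products $\div(X)\div(Y)\eta$ cancel and one is left with $(X(\div Y)-Y(\div X))\eta=\div([X,Y])\eta$. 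For (b), the first chain of equalities is the scaling rule for divergence: writing $\omega=J^{-1}\eta$ and using $\mathcal{L}_Y(g\vartheta)=Y(g)\vartheta+g\,\mathcal{L}_Y\vartheta$ with $g=J^{-1}$, together with the product rule $\div_\eta(fY)=f\div_\eta(Y)+Y(f)$, gives $\div_\omega(Y)=\div_\eta(Y)-J^{-1}Y(J)=J\div_\eta(J^{-1}Y)$. The statements involving $\Psi$ follow by combining this scaling rule with the naturality of the divergence under diffeomorphisms, namely $\div_{\Psi^*\omega}(Y)=\div_\omega(D\Psi\,Y)\circ\Psi$ (a direct consequence of $\Psi^*\mathcal{L}_{D\Psi Y}\omega=\mathcal{L}_Y\Psi^*\omega$), applied to the factorization $\eta=J_\Psi^{-1}\Psi^*\omega$; the same manipulation that produced $J\div_\eta(J^{-1}Y)$ yields the middle expression.

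For the first identity in (a) the plan is to differentiate the factorization $\eta=\nu\wedge\rho^*\zeta$ along $Y$. By the Leibniz rule,
$$
\div_\eta(Y)\,\eta=\mathcal{L}_Y\eta=(\mathcal{L}_Y\nu)\wedge\rho^*\zeta+\nu\wedge\mathcal{L}_Y(\rho^*\zeta).
$$
For the second summand, when $Y$ is $\rho$-projectable (the case of interest, $Y=\check X$ with $D\rho(\check X)=X$), one has $\mathcal{L}_Y(\rho^*\zeta)=\rho^*(\mathcal{L}_{D\rho(Y)}\zeta)=(\div_\zeta(D\rho Y)\circ\rho)\,\rho^*\zeta$, so that $\nu\wedge\mathcal{L}_Y(\rho^*\zeta)=(\div_\zeta(D\rho Y)\circ\rho)\,\eta$. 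The first summand I would handle through the following key observation: an $(m-k)$-form $\beta$ whose restriction to every fibre $M_\lb$ vanishes lies in the ideal generated by $d\rho_1,\dots,d\rho_k$, and hence $\beta\wedge\rho^*\zeta=0$, since $\rho^*\zeta$ is a positive multiple of $d\rho_1\wedge\cdots\wedge d\rho_k$. Applying this to $\beta=\mathcal{L}_Y\nu-\div_\nu(Y)\nu$, which restricts to zero on each $M_\lb$ exactly by the definition of $\div_\nu$, yields $(\mathcal{L}_Y\nu)\wedge\rho^*\zeta=\div_\nu(Y)\,\eta$. Substituting both computations gives $\div_\eta(Y)=\div_\nu(Y)+\div_\zeta(D\rho Y)\circ\rho$, which is the claim.

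For the ``moreover'' part I would first record that the restriction of $\nu$ to a fibre is forced to be $\mu_\lb$: evaluating $\eta=\nu\wedge\rho^*\zeta$ on an adapted frame $e_1,\dots,e_{m-k}$ (orthonormal, tangent to $M_\lb$) followed by $f_1,\dots,f_k$ (orthonormal, normal to $M_\lb$), and using that $\rho^*\zeta$ kills every tangent vector while $\rho^*\zeta(f_1,\dots,f_k)=\det[D\rho|_{T^\perp_xM_\lb}]=J$, gives $\nu(e_1,\dots,e_{m-k})=J^{-1}=\mu_\lb(e_1,\dots,e_{m-k})$; since the fibre has dimension $m-k$, this determines $\nu|_{M_\lb}=\mu_\lb$. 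Next, for $Y$ tangent to each $M_\lb$ its flow preserves the fibres, so the ambient and intrinsic Lie derivatives agree upon restriction, giving $\div_\nu(Y)=\div_{\nu|_{M_\lb}}(Y_\lb)=\div_{\mu_\lb}(Y_\lb)$. Finally, $D\rho(Y)=0$ reduces the first identity of (a) to $\div_\eta(Y)=\div_\nu(Y)$, and combining the two yields $\div_\eta(Y)=\div_{\mu_\lb}(Y_\lb)$.

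The main obstacle is precisely the key observation in (a) that a fibrewise-vanishing $(m-k)$-form lies in the differential ideal of $\rho$; this is what converts the analytically defined quantity $\div_\nu(Y)$, which a priori only controls restrictions to fibres, into a genuine global identity of top forms on $M$, and it is also the mechanism behind $\nu|_{M_\lb}=\mu_\lb$. A secondary point requiring care is projectability: $D\rho(Y)$ is an honest vector field on $N$ only when $Y$ is $\rho$-related to one, which is why the clean identity $\mathcal{L}_Y(\rho^*\zeta)=\rho^*(\mathcal{L}_{D\rho Y}\zeta)$ is applied to the projectable lifts $\check X$ that the intended application demands; for a general $Y$ the term $\div_\zeta(D\rho Y)\circ\rho$ must be read pointwise through such a projectable representative.
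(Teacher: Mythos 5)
Your proof is correct and follows essentially the same route as the paper's: every part rests on the defining relation $\mathcal{L}_Y(\vartheta)=\div_\vartheta(Y)\vartheta$, the Leibniz rule applied to the factorization $\eta=\nu\wedge\rho^*(\zeta)$, the scaling and pullback-naturality rules for part b), and the commutator identity $\mathcal{L}_{[X,Y]}=\mathcal{L}_X\mathcal{L}_Y-\mathcal{L}_Y\mathcal{L}_X$ for part c). For part a) the paper runs the same computation in the opposite order: it first derives the global wedge identity and then restricts to the fibres, using that $\rho^*(\zeta)(x)$ vanishes on $TM_{\rho(x)}$ --- which is exactly the reverse direction of your ``key observation'' that a fibrewise-vanishing $(m-k)$-form dies when wedged with $\rho^*(\zeta)$; the two are the same pointwise linear-algebra fact. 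There are two sub-steps where you genuinely diverge, both to your advantage in self-containedness. First, you establish $\nu|_{M_\lb}=\mu_\lb$ by evaluating $\eta=\nu\wedge\rho^*(\zeta)$ on an adapted orthonormal frame and reading off $\nu(e_1,\dots,e_{m-k})=J^{-1}$, whereas the paper obtains this identification by citing the coarea formula. Second, you flag explicitly that $D\rho(Y)$ is an honest vector field on $N$ only when $Y$ is $\rho$-projectable, so the first identity of a) must be read through projectable representatives; the paper states the lemma for arbitrary $Y$ and leaves this implicit, which is harmless for its applications (there $Y$ is always $\check X$ or tangent to the fibres, hence projectable) but is a genuine imprecision that your remark repairs.
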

\begin{proof}
Since $\eta=\nu\wedge\rho^*(\zeta)$, we have that
$$
\div_\eta(Y)\nu\wedge\rho^*(\zeta) =\mathcal{L}_Y(\nu)\wedge\rho^*(\zeta)+\nu\wedge\rho^*(\mathcal{L}_{D\rho(Y)}\zeta)=\mathcal{L}_Y(\nu)\wedge\rho^*(\zeta)+\left(\div_\zeta(D\rho (Y))\circ\rho\right)\nu\wedge\rho^*(\zeta).
$$
Since $\rho^*(\zeta)(x)$ vanishes on $TM_{\rho(x)}$, the latter identity implies that 
$$
\left(\div_\eta(Y)-\div_\zeta(D\rho (Y))\circ\rho)\right)\nu|_{M_\lb}=\mathcal{L}_Y(\nu)|_{M_\lb}=\div_\nu (Y)\nu|_{M_\lb}
$$
and this shows the first claim of part a). The second claim follows from the previous identity, and the coarea formula (see Appendix A or \cite{Fe}) implies that the restriction of $\nu$ to any $M_\lb$ coincides with $\mu_\lb:=J^{-1}\eta_\lb$.

The first claim in part b) follows by noting that $\mathcal L_X\omega=J^{-1}\mathcal L_X\eta +X(J^{-1})\eta$ and $J X(J^{-1})=-J^{-1}X(J)$. The second claim in part b) follows from the same argument, but using that $J_\Psi\eta=\Psi^*(\omega)$ and that the pullback of $\Psi$ exchanges the Lie derivatives.

To prove c), we compute
$$
\begin{aligned}
\div([X,Y])\eta &= \mathcal{L}_{[X,Y]}\eta=\mathcal{L}_X\mathcal{L}_Y\eta-\mathcal{L}_Y\mathcal{L}_X\eta= \mathcal{L}_X\div(Y)\eta-\mathcal{L}_Y\div(X)\eta \\
&= \big(X(\div(Y))+\div(Y)\div(X)-Y(\div(X))-\div(X)\div(Y)\big)\eta \\
&= \big(X(\div(Y))-Y(\div(X))\big) \eta.
\end{aligned}
$$

\end{proof}
\begin{Remark}
{\rm
It is not difficult to show the existence of $\nu$ such that $\eta=\nu\wedge\rho^*(\zeta)$, but clearly it is not unique. For example, we can define $\nu$ locally applying the implicit function theorem. For details, see Lemma 2.1 in \cite{Be}. 
}
\end{Remark}

Now we can prove the derivation formula given in Theorem \ref{smoothint}.

\begin{proof}[Proof of Theorem \ref{smoothint}]
We shall separate the proof in three cases: $N=\R$ and $X=\frac{\partial}{\partial\lb}$, $N=\R^k$ and $X$ is arbitrary, and finally the general case.

{\bf Case 1, $N=\R$ and $X=\frac{\partial}{\partial\lb}$.}

Fix $\lb^0\in\R$ and let $h>0$ be small enough. Consider the submanifold with boundary
$$
M_{\lb^0,\lb^0+h}=\{x\in M \mid \lb^0\leq\rho(x)\leq\lb^0+h\}.
$$ 
Clearly $\partial M_{\lb^0,\lb^0+h}=M_{\lb^0}\cup M_{\lb^0+h}$ and the outgoing normal vector is $v=\frac{1}{||\nabla\rho||}\nabla\rho$. Therefore, the divergence theorem implies that 
$$
\int_{M_{\lb^0+h}}f\mu_{\lb^0+h}-\int_{M_{\lb^0}}f\mu_{\lb^0}= \int_{M_{\lb^0,\lb^0+h}}\div(f ||\nabla\rho||^{-1} v)\eta.
$$
Using Lemma \ref{l1} and coarea formula (see Appendix A or \cite{Fe}) on the right hand side, we obtain
$$
F(\lb^0+h)-F(\lb^0)=\int_{\lb^0}^{\lb^0+h}\left(\int_{M_\lb}||\nabla\rho||^{-1}\div\left(f \frac{\check\partial}{\partial\lb}\right)\eta_\lb\right)\de\lb.
$$
Therefore, the fundamental theorem of calculus implies that 
 $$
\frac{\partial F}{\partial\lb}(\lb)=\int_{M_\lb}\div\left(f \frac{\check\partial}{\partial\lb}\right)\mu_{\lb}.
$$

{\bf Case 2, $N=\R^k$ and $X$ is arbitrary.}

Let $\rho=\rho_1\times\cdots\times\rho_k$. Notice that
$$
F(\lb)=\int_{M_\lb}(||\pi^i(\nabla\rho_i)||J^{-1}f)||\pi^i(\nabla\rho_i)||^{-1}\eta_\lb.
$$
Applying the previous case with $M=M^i_\lb$ defined in \eqref{mjlb} and using Lemma \ref{l1}, we obtain
 $$
 \begin{aligned}
 \frac{\partial F}{\partial\lb_i}(\lb)&=\int_{M_\lb}\div\left(||\pi^i(\nabla\rho_i)||J^{-1} f \frac{\check\partial}{\partial\lb_i}\right)||\pi^i(\nabla\rho_i)||^{-1}\eta_{\lb} \\
&=\int_{M_\lb}J_i\div\left(J_i^{-1} f \frac{\check\partial}{\partial\lb_i}\right)\mu_{\lb}.
\end{aligned}
$$
Since $\frac{\check \partial}{\partial\lb_i}\in TM^i_\lb$ and divergence in the previous identity is computed on $M^i_\lb$ with respect to $\eta^i_\lb$, Lemma \ref{l2} implies our result for $f\in C_c^r(M)$ and $X=\frac{\partial}{\partial\lb_i}$. If $X=\sum_i a_i\frac{\partial}{\partial\lb_i}$, with $a_i\in C^\infty(\R^k)$, then
$$
\begin{aligned}
\int_{M_\lb}\div(f \check X)\,\mu_{\lb} & =\sum_i a_i(\lb)\frac{\partial F}{\partial\lb_i} (\lb)+\sum_i \int_{M_\lb}f \frac{\check\partial}{\partial\lb_i}(a_i\circ\rho)\,\mu_{\lb} \\
&=X F (\lb)+(\sum \frac{\partial a}{\partial\lb_i})(\lb)F(\lb).
\end{aligned}
$$
{\bf Case 3, arbitrary N.}

For the general case, take $\lb\in V\subseteq N$ and $\Psi:V\to W\subseteq\R^k$
a local coordinate. Notice that
$$
F(\lb)=\int_{M_\lb}(J_{\Psi}\circ\rho) f J_{\rho}^{-1}(J_{\Psi}\circ\rho)^{-1}\eta_\lb,
$$
where $J_{\Psi}$ is the Jacobian of $\Psi$. Since $J_{\Psi\circ\rho}=(J_{\Psi}\circ\rho) J_{\rho}$, applying the previous case to $\Psi\circ\rho$, we obtain
$$
\begin{aligned}
XF(\lb)&=\int_{M_\lb}[\div((J_{\Psi}\circ\rho) f \check X)-\div D\Psi X(\Psi(\lb))J_{\Psi}(\lb)f]J_{\Psi}^{-1}(\lb)J_{\rho}^{-1}\eta_\lb
\\
&=\int_{M_\lb}\div(f \check X)+[J_{\Psi}^{-1} X(J_{\Psi})-\div(D\Psi X)\circ\Psi](\lb)f\mu_\lb.
\end{aligned}
$$
Part b) of Lemma \ref{l2} finishes the proof.
\end{proof}
\begin{Remark}
{\rm
The only step where we used that $f$ has compact support was when we applied the divergence theorem over the space $M_{\lb^0,\lb^0+h}$, but such identity would also hold if $M_{\lb^0,\lb^0+h}$ is compact. Therefore, if $\rho$ is proper our derivation formula holds for any $f\in C^r(M)$.
}
\end{Remark}
\begin{Remark}
{\rm
Clearly the map 
$$
\tilde F(\lb)=\int_{M_\lb}f\eta_{\lb}\,,
$$
obtained by replacing $\mu_\lb$ by $\eta_\lb$, is also smooth and we have that
$$
X\tilde F(\lb)=\int_{M_\lb}J^{-1}\div(J f\check X )-\div X(\lb)f\,\eta_{\lb}=\int_{M_\lb}\div(f\check X )-\div X(\lb)f+J^{-1}\check X(J)f\,\eta_{\lb}.
$$

} 
\end{Remark}



\subsection{Smooth structure and trivialization} \label{secsst}

In this subsection, we denote by $H\to N$ the field of Hilbert spaces $\mathcal{H}(\lb)$ defined in \eqref{hilbertexample}. We will use Theorem \ref{smoothint} to endow $H\to N$ with a smooth structure. However, it is not clear if this smooth field of Hilbert spaces admits a trivialization, unless further geometrical conditions are assumed. For instance, we apply Corollary \ref{corisoc} to show that if $\rho$ is also a proper map, then $p:H\to N$ admits a trivialization. Indeed, under that assumption $\rho$ defines a fiber bundle (Ehresmann's Theorem). In particular, there is a smooth manifold $K$ such that  locally each $M_\lb$ is diffeomorphic to $K$. Using a trivialization of the vector bundle $\pi: E\to M$ we can define a the required unitary operators $T(\lb)$ (see Theorem \ref{trivex1}).

Since $F\cong\pi^{-1}(x)$ is a finite-dimensional Hilbert space, it follows that $E\to M$ can be considered as a field of Hilbert spaces. Let $\Gamma_c^\infty(E)$ be the space of compact supported sections. 
Notice that for a given $\varphi\in\Gamma_c^\infty(E)$ the restriction 
$\varphi|_{M_\lambda}$ lies in $\mathcal{H}(\lb)$ for all $\lambda\in N$. Moreover, we can extend sections defined on $M_\lb$ to sections on $M$ by using the same argument applied to extend functions proposed in \cite[Lemma 5.34]{Le}.

\begin{Lemma}\label{compsec}
Let $\lambda\in N$ and let $f\in\Gamma^\infty_c(E_\lb)$ be a smooth section  with compact support. There exists a compact supported section $\varphi\in\Gamma^\infty_c(E)$\, such that $\varphi|_{M_\lambda}=f$.
\end{Lemma}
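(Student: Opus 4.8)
The plan is to reduce the bundle-valued extension problem to the extension of scalar functions, for which the argument of \cite[Lemma 5.34]{Le} applies, and then to glue the resulting local extensions by a partition of unity chosen so as to keep the support compact. First I would record that, since $\rho$ is a submersion, $M_\lb=\rho^{-1}(\lb)$ is a closed embedded submanifold of $M$; hence $K:=\operatorname{supp}(f)$ is a compact subset of $M$, and around each point of $M_\lb$ there is an open set $U\subseteq M$ that is simultaneously a local trivializing neighbourhood for $E$ (with smooth frame $e_1,\dots,e_r$, where $r=\dim F$) and a slice chart for $M_\lb$. Covering the compact set $K$ by finitely many such sets $U_1,\dots,U_n$ with frames $(e^i_a)_{a=1}^r$ fixes the local data.

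On each $U_i$ I would write the restriction of $f$ in the given frame, $f|_{M_\lb\cap U_i}=\sum_{a} f^i_a\,e^i_a$, where each $f^i_a$ is a smooth complex function on the slice $M_\lb\cap U_i$. Using the slice coordinates, one extends $f^i_a$ to a smooth function $\tilde f^i_a\in C^\infty(U_i)$ (for instance by declaring it constant along the normal slice directions, exactly as in \cite[Lemma 5.34]{Le}), and then sets $\varphi_i:=\sum_a \tilde f^i_a\,e^i_a\in\Gamma^\infty(E|_{U_i})$. By construction $\varphi_i$ restricts to $f$ on $M_\lb\cap U_i$.

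To globalize while preserving compact support, I would choose a smooth partition of unity $\{\psi_i\}_{i=1}^n$ with $\operatorname{supp}(\psi_i)$ a compact subset of $U_i$ and with $\sum_{i=1}^n\psi_i\equiv 1$ on some open neighbourhood $W$ of $K$ (such a partition exists because $K$ is compact and $K\subseteq\bigcup_i U_i$). Setting $\varphi:=\sum_{i=1}^n\psi_i\varphi_i$ yields a smooth section of $E$ whose support is contained in the finite union $\bigcup_i\operatorname{supp}(\psi_i)$, hence compact, so $\varphi\in\Gamma_c^\infty(E)$. It then remains to check $\varphi|_{M_\lb}=f$: for $x\in M_\lb$, every index $i$ with $\psi_i(x)\neq 0$ satisfies $x\in M_\lb\cap U_i$, whence $\varphi_i(x)=f(x)$ and therefore $\varphi(x)=\big(\sum_i\psi_i(x)\big)f(x)$. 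If $x\in K$ then $x\in W$ and the sum equals $1$, giving $\varphi(x)=f(x)$; if $x\notin K$ then $f(x)=0$ and again $\varphi(x)=0=f(x)$.

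The only genuinely delicate step is the local extension of the coefficient functions $f^i_a$ off the submanifold, which is precisely where the slice-chart argument of \cite[Lemma 5.34]{Le} is invoked; once that is in place, the passage from functions to sections via a frame and the globalization via a partition of unity are routine. I expect the bookkeeping needed to guarantee both $\varphi|_{M_\lb}=f$ and the compactness of $\operatorname{supp}(\varphi)$---handled above by requiring $\sum_i\psi_i\equiv 1$ on a neighbourhood of $K$ rather than merely on $K$---to be the main point demanding care.
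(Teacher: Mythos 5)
Your proposal is correct and takes essentially the same route as the paper's proof: slice charts for the embedded submanifold $M_\lambda$, local extensions in the spirit of \cite[Lemma 5.34]{Le}, gluing by a partition of unity, and an arrangement to keep the support compact. If anything, your version is slightly more careful on two points where the paper is terse: you extend the \emph{coefficient functions of $f$ in a local frame} (whereas the paper extends by composing $f$ with a local retraction onto $M_\lambda$, which strictly speaking lands in the wrong fibers unless a trivialization is invoked), and you cover only the compact support with finitely many charts so that compactness of $\operatorname{supp}(\varphi)$ is automatic, instead of building a global extension and multiplying by a bump function afterwards.
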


Since $\rho\equiv\lb$ on $M_\lb$, $\Gamma_c^{\infty}(E)$ is a $C^\infty (N)$-module, where the multiplication between $a\in C^\infty(N)$ and a section $\varphi$ is defined by $(a\circ\rho)\varphi$. 

Now we are in position to show Theorem \ref{hilejem}.

\begin{proof}[Proof of Theorem \ref{hilejem}]
The first two conditions of i)  in Definition \ref{def} are straightforward. To show the third one, we compute
$$
\begin{aligned}
\nabla_{X}(a \varphi) &=\nabla^{E}_{\check X}(a \varphi)+\frac{1}{2}(\div(a\check X)-\div(aX)\circ\rho) \varphi \\
&= \check X(a\circ \rho) \varphi+a\nabla^{E}_{\check X}( \varphi)+ \frac{1}{2}\big( \check X(a\circ\rho)+a\div(X) - X(a)- a\div(X) \big)\varphi 
\\
&= X(a) \varphi+a\nabla^{E}_{\check X}( \varphi)+ \frac{1}{2}\big(   X(a)+a\div(\check X) - X(a) -a\div(X) \big)\varphi
\\
&= X(a) \varphi+ a\nabla^{E}_{\check X}( \varphi)+a\frac{1}{2}(\div(\check X)-\div(X)\circ\rho) \varphi \\
&=X(a) \varphi+ a\nabla_X( \varphi)\,,
\end{aligned}
$$
where we have used the identity $\div(aX)=X(a)+a\div(X)$\,. The condition \emph{ii)} follows from Theorem \ref{smoothint}. Indeed, we compute for $X\in\vect(N)$,
$$
\begin{aligned}
Xh( \varphi, \psi)(\lambda) &= 
  \int_{M_\lb}\! \check X(h^E( \varphi, \psi))+\big(\div(\check X)-\div( X)\big)h^E( \varphi, \psi)\,\mu_{\lb} 
  \\&=
  \int_{M_\lb} \!h^E(\nabla^{E}_{\check X} \varphi, \psi)+ \frac{1}{2}\big(\div(\check X)-\div( X)\big)h^E( \varphi, \psi) \,\mu_\lambda 
\\&\;\qquad +\int_{M_\lambda}\!
h^E( \varphi,\nabla^{E}_{\!\bar{\check X}} \psi) +\frac{1}{2}\big(\div(\check X)-\div( X)\big)h^E( \varphi, \psi) \,\mu_\lambda 
\\&=
  \int_{M_\lb} \!h^E\big(\nabla^{E}_{\check X} \varphi+\frac{1}{2}\big(\div(\check X)-\div( X)\big) \varphi, \psi\big) \,\mu_\lambda 
\\&\;\qquad +\int_{M_\lambda}\!
h^E\big( \varphi,\nabla^{E}_{\check{\bar{X}}} \psi+\frac{1}{2}\big(\div(\check{\bar{X}})-\div(\bar{X})\big) \,\mu_\lambda \\
&= h(\nabla_X  \varphi, \psi)(\lambda)+h( \varphi,\nabla_{\bar{X}} \psi)(\lambda)\,,
\end{aligned}
$$
where $h^E$ denotes the Hermitian form in the field $E\to M$\,. 
Condition iii) follows by noticing that the set of compact supported smooth sections of $\Gamma^\infty(E_\lb)$ are dense in $\mathcal{H}(\lambda)$ and, by the Lemma \ref{compsec}  all compact supported section have a smooth compactly supported extension to $M$. 

We now compute the curvature of $\nabla$. 
$$
\begin{aligned}
\nabla_X\nabla_Y-\nabla_Y\nabla_X-\nabla_{[X,Y]} &=  R^E(\check{X},\check{Y})+\frac{1}{2}(\check X(\div\check Y)-\check Y(\div\check X)-\div[\check X,\check Y])\\
&\qquad +\frac{1}{2}( X(\div Y)- Y(\div X)-\div[ X, Y])\,.
\end{aligned}
$$
By part c) Lemma  \ref{l2}, it follows that $R(X,Y)= R^E(\check X,\check Y)$\,.
\end{proof}

We will show that, if $\rho$ defines a fiber bundle, then it induces a trivialization for field of Hilbert spaces $H\to N$ in a natural way. Let $K$ be the fiber of $\rho$. Thus, there is a open covering $\{U_i\}$ of $N$ and a family of difeomorphisms $\Phi_i:\rho^{-1}(U_i)\to U_i\times K$\, such that $\rho=\operatorname{proj}_1\circ\Phi_i$, where $\operatorname{proj}_1$ is the projection in the first coordinate. Let $(V_i,F,T^{E}_i)_{i\in I}$ be a local trivialization of the Hermitian finite-dimensional vector bundle $E\to M$. Refining the covering of $N$ if it is necessary, we can assume that 
$$
\rho^{-1}(U_i)= V_i.
$$
Recall that, for each $Y\in\vect(V_i)$, there is a smooth field of operators $\tilde{A}_i^{E}(Y):C^\infty(V_i,F)\to C^\infty(V_i,F)$ such that
\begin{equation}\label{trivet}
T^{E}_i\nabla^{E}_Yf= Y(T^{E}_i f )+ \tilde{A}_i^{E}(Y)T^{E}_if\,.
\end{equation}
For $\lambda\in N$, define the map 
\begin{equation}\label{deftl}
    \begin{aligned}
        &T_i(\lb): L^2(E_\lb)\to L^2(K,F) \\
        \big[T_i(\lambda) u\big](k) &= T^{E}_i(\Phi_i^{-1}(\lambda,k))\big[\big(J_{\Phi_i}^{-1/2}\cdot  u\big)\big( \Phi_i^{-1}(\lambda,k)\big)\big]\,,
    \end{aligned}
\end{equation}
where $J_{\Phi_i}$ denotes the Jacobian of $\Phi_i$. Since each $T^{E}_i(x)$ is unitary, the change of variable formula implies that each $T_i(\lb)$ is unitary. 
Let us prove that $T_i(\lb)$ defines a local smooth trivialization of the smooth field of Hilbert spaces $(H\to N,\Gamma^\infty_c(E),\nabla)$, where $\nabla$ is defined in Theorem \ref{hilejem}.

\begin{Theorem}\label{trivex1}
Let  $\rho:M\to N$ be a fiber bundle with fiber $K$ and let $\pi:E\to M$ be a finite-dimensional Hermitian vector bundle with fiber $F$. Let $\{U_i\}_{i\in I}$ be a covering of $N$ and $\Psi_i:\rho^{-1}(U_i)\to U\times K$  a local trivialization for $\rho:M\to N$, such that  $V_i=\rho^{-1}(U_i)$ is a open covering of $M$ over which we can define a local trivialization $(V_i,F,T^{E}_i)_{i\in I}$\,for $\pi:E\to M$.
Let $\tilde{A}_i^{E}$ and $T_i(\lb)$ defined by equation \eqref{trivet} and equation  \eqref{deftl} respectively.
Fix a volume form $\eta_0$ on $K$. If $\tilde{A}_i^{E}(\check X)\in C^\infty_{b}(\rho^{-1}(C),\mathcal B(F))$ for every $C\subset U_i$ compact and every $X\in\vect{(U_i)}$, then
the family $(U_i,L^2(K,F),T_i)_{i\in I}$ is a smooth local trivialization of the field $H\to N$ and the operators $\tilde{A}_i$ are given by
$$
(\tilde{A}_i(X)f)(\lb,k)=\tilde{A}_i^{E}(\check X)(\Phi_i^{-1}(\lb,k))f(\lb,k)\,,
$$
for each $f\in C^\infty(U_i, L^2(K, F))$.
\end{Theorem}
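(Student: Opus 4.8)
The plan is to deduce the statement from Corollary \ref{corisoc}, applied with $\Hi_i=L^2(K,F)$ and $\tilde\Gamma_i^\infty:=T_i(\Gamma_c^\infty(E)|_{U_i})$. First I would verify the standing hypotheses of that corollary: that $\tilde\Gamma_i^\infty$ is a dense smooth subspace of $C^\infty(U_i,L^2(K,F))$ and that $T_i$ is a smooth field of operators, i.e. $T_i\in\AA^\infty(\Gamma_c^\infty(E)|_{U_i},\tilde\Gamma_i^\infty)$. Density is obtained from the density of $\Gamma_c^\infty(E_\lambda)$ in each $L^2(E_\lambda)$ together with Lemma \ref{compsec}, which extends fibrewise compactly supported data to a global section in $\Gamma_c^\infty(E)$; the smooth-submodule property and the mapping properties of $T_i$ and $T_i^*$ follow by inspecting the explicit formula \eqref{deftl}, using that $\Gamma_c^\infty(E)$ is a $C^\infty(N)$-module under $(a\circ\rho)\varphi$. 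Granting this, the final assertion of Corollary \ref{corisoc} reduces everything to showing (i) that $\tilde A_i(X)=-\hat\nabla_X(T_i)T_i^*$ is the multiplication operator in the statement, and (ii) that it belongs to $\AA^\infty_{lb}(\tilde\Gamma_i^\infty)$.

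The core is (i). I would work from the identity $T_i\nabla_X=XT_i+\tilde A_i(X)T_i$ and evaluate it on $\varphi\in\Gamma_c^\infty(E)|_{U_i}$ pointwise at $(\lambda,k)$, abbreviating $q=\Phi_i^{-1}(\lambda,k)$ and $g:=(T_i\varphi)\circ\Phi_i$. Expanding $T_i\nabla_X\varphi$ with \eqref{deftl}, the term $\nabla^E_{\check X}$ in the connection of Theorem \ref{hilejem} is rewritten by \eqref{trivet} as a $\check X$-directional derivative of $g$ plus multiplication by $\tilde A_i^E(\check X)$, while commuting the scalar $J_{\Phi_i}^{-1/2}$ past $\nabla^E_{\check X}$ via Leibniz contributes a factor $\tfrac12\check X(\ln J_{\Phi_i})$. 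The remaining piece of the connection contributes $\tfrac12(\div\check X-\div X\circ\rho)$. Meanwhile $X(T_i\varphi)$ is, by the chain rule, the derivative of $g$ along the lift of $X$ determined by $\Phi_i$. The plan is to show that the Jacobian derivative $\tfrac12\check X(\ln J_{\Phi_i})$ and the divergence term $\tfrac12(\div\check X-\div X\circ\rho)$ together account exactly for the difference between these two derivatives of $g$, so that after cancellation only multiplication by $\tilde A_i^E(\check X)(q)$ survives; here the calibrations $\mu_\lambda=J_\rho^{-1}\eta_\lambda$ and the choice of $J_{\Phi_i}$ enforcing unitarity of $T_i$ are used in full.

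For (ii), once $\tilde A_i(X)$ is identified as multiplication (in $k$) by $\tilde A_i^E(\check X)\circ\Phi_i^{-1}$, Remark \ref{sder} shows that in the trivial field $U_i\times L^2(K,F)$ every iterated derivative $\hat\nabla_{X_1}\cdots\hat\nabla_{X_m}(\tilde A_i(X))$ is again multiplication by a corresponding derivative of that $\mathcal B(F)$-valued function. The hypothesis $\tilde A_i^E(\check X)\in C^\infty_b(\rho^{-1}(C),\mathcal B(F))$ for every compact $C\subseteq U_i$ bounds these functions, together with all their derivatives, uniformly over $\rho$-preimages of compacta, which is precisely local uniform boundedness on $U_i$; hence $\tilde A_i(X)\in\AA^\infty_{lb}(\tilde\Gamma_i^\infty)$. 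By Corollary \ref{corisoc} this gives $T_i\in\AA^\infty_{lb}(\Gamma_c^\infty(E)|_{U_i},\tilde\Gamma_i^\infty)$, so $T_i$ is a Fréchet isomorphism $\Gamma^\infty(U_i)\to C^\infty(U_i,L^2(K,F))$; in particular $T_i(\Gamma^\infty(U_i))=C^\infty(U_i,L^2(K,F))$, which is exactly the smooth local trivialization condition of Definition \ref{LT}.

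The step I expect to be the main obstacle is the cancellation in (i). One must verify that the derivative of $g$ taken along the $\Phi_i$-lift of $X$ (which is what $X(T_i\varphi)$ produces) agrees, modulo the scalar divergence-and-Jacobian corrections, with the covariant derivative along the orthogonal lift $\check X$, leaving no residual fibre-tangent differential operator. This is delicate precisely because $\check X$ is the normal (Ehresmann) lift of $X$ rather than the lift induced by the trivialization $\Phi_i$, so the whole effect of the geometry — the orthogonal splitting and the measure $\mu_\lambda$ — is concentrated in reconciling these two differentiations.
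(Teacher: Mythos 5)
Your overall route is the paper's: reduce to Corollary \ref{corisoc} with $\tilde\Gamma_i^\infty=T_i(\Gamma^\infty|_{U_i})$ dense and smooth, identify $\tilde A_i(X)=-\hat\nabla_X(T_i)T_i^*$ as multiplication by $\tilde A_i^E(\check X)\circ\Phi_i^{-1}$, and get local uniform boundedness of all iterated $\hat\nabla$-derivatives from the hypothesis $\tilde A_i^E(\check X)\in C^\infty_b(\rho^{-1}(C),\mathcal B(F))$; your part (ii) matches the paper's computation almost line by line. The genuine gap is your part (i), which you state as a plan and yourself flag as the main obstacle: it cannot be closed by the mechanism you describe. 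By the chain rule, $X(T_i\varphi)=\bigl(X^{\Phi_i}\,T_i^E[J_{\Phi_i}^{-1/2}\varphi]\bigr)\circ\Phi_i^{-1}$, where $X^{\Phi_i}:=D\Phi_i^{-1}(X\oplus 0)$ is the lift determined by the trivialization, and $V:=X^{\Phi_i}-\check X$ is tangent to the fibres $M_\lb$. The discrepancy you must absorb is therefore differentiation along $V$, a \emph{first-order} operator in the $K$-variable, while the corrections at your disposal, $\frac12\check X(\ln J_{\Phi_i})$ and $\frac12(\div\check X-\div X\circ\rho)$, are zeroth order; moreover they are already used up converting $\nabla^E_{\check X}+\frac12(\div\check X-\div X\circ\rho)$ into the plain $\check X$-derivative of $T_i^E[J_{\Phi_i}^{-1/2}\varphi]$ via \eqref{trivet} and Lemma \ref{l2} b). No multiplication operator can cancel a derivative. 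Concretely: take $M=\R^2\ni(x,y)$ flat, $N=\R$, $\rho(x,y)=x$, $E=M\times\C$ with trivial connection and $T^E=\mathrm{id}$ (so $\tilde A^E\equiv 0$), and the bundle trivialization $\Phi(x,y)=(x,\,y-f(x))$; then $J_\Phi\equiv 1$, $(T\varphi)(\lb,k)=\varphi(\lb,k+f(\lb))$, and $\tilde A\bigl(\tfrac{d}{d\lb}\bigr)=(T\nabla_{d/d\lb}-\tfrac{d}{d\lb}T)T^*=-f'(\lb)\,\partial_k$, an unbounded first-order operator rather than multiplication by $0$, and indeed $T(\Gamma^\infty(U))\subsetneq C^\infty(U,L^2(K))$ when $f'\neq 0$.

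What closes the gap — and what the paper's own proof does silently — is the identification $D\Phi_i(\check X)=X\oplus 0$, i.e.\ the $\Phi_i$-horizontal distribution coincides with the orthogonal complement $(T M_\lb)^\perp$. The paper writes $X(T_i\varphi)=(\check X\,T_i^E[J_{\Phi_i}^{-1/2}\varphi])\circ\Phi_i^{-1}$ without comment, and the same identification is needed a second time to get $J_{\Phi_i}^{-1}\check X(J_{\Phi_i})=\div X\circ\rho-\div\check X$ from Lemma \ref{l2} b) (one must evaluate $\div_{\zeta\times\eta_0}(D\Phi_i\check X)$ as $\div_\zeta(X)$). Under that compatibility there is nothing left to cancel, $\tilde A_i(X)$ is exactly the stated multiplication operator, and the remainder of your argument goes through as in the paper. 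So your instinct about where the difficulty sits is exactly right, but your proposal leaves that step open, and the cancellation you hope for is false for a general fibre-bundle trivialization $\Phi_i$: the compatibility of $\Phi_i$ with the Riemannian splitting must either be added as a hypothesis or built into the construction of $\Phi_i$ (e.g.\ by integrating the lifts $\check X$, as in the proof of Ehresmann's theorem used for Proposition \ref{prosub}).
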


\begin{Remark}\label{prorho}
{\rm 
    Since $F$ is finite-dimensional, once one fix a basis, $\tilde{A}_i^{E}(\check X)\in  C^\infty_{b}(\rho^{-1}(C),\mathcal B(F))$ if and only if the entries of the corresponding matrix belongs to  $C^\infty_{b}(\rho^{-1}(C))$.  
    }
\end{Remark}

\begin{proof}
 
We will apply Corollary \ref{corisoc}. By definition, $\tilde{\Gamma}^\infty_i=T_i(\Gamma^\infty|_{U_i})=C^\infty_c(U_i\times K,F)$ which it is a dense smooth subspace of  $C^\infty(U_i,L^2(K,F))$.

It remains to show that the field of operators $\tilde{A}_i(X)$ and all its derivatives are locally uniformly bounded, for every $X\in\vect{(U_i)}$. Let us compute $\tilde{A}_i(X)$.
Let $\varphi\in\Gamma^\infty|_{U_i}=\Gamma_c^\infty(U_i,E)$, and we use the identification  $(T_i\varphi)(\lb,k)=[T_i(\lb)\varphi(\lb)](k)=[T_i(\lb)\varphi|_{M_\lb}](k)$\,. Using this, one can rewrite
$$
\big(T_i\varphi\big)= \big(T^{E}_i[(J^{-1/2}_{\Phi_i}\cdot \varphi)]\big)\circ \Phi_i^{-1} \,.
$$
Hence, considering $X$ as a field on $U_i\times K$ acting trivially on $K$, we have that 
$$
\big(XT_i\varphi\big)(\lb,k) = \big(\check{X}T^{E}_i[J_{\Phi_i}^{-1/2}\cdot \varphi]\big)(\Phi_i^{-1}(\lb,k))\,.
$$
Then, the relation \eqref{trivet} implies that 
$$
\begin{aligned}
\big(XT_i\varphi\big)(\lb,k) &= \big(\check{X}T^{E}_i[J_{\Phi_i}^{-1/2}\cdot \varphi]\big)(\Phi_i^{-1}(\lb,k)) \\
&=\Big(  T^{E}_i\nabla^{E}_{\check{X}}  \big[J_{\Phi_i}^{-1/2} \cdot \varphi\big]+ \tilde{A}_i^{E}(\check X)T^{E}_i\big[J_{\Phi_i}^{-1/2} \cdot \varphi\big]\Big)(\Phi_i^{-1}(\lb,k))\, .
\end{aligned}
$$
Using part b) of Lemma \ref{l2} we have that 
$$
\begin{aligned}
    \nabla^{E}_{\check{X}}  \big[J_{\Phi_i}^{-1/2} \cdot \varphi\big] &= \check X(J_{\Phi_i}^{-1/2})\varphi+ J_{\Phi_i}^{-1/2} \nabla^{E}_{\check X}\varphi \\
    &= J_{\Phi_i}^{-1/2}\Big(-\frac{1}{2}J_{\Phi_i}^{-1}\check X(J_{\Phi_i}) \varphi+  \nabla^{E}_{\check X}\varphi\Big) \\
    &=J_{\Phi_i}^{-1/2}\Big(-\frac{1}{2}(\div(\check X)-\div(X)\circ\rho)\varphi+  \nabla^{E}_{\check X}\varphi\Big) \\
    &=J_{\Phi_i}^{-1/2} \nabla_{\check{X}} \varphi\,.
\end{aligned}
$$
Hence, we obtain 
$$
(X T_i \varphi)(\lb,k) =\Big(T^{E}_i\big[J_{\Phi_i}^{-1/2} \nabla_{\check{X}} \varphi\big] 
+ \tilde{A}_i^{E}(\check{X})T^{E}_i [J_{\Phi_i}^{-1/2}\varphi]\Big) (\Phi_i^{-1}(\lb,k)) .
$$
Then, $XT_i\varphi= T_i\nabla_{\check X}\varphi+ (\tilde{A}_i^{E}(\check X)\circ\Phi^{-1}) T_i\varphi$. Therefore, we get that

$$
(\tilde{A}_i(X)f)(\lb,k)=\tilde{A}_i^{E}(\check X)(\Phi_i^{-1}(\lb,k))f(\lb,k)
$$
for $f\in C^\infty_c(U_i\times K, F)$\,. If $g\in C^\infty_c(K, F)$, the latter identity implies that 
$$
(\tilde{A}_i(X)g)(\lb,k)=\tilde{A}_i^{E}(\check X)(\Phi_i^{-1}(\lb,k))g(k)
$$
Thus, 
$$
[X_1\cdots X_n(\tilde{A}_i(X))g](\lb,k)=\check X_1\cdots \check X_n [\tilde{A}_i^{E}(\check X)](\Phi_i^{-1}(\lb,k))g(k),
$$
for every $g\in C^\infty_c(K, F)$ and $X_1,\cdots X_n,X\in\vect{(U_i)}$. Finally, 
$$
\sup_{\lb\in C}\|X_1\cdots X_n[\tilde{A}_i(X)](\lb)g\|\leq \sup_{x\in \rho^{-1}(C)}\left\{ \big\|\hat X_1\cdots \hat X_n [\tilde{A}_i^{E}(\check X)](x)\big\|\right\}\|g\|. 
$$
Since each $\tilde{A}_i^{E}(\check X)\in C^\infty_{b}(\rho^{-1}(C),\mathcal B(F))$, this finishes our proof.

\end{proof}

\begin{proof}[Proof of Proposition \ref{prosub}]
Since $\rho$ is proper, Ehresmann's Theorem implies that $\rho$ is a fiber bundle. Since $\rho^{-1}(C)$ is compact, $\tilde{A}_i^{E}(\check X)\in C^\infty(\rho^{-1}(C),\mathcal B(F))=C^\infty_b(\rho^{-1}(C),\mathcal B(F))$, for every $C\subset U_i$ compact and every $X\in\vect{(U_i)}$.     
\end{proof}

There are two particularly interesting cases that we would like to consider: the trivial line bundle $E=M\times F$, where $F$ is a finite-dimensional vector space, and the tangent bundle $E=TM$ endowed with the Levi-Civita connection.  

\begin{Corollary}\label{TlB}
Let $\rho:M\to N$ be a smooth submersion. Consider the field of Hilbert spaces $H\to N$ with $\Hi(\lb)=L^2(M_\lb;\mu_\lb)\otimes F=L^2(M_\lb,F)$.
\begin{enumerate}
    \item[i)] $H\to N$ together with the map $\nabla_X :C_c^{\infty}(M,F)\to C_c^\infty(M,F)$ given by
$$
\nabla_{X}(f)=\check X(f)+\frac{1}{2}(\div(\check X)-\div(X)\circ\rho)f
$$
becomes a flat smooth field of Hilbert spaces. 

    \item[ii)]  Assume that $\rho$ is a smooth fiber bundle and 
 $(U_i,K,\Phi_i)_{i\in I}$ be a smooth trivialization for $\rho$. Fix a volume form on $K$.  The family $(U_i,L^2(K,F),T_i)_{i\in I}$
    defines a smooth local trivialization of $H\to N$, where $T_i(\lb):L^2(M_\lb,F)\to L^2(K,F)$ is given by
    $$
    T_i(\lb)u(k)= J^{-1/2}_{\Phi_i} u(\Phi^{-1}_i(\lambda,k))\,.
    $$ 
Moreover, we have that $T_i\nabla_X\varphi=XT_i\varphi$, for every $X\in\vect(U_i)$ and $\varphi\in C^\infty_c(M,F)$.    
\end{enumerate}

\end{Corollary}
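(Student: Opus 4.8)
The plan is to obtain both parts by specializing the general results already established for Riemannian direct images to the trivial bundle $E=M\times F$ equipped with the trivial flat Hermitian connection $\nabla^E_Y f=Y(f)$, where a section of $E$ is identified with a function $f:M\to F$.

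For part i), I would first note that with this choice the space of compactly supported smooth sections is exactly $C_c^\infty(M,F)$ and the fiber Hilbert spaces are $L^2(M_\lb,F)$, as asserted. Since $\nabla^E_{\check X}f=\check X(f)$, the connection formula of Theorem \ref{hilejem} reduces verbatim to $\nabla_X(f)=\check X(f)+\frac12(\div(\check X)-\div(X)\circ\rho)f$. Theorem \ref{hilejem} then directly provides the smooth field of Hilbert spaces structure, and its curvature identity $R[X,Y]=R^E[\check X,\check Y]$ combined with the flatness of the trivial connection ($R^E=0$) yields $R[X,Y]=0$, proving that the resulting field is flat.

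For part ii), I would apply Theorem \ref{trivex1}, choosing the trivial local trivialization $T^E_i(x)=\mathrm{id}_F$ of $E\to M$ over each $V_i=\rho^{-1}(U_i)$. With this choice the Christoffel-symbol operator $\tilde A^E_i$ defined by \eqref{trivet} vanishes identically, so the hypothesis $\tilde A^E_i(\check X)\in C^\infty_b(\rho^{-1}(C),\mathcal B(F))$ is trivially satisfied, independently of whether $\rho$ is proper. Substituting $T^E_i=\mathrm{id}_F$ into \eqref{deftl} recovers the stated formula $T_i(\lb)u(k)=J_{\Phi_i}^{-1/2}u(\Phi_i^{-1}(\lambda,k))$. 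Theorem \ref{trivex1} then gives that $(U_i,L^2(K,F),T_i)_{i\in I}$ is a smooth local trivialization, with the associated operators $\tilde A_i(X)$ equal to multiplication by $\tilde A^E_i(\check X)\circ\Phi_i^{-1}=0$. Finally, feeding $\tilde A_i(X)=0$ into the identity $T_i\nabla_X=XT_i+\tilde A_i(X)T_i$ recorded just before \eqref{Atau} yields $T_i\nabla_X\varphi=XT_i\varphi$ for every $X\in\vect(U_i)$ and $\varphi\in C_c^\infty(M,F)$, which is the flatness of the trivialization.

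There is no serious obstacle here, since both assertions are clean specializations of Theorems \ref{hilejem} and \ref{trivex1}. The only point that genuinely deserves attention is the vanishing of $\tilde A^E_i$ for the trivial trivialization: this is precisely what removes the properness hypothesis present in Proposition \ref{prosub} and simultaneously upgrades the trivialization to a flat one, so I would make sure to state it explicitly rather than invoke Proposition \ref{prosub} directly.
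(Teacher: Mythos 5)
Your proposal is correct and follows exactly the route the paper intends: Corollary \ref{TlB} is stated without explicit proof precisely because it is the specialization of Theorems \ref{hilejem} and \ref{trivex1} to the trivial bundle $E=M\times F$ with the trivial flat connection, which is what you carry out (including the key observation that $T^E_i=\mathrm{id}_F$ forces $\tilde A^E_i=0$, removing the properness hypothesis and yielding $T_i\nabla_X=XT_i$). Your treatment is a faithful, slightly more explicit write-up of the paper's implicit argument.
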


Recall that the tangent bundle $TM$ of a Riemannian manifold $M$ admits a canonical Hermitian structure. The corresponding connection is called the Levi-Civita connection and we will denoted by $\nabla^L$. By definition the sections are vectors fields over $M$. In a local trivialization $(V_i,F)$,  the Levi-Civita connection is determinate by the so called Christoffel symbols $\Gamma_{kj}^l$, which are defined by the identity 
$$
\nabla^L_{\partial_k}\partial_j= \sum_l \Gamma^l_{kj}\partial_l\,.
$$
Thus, in our notation $A_i(\partial_j)$ in the canonical basis induced by the coordinate    system is the matrix with entrances $\Gamma_{kj}^l$. Then, we have the following result.  

\begin{Corollary}\label{LeCi}
Let $\rho:M\to N$ be a smooth submersion. Consider the field of Hilbert spaces $H\to N$ with $\Hi(\lb)=L^2((TM)_\lb)$. Let $\vect_c(M)$ be the space of smooth vector fields with compact support  over $M$ and $\nabla^L$ denotes the Levi-Civita connection.
\begin{enumerate}
\item[i)] $H\to N$ together with the map $\nabla:\vect(N)\times \vect_c(M)\to\vect_c(M)$ given by
$$
\nabla_XY=\nabla^{L}_{\check X}Y + \frac{1}{2}\big( \div(\check X)-\div(X)\circ \rho)Y\,
$$
becomes a smooth field of Hilbert spaces with curvature $R(X,Y)=R^g(\check X,\check Y)$, where $R^g$ is the Riemannian curvature of $M$. Moreover, the following formula holds
\begin{equation}\label{torsion}
    \nabla_X\check Y-\nabla_Y\check X= [\check X, \check Y]+ \frac{1}{2}\big( \div(\check X)-\div(X)\circ \rho)Y - \frac{1}{2}\big( \div(\check Y)-\div(Y)\circ \rho)X.
\end{equation}

\item[ii)] Assume that $\rho$ is a smooth fiber bundle and  $(U_i,K,\Phi_i)_{i\in I}$ be a smooth trivialization for $\rho$. Fix a volume form on $K$. If the Christoffel symbols $\Gamma_{kj}^l$ of  the Levi-Civita connection belongs to $C_b^{\infty}(\rho^{-1}(C))$ for each compact set $C\subset U_i$ and $i\in I$, then the family $(U_i,L^2(K,F),T_i)_{i\in I}$
defines a smooth local trivialization of $H\to N$, where $T_i(\lb):L^2(M_\lb,\mu_\lb)\to L^2(K,F)$ is given by equation \eqref{deftl}.
\end{enumerate}

\end{Corollary}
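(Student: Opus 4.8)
The plan is to obtain both parts as immediate specializations of the general results of Subsection \ref{secsst}, taking $E=TM$ and $\nabla^E=\nabla^L$, so that virtually no new argument is needed beyond recognizing that the Levi-Civita data fit the hypotheses there.

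For part \emph{i)}, I would first invoke Theorem \ref{hilejem}. Since $(TM,\nabla^L)$ is a finite-dimensional Hermitian bundle whose connection is metric (the Riemannian metric is parallel for $\nabla^L$), the displayed map $\nabla$ is exactly the connection produced by that theorem with the choice $\Gamma^\infty=\vect_c(M)=\Gamma^\infty_c(TM)$; hence $(H\to N,\vect_c(M),\nabla)$ is a smooth field of Hilbert spaces. The curvature identity $R[X,Y]=R^E[\check X,\check Y]$ of Theorem \ref{hilejem} then specializes to $R(X,Y)=R^g(\check X,\check Y)$, because by definition the curvature of the Levi-Civita connection is the Riemannian curvature tensor $R^g$. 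To establish \eqref{torsion} I would substitute the lifted fields $\check Y$ and $\check X$ as the sections into the defining formula for $\nabla$, subtract the two expressions, and use that $\nabla^L$ is torsion-free, i.e. $\nabla^L_{\check X}\check Y-\nabla^L_{\check Y}\check X=[\check X,\check Y]$. The two scalar factors $\tfrac12(\div(\check X)-\div(X)\circ\rho)$ and $\tfrac12(\div(\check Y)-\div(Y)\circ\rho)$ survive unchanged, each multiplying the corresponding lifted field, which is precisely \eqref{torsion}.

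For part \emph{ii)}, the strategy is to apply Theorem \ref{trivex1} to $E=TM$ with fiber $F\cong\R^{m}$, so that the only point requiring verification is its hypothesis $\tilde A_i^E(\check X)\in C_b^\infty(\rho^{-1}(C),\mathcal B(F))$ for every compact $C\subseteq U_i$ and every $X\in\vect(U_i)$. Working in the local frame $\{\partial_l\}$ attached to a coordinate chart, equation \eqref{trivet} identifies $\tilde A_i^E(\partial_j)$ with the matrix $(\Gamma^l_{kj})_{k,l}$ of Christoffel symbols, so that $\tilde A_i^E(\check X)=\sum_j \check X^{\,j}\,(\Gamma^l_{kj})_{k,l}$, where $\check X^{\,j}$ are the components of the lift $\check X$ in that frame. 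Consequently the stated condition $\Gamma^l_{kj}\in C_b^\infty(\rho^{-1}(C))$ translates directly into $\tilde A_i^E(\check X)\in C_b^\infty(\rho^{-1}(C),\mathcal B(F))$, and Theorem \ref{trivex1} yields the smooth local trivialization $(U_i,L^2(K,F),T_i)_{i\in I}$ with $T_i$ given by \eqref{deftl}.

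The main obstacle I anticipate lies entirely in part \emph{ii)}: it is the bookkeeping that expresses $\tilde A_i^E(\check X)$ explicitly through the Christoffel symbols and confirms that the boundedness hypothesis of Theorem \ref{trivex1} is genuinely equivalent to $\Gamma^l_{kj}\in C_b^\infty(\rho^{-1}(C))$. Here one must be careful that the components $\check X^{\,j}$ of the lifted field stay controlled on $\rho^{-1}(C)$, which need not be compact when $\rho$ is merely a fiber bundle rather than a proper map; so the clean reduction to a condition on the Christoffel symbols alone presupposes that the frame is chosen compatibly with the trivialization $\Phi_i$, ensuring these components remain bounded on the relevant sets.
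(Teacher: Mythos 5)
Your proposal matches the paper's proof essentially verbatim: the paper obtains everything from Theorems \ref{hilejem} and \ref{trivex1}, noting only that equation \eqref{torsion} is a direct consequence of the Levi-Civita connection being torsion-free, exactly as you argue. The boundedness subtlety you flag in part \emph{ii)} (controlling the components of the lift $\check X$ and their derivatives on the possibly non-compact sets $\rho^{-1}(C)$ so that the Christoffel-symbol hypothesis really yields $\tilde A_i^{E}(\check X)\in C_b^\infty(\rho^{-1}(C),\mathcal B(F))$) is not discussed in the paper at all, so your treatment is, if anything, more careful than the original.
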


\begin{proof}
    The only claim that does not follow from Theorems \ref{hilejem} and \ref{trivex1} is  equation \eqref{torsion}, but this is a direct consequence of the fact that the Levi-Civita connection is torsion free.  
\end{proof}

\subsection{Comparing with the holomorphic case}\label{Hcase}

In this subsection, we will analyze and discuss the similarities and differences between the construction of holomorphic direct images given in \cite{LS} and our Riemannian direct images. However, we should note that a proper comparison can only be made within a common framework. For instance, this will be the case if the involved manifolds are K\"{a}hler. We will leave this analysis for another opportunity.  

Let us briefly recall the framework in \cite{LS}. Let $M$ and $N$ be complex manifolds and let $\rho:M\to N $ be a holomorphic submersion. Fix a smooth form $\nu$ on $M$ such that it restricts to a volume form on each $M_\lb=\rho^{-1}(\{\lb\})$. Also, let $E=\pi:E \to M$ be a holomorphic vector bundle of finite rank with a Hermitian structure $h^E$. The required field of Hilbert spaces is defined just as we did after Definition \ref{xjmu}, but within the holomorphic category. Indeed, for each $\lb\in N$, let $E_\lb=\pi^{-1}(M_\lb)$, $E_\lb=\pi:E_\lb\to M_\lb$ and denote $\Hi_\lb$ the Hilbert spaces of holomorphic $L^2$-sections of $E_\lb$ with inner product given by
$$
h(\varphi,\psi)=\int_{M_\lb}h^{E}(\varphi,\psi)\nu.
$$
The construction of $\Gamma^\infty$ and $\nabla$ is described in detail in subsections 6.2 and 6.3 of Chapter II in \cite{LS}. We can notice at this point two main differences with our construction. First, $\Gamma^\infty$ is built up inductively in \cite{LS}, while in our case it is provided explicitly (it is the space compact supported sections). Secondly, in \cite{LS} the connection $\nabla$ is defined using the Chern connection on $E$ \cite{AMo}, while in our case we use any Hermitian connection. 

The main issue with the construction of the smooth structure on the field of Hilbert spaces $\{\Hi_\lb\}_{\lb\in N}$ is to guarantee that condition {\it iii)} in Definition \ref{def} holds. In order to overcome this problem, Lempert and Sz\"oke proved that if we required a certain geometrical property of $E$ together with the existence of a space of sections $A\subset C^\infty(E)$ satisfying certain (sharp) properties, then all the conditions in Definition \ref{def} hold. Let us explain in more detail their result.  Let us denote by $T^{1,0}M$ the holomorphic tangent bundle over $M$ and by $T^{0,1}M$ the antiholomorphic tangent bundle over $M$. We will denote by $\vect'{M}$ and $\vect''{M}$ the spaces of $(1,0)$ and $(0,1)$ vector fields respectively. Let us also denote by  $B_\lb:\Hi(\lb)\to H_\lb$ the Bergman projection. The required conditions are the following: 
\begin{enumerate}
\item[(i)] There is a family $\Xi\subset\vect'{N}$ that spans the bundle $T^{1,0}N$, and each $X\in\Xi$ has an integrally complete lift $\hat{X}\in\vect'{M}$.
\item[(ii)] There is a subspace $A\subset C^{\infty}(M,E)$ with the following properties. If $\varphi\in A$, then
\begin{enumerate}
             \item[a)] $\int_{M_\lb} h^E(\varphi)d\nu \in\R$ depends continuously on $\lb\in N$; and
             \item[b)] if $X\in \Xi$ and $\bar Y= X$, then $\div(\hat X)\varphi $, $\nabla^E_{\hat X}\varphi$,  $\nabla^E_{\hat Y}\varphi$, and $B\varphi\in A$. Further,
             \item[c)] if $u\in H_\lb$ and $\epsilon>0$, there is a $\varphi\in A$ such that $\int_{M_\lb} h^E(\varphi-u)d\nu<\epsilon$
         \end{enumerate}
     \end{enumerate}
     \begin{theorem*}{\cite[Theorem 7.2.1]{LS}}
        If conditions (i) and (ii) above hold then $H\to N$ together with $\Gamma^\infty$ and $\nabla$, defined in subsections 6.2 and 6.3 of Chapter II in \cite{LS}, becomes a smooth field of Hilbert spaces.
     \end{theorem*}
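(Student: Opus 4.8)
The plan is to realise $\Gamma^\infty$ and $\nabla$ as the fibrewise Bergman projections of the test space $A$ and then verify the three axioms of Definition \ref{def}, the decisive analytic input being a differentiation-under-the-integral formula built from the integrally complete lifts of condition (i). First I would attach to each $\varphi\in A$ the section $\lb\mapsto B_\lb(\varphi|_{M_\lb})\in H_\lb$ of the holomorphic field $\{H_\lb\}_{\lb\in N}$ and take $\Gamma^\infty$ to be the $C^\infty(N)$-module generated by these sections. Guided by the Riemannian connection of Theorem \ref{hilejem}, for $X\in\Xi$ with holomorphic lift $\hat X$ I would set
$$
\nabla_X\big(B\varphi\big)=B\big[\nabla^E_{\hat X}\varphi+\div_\nu(\hat X)\,\varphi\big],\qquad \nabla_{\bar X}\big(B\varphi\big)=B\big[\nabla^E_{\hat Y}\varphi\big],
$$
where $\hat Y=\overline{\hat X}$ is the lift of $\bar X$, and extend to all of $\vect(N)$ by $C^\infty(N)$-linearity; the whole divergence correction is carried by the $(1,0)$ part, the $(0,1)$ part being the bare $\bar\partial$-type derivative. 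Condition (ii)(b) is exactly what makes this legitimate: it guarantees that $\nabla^E_{\hat X}\varphi$, $\nabla^E_{\hat Y}\varphi$, $\div(\hat X)\varphi$ and $B\varphi$ all remain in $A$, so that $\nabla$ maps $\Gamma^\infty$ into itself and $\Gamma^\infty$ is a $\nabla$-stable $C^\infty(N)$-module.

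Granting this, axiom (i) of Definition \ref{def} is a purely algebraic check, identical in spirit to the one performed for Theorem \ref{hilejem}: additivity and $\nabla_{aX}=a\nabla_X$ come from the linearity of $X\mapsto\hat X$, of $\nabla^E$ and of $\div_\nu$, while the Leibniz rule $\nabla_X(a\varphi)=X(a)\varphi+a\nabla_X\varphi$ follows from the Leibniz rule of the Chern connection, the identity $\div_\nu(a\hat X)=\hat X(a)+a\div_\nu(\hat X)$, and the fact that $B_\lb$ is $\Hi(\lb)$-linear and restricts to the identity on $H_\lb$.

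The heart of the proof is axiom (ii). First I would establish the holomorphic counterpart of Theorem \ref{smoothint}: since $\hat X$ is an integrally complete lift, its flow $\Phi_t$ carries $M_\lb$ diffeomorphically onto the neighbouring fibres, so pulling back to the fixed fibre $M_\lb$ and differentiating under the integral sign gives, for $\varphi,\psi\in A$,
$$
X\!\int_{M_\lb}\! h^E(\varphi,\psi)\,\nu=\int_{M_\lb}\!\Big(\hat X\big(h^E(\varphi,\psi)\big)+\div_\nu(\hat X)\,h^E(\varphi,\psi)\Big)\nu,
$$
the divergence entering through $\mathcal L_{\hat X}\nu|_{M_\lb}=\div_\nu(\hat X)\,\nu|_{M_\lb}$. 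Continuity of $\lb\mapsto h(\varphi,\psi)(\lb)$ is condition (ii)(a), and smoothness follows by iterating this identity, each step remaining inside $A$ by (ii)(b). Metric compatibility then drops out upon substituting the Chern-connection identity $\hat X\big(h^E(\varphi,\psi)\big)=h^E(\nabla^E_{\hat X}\varphi,\psi)+h^E(\varphi,\nabla^E_{\hat Y}\psi)$: the $\nabla^E_{\hat X}$ term together with the whole divergence term reassembles the $\varphi$-slot into $\nabla_X\varphi$, the $\nabla^E_{\hat Y}$ term produces the $\psi$-slot $\nabla_{\bar X}\psi$, and since $B_\lb$ is self-adjoint, idempotent and fixes $H_\lb$ one may freely insert $B$ in front of either slot, yielding $Xh(\varphi,\psi)=h(\nabla_X\varphi,\psi)+h(\varphi,\nabla_{\bar X}\psi)$. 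It is enough to check this on $\Xi$ and its conjugates, the general case following by $C^\infty(N)$-linearity.

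Axiom (iii) is then immediate from (ii)(c): given $u\in H_\lb$ and $\epsilon>0$, choose $\varphi\in A$ with $\int_{M_\lb}h^E(\varphi-u)\,\nu<\epsilon$, and observe that $\|B_\lb(\varphi|_{M_\lb})-u\|=\|B_\lb(\varphi|_{M_\lb}-u)\|\le\|\varphi|_{M_\lb}-u\|<\sqrt\epsilon$, so $\Hi^\infty(\lb)$ is dense in $H_\lb$. I expect the main obstacle to be the coupling of the flow-based derivation formula with the Bergman projection. On the one hand, the differentiation under the integral must be justified uniformly in $\lb$, and this is exactly where integral completeness of the lifts in condition (i) is indispensable: it guarantees that the flows persist long enough and move the fibres $M_\lb$ in a controlled way. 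On the other hand, one must verify that projecting with $B$ genuinely produces a section of the holomorphic field all of whose $\nabla$-derivatives lie again in $\Gamma^\infty$; controlling the smooth dependence of $B_\lb$ on $\lb$, which underlies the closedness of $A$ under $B$ required in (ii)(b), is the genuinely delicate point and the one with no analogue in the elementary computation of Theorem \ref{hilejem}.
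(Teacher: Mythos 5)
A caveat about the comparison first: this statement is quoted from \cite{LS} (their Theorem 7.2.1), and the paper under review does not reprove it; it only explains which hypothesis feeds which axiom (condition (i) together with (ii)(a), (ii)(b) feeds the differentiation-under-the-integral formula of Lemmas 7.3.2--7.3.3 in \cite{LS}, and (ii)(c) feeds the density axiom). Note also that the theorem concerns the \emph{specific} $\Gamma^\infty$ and $\nabla$ built inductively in subsections 6.2--6.3 of \cite{LS}, so verifying the axioms for a module of sections you construct yourself is, strictly speaking, a different statement; still, your overall architecture (flow-based differentiation using the integrally complete lifts, (ii)(b) for stability of the section space under the connection, (ii)(c) for axiom (iii)) is the right one and matches the role the paper ascribes to each hypothesis.

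The genuine gap is in how you couple the Bergman projection with differentiation --- exactly the point you flag at the end as ``delicate'' and then leave unresolved; in your setup it is not a deferrable technicality but what your argument needs to close, and it only disappears if the definitions are set up differently. You define $\nabla_X(B\varphi)=B\big[\nabla^E_{\hat X}\varphi+\div_\nu(\hat X)\,\varphi\big]$ through the representative $\varphi\in A$ sitting \emph{inside} the projection. This is (a) not obviously well defined: if $B\varphi_1=B\varphi_2$ as sections of the field, nothing forces $B\big[(\nabla^E_{\hat X}+\div_\nu\hat X)(\varphi_1-\varphi_2)\big]=0$, since $\nabla^E_{\hat X}+\div_\nu\hat X$ need not preserve $\ker B_\lb$; and (b) incompatible with your metric-compatibility computation. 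Indeed $h(B\varphi,B\psi)=\int_{M_\lb} h^E(B\varphi,\psi)\,\nu$, so your flow formula, applied legitimately (since $B\varphi\in A$ by (ii)(b)), produces the term $\nabla^E_{\hat X}(B\varphi)$, whereas your connection produces $B[\nabla^E_{\hat X}\varphi]$; matching the two requires commuting $B$ past $\nabla^E_{\hat X}$, i.e.\ controlling the $\lb$-dependence of $B_\lb$, which you never establish. Moreover the step ``one may freely insert $B$ in front of either slot'' is false for general $\varphi,\psi\in A$: $\langle B u,v\rangle_\lb=\langle u,v\rangle_\lb$ holds only when $v$ is fiberwise holomorphic, and elements of $A$ are not. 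The repair --- and, as far as the paper's discussion of \cite{LS} indicates, the actual route --- is to let the field sections be themselves fiberwise holomorphic and to apply the connection to the section itself with $B$ outermost, $\nabla_X s=B\big[(\nabla^E_{\hat X}+\div_\nu\hat X)s\big]$. Then well-definedness is automatic; in the compatibility computation both slots of $h$ carry fiberwise holomorphic sections, so inserting $B$ by self-adjointness is legitimate; and stability of $\Gamma^\infty$ follows purely from the closure hypotheses (ii)(b) (from $s=B\varphi\in A$ one gets $(\nabla^E_{\hat X}+\div_\nu\hat X)s\in A$, whose projection is again of the form $B(\text{element of }A)$), so that no regularity of $\lb\mapsto B_\lb$ is ever needed.
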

Let us explain how conditions (i) and (ii) are somewhat hidden in our construction. Regarding condition (i), note that in our framework the existence of a natural lift of vector fields on $N$ is guaranteed by the Riemannian structure and it is also crucial in the definition of our connection (see Definition \ref{xjmu} and Theorem \ref{hilejem}). However, we do not need to require that such a lift is integrally complete. This condition, together with (ii) a) and (ii) b), allows us to compute derivatives of functions of the form $F(\lb)=\int_{M_\lb}h^{E}(\varphi,\psi)d\nu$ (see Lemmas 7.3.2 and 7.3.3 in \cite{LS}). Instead, since we are working with compactly supported sections, we were able to apply the divergence theorem to show a similar result (see Theorem \ref{smoothint} and its proof). Finally, condition (ii) c) is meant to guarantee that condition (iii) in Definition \ref{def} is satisfied. Again, this is trivially satisfied by the space of compactly supported sections (see Lemma \ref{compsec}). To summarize, due to the inherent technical difficulties of working within the holomorphic category, Lempert and Sz\"oke found sharp conditions, meant to imply suitable properties, to ensure that certain (abstract) space sections and certain connection define a smooth field of Hilbert spaces. Meanwhile, in our case those properties were obtained from the Riemannian structure, and the required space of sections and the connection were defined explicitly.

\appendix

\section{Weakly smoothness}\label{appa}

The following result from \cite{LS} is applied many times in Section \ref{section2}. We decided to include it for a self-contained presentation. We also take this opportunity to fix a typo in the original article. Essentially, this result asserts that weakly smoothness implies norm smoothness.

\begin{Lemma} \cite[Lemma 5.1.1]{LS} 
Let $N$ be a finite-dimensional manifold and $\Hi$ be a Hilbert space with inner product $\langle\cdot,\cdot\rangle$. Also, let $f\in C^{n-1}(N;\Hi)$, $n\in\mathbb{N}$. If for every $X\in\vect(N)$  there is $f_X\in C^{n-1}(N;\Hi)$ such that 
$$
\langle f,\theta \rangle\in  C^{n}(N)\quad \text{ and }\quad X\langle f,\theta \rangle=\langle f_X,\theta \rangle\,,\quad \theta\in\Hi\,
$$
then $f\in C^{n}(N;\Hi)$ and $Xf=f_X$.
\end{Lemma}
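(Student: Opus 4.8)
The plan is to prove the statement by induction on $n$, after reducing everything to a local computation in a chart. Since smoothness is local and $N$ is finite-dimensional, I would fix a chart and assume $N$ is (an open subset of) $\mathbb{R}^k$, so that it suffices to control the coordinate fields $\partial_1,\dots,\partial_k$. I would also use the standard characterization that a map is $C^n$ precisely when it is $C^1$ and each of its first partial derivatives is $C^{n-1}$; this turns the inductive step into a triviality once the base case $n=1$ is settled. Indeed, for general $n$ the base case, applied with $f\in C^{n-1}(N;\Hi)\subseteq C^0(N;\Hi)$ and $f_{\partial_j}\in C^{n-1}(N;\Hi)\subseteq C^0(N;\Hi)$, already yields that $f$ is $C^1$ with $\partial_j f=f_{\partial_j}$; since each $f_{\partial_j}$ lies in $C^{n-1}(N;\Hi)$ by hypothesis, all first partials of $f$ are $C^{n-1}$, whence $f\in C^n(N;\Hi)$. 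Thus the whole content is the case $n=1$.

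For the base case I would argue one coordinate direction at a time. Fix $p$ and $j$, and consider $t\mapsto\langle f(p+te_j),\theta\rangle$. By hypothesis this is $C^1$ with derivative $\langle f_{\partial_j}(p+te_j),\theta\rangle$, so the scalar fundamental theorem of calculus gives
\[
\langle f(p+te_j)-f(p),\theta\rangle=\int_0^t\langle f_{\partial_j}(p+se_j),\theta\rangle\,ds,\qquad \theta\in\Hi.
\]
Since $f_{\partial_j}$ is continuous, the $\Hi$-valued Riemann integral $I(t):=\int_0^t f_{\partial_j}(p+se_j)\,ds$ exists, and continuity of the inner product lets me pull $\theta$ out, so that $\langle I(t),\theta\rangle$ equals the right-hand side above. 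As this holds for every $\theta$, I conclude the norm identity $f(p+te_j)-f(p)=I(t)$. Differentiating $I$ at $t=0$, again using continuity of $f_{\partial_j}$ so that $t^{-1}\int_0^t f_{\partial_j}(p+se_j)\,ds\to f_{\partial_j}(p)$ in norm, shows that the partial derivative $\partial_j f(p)$ exists in the norm of $\Hi$ and equals $f_{\partial_j}(p)$. Running this over all $p$ and all $j$ produces continuous norm partial derivatives, hence $f\in C^1(N;\Hi)$.

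Finally, to recover the assertion $Xf=f_X$ for an arbitrary $X\in\vect(N)$, I would note that once $f\in C^1$ the derivative $Xf$ exists in norm and, by bilinearity and continuity of the inner product, satisfies $\langle Xf,\theta\rangle=X\langle f,\theta\rangle=\langle f_X,\theta\rangle$ for all $\theta\in\Hi$; since $\theta$ is arbitrary this forces $Xf=f_X$. The only genuinely technical point, and the step I would be most careful about, is the interchange of the inner product with the $\Hi$-valued integral together with the resulting upgrade from the weak identity (tested against every $\theta$) to a norm identity; this is exactly where the continuity hypothesis $f_X\in C^{n-1}(N;\Hi)\subseteq C^0(N;\Hi)$ is used, guaranteeing a bona fide continuous $\Hi$-valued antiderivative rather than merely a weak derivative. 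Everything else is a routine consequence of the scalar fundamental theorem of calculus and the characterization of $C^n$ in terms of $C^1$ regularity together with $C^{n-1}$ first partials.
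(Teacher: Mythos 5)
Your proof is correct. There is, however, nothing in the paper to compare it against: the paper does not prove this lemma, but quotes it verbatim from \cite[Lemma 5.1.1]{LS} in Appendix \ref{appa} precisely so that Section \ref{section2} can invoke it, so your write-up supplies a proof where the paper defers to the reference. The structure you use — reduce to the case $n=1$ via the characterization of $C^n$ as ``$C^1$ with $C^{n-1}$ first partials,'' then upgrade weak differentiability to norm differentiability through the $\Hi$-valued fundamental theorem of calculus — is the standard mechanism for this kind of ``weakly smooth implies norm smooth'' statement, and the key interchanges are all justified: pulling the bounded linear functional $\langle\cdot,\theta\rangle$ (linear, by the paper's convention of antilinearity in the second slot) through the Riemann integral of the continuous function $f_{\partial_j}$, separating vectors of $\Hi$ by testing against all $\theta$, and the norm convergence $t^{-1}\int_0^t f_{\partial_j}(p+se_j)\,ds\to f_{\partial_j}(p)$. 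One point you gloss over: the hypothesis quantifies over global fields $X\in\vect(N)$, while your argument applies it to the coordinate fields $\partial_j$ of a chart, which are only locally defined. To be careful, take a bump function $\chi$ supported in the chart and equal to $1$ near the point of interest, apply the hypothesis to the global field $\chi\partial_j$ (extended by zero), and note that $f_X$ is determined pointwise by the identities $\langle f_X,\theta\rangle=X\langle f,\theta\rangle$, so $f_{\chi\partial_j}$ serves as $f_{\partial_j}$ on the neighborhood where $\chi\equiv 1$. This is routine and does not affect correctness.
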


\section{Coarea Formula}\label{appcoarea}

For the sake of a self-contained presentation of our results, we recall the so-called coarea formula in this appendix. 

Let $M$ and $N$ be oriented Riemannian manifolds of dimension $m$ and $k$ respectively, with $k<m$, and let $\rho:M\to N $ be a submersion. Also, let $\eta$ and $\zeta$ be the corresponding Riemannian volume forms, and
 $J_\rho$ as in Definition \ref{xjmu}.  Then for any measurable function  $f$ on $M$, the coarea formula asserts that 
 \begin{equation}\label{gcoarea}
\int_{M}f(x)J_\rho(x) \eta(x)=\int_{N}\left(\int_{M_\lb}f(z)   \eta_\lb(z)\right) \zeta(\lb),
\end{equation}
 whenever $f$ is nonnegative or $f J_\rho\in L^1(M)$. 

Since $J_{\rho}(x)=[\det(D\rho(x)D\rho(x)^*)]^{1/2}$, the function $J_\rho$ is well-defined even if $\rho$ is a smooth map. Moreover, the Morse-Sard Theorem implies that the coarea formula holds even if $\rho$ is not a submersion. Note that in such a case the set of regular points forms an open set in $M$.
\begin{Remark}
{\rm
For $M=\R^m$, coarea formula can be found in \cite{Fe} Theorem 3.2.12. It is stated using the $(n-k)$-Hausdorff measure restricted to $M_\lb$, but it is well known that it coincides with $\eta_\lb$ in our case.  The result for Riemannian manifolds follows from the case $M=\R^n$. 
}
\end{Remark}


Recall that, we introduce in Definition \ref{xjmu}  the volume form  $\mu_\lb=J^{-1}\eta_\lb$. Then, the coarea formula becomes the identity 
$$
\int_{M}f(x) \eta(x)=\int_{N}\left(\int_{M_\lb}f(z)   \mu_\lb(z)\right) \zeta(\lb).
$$
In particular, it follows that the map $T:L^2(M)\to\int_N^\oplus L^2(M_\lb,\mu_\lb)\zeta(\lb)$ given by $Tf(\lb)=f|_{M_\lb}$ is unitary.

\end{document}